\newcommand{\w}{\omega}
\newcommand{\Ra}{\Rightarrow}
\newcommand{\F}{\mathcal F}
\newcommand{\U}{\mathcal U}
\newcommand{\V}{\mathcal V}
\newcommand{\W}{\mathcal W}
\newcommand{\Tau}{\mathsf T}
\newcommand{\pr}{\mathrm{pr}}
\newcommand{\cl}{\mathrm{cl}}
\newcommand{\Wallman}{\mathsf W}
\newcommand{\Wk}{\mathsf W_{\!\bar \kappa}}
\newcommand{\Wkstar}{\mathsf W^\star_{\!\beta \kappa}}
\newcommand{\WM}{\Wallman_{\!M}}
\newcommand{\WMstar}{\Wallman^\sharp_{\!M}}
\newcommand{\HM}{\mathsf H_{\!\!\;M}}
\newcommand{\UM}{\mathsf U_{\!\!\;M}}
\newcommand{\Lawman}{\beta^\circ\!}
\newcommand{\OF}{\varphi^\circ}
\newcommand{\Hk}{\mathsf H_\kappa}
\newtheorem{theorem}{Theorem}[section]
\newtheorem{problem}[theorem]{Problem}
\newtheorem{proposition}[theorem]{Proposition}
\newtheorem{corollary}[theorem]{Corollary}
\newtheorem{lemma}[theorem]{Lemma}
\newtheorem{claim}[theorem]{Claim}
\theoremstyle{definition}
\newtheorem{example}[theorem]{Example}
\newtheorem{remark}[theorem]{Remark}
\title[On $\kappa$-bounded and $M$-compact reflections of topological spaces]{On  $\kappa$-bounded and $M$-compact reflections\\ of topological spaces}
\author{Taras Banakh}
\address{T.Banakh: Ivan Franko National University of Lviv (Ukraine) and Jan Kochanowski University in Kielce (Poland)}
\email{t.o.banakh@gmail.com}
\subjclass{54D30, 54D35, 54D80, 54B30}
\begin{document}
\begin{abstract} For a topological space $X$ its reflection in a class $\Tau$ of topological spaces is a pair $(\Tau X,i_X)$ consisting of a space $\Tau X\in\Tau$ and continuous map $i_X:X\to \Tau X$ such that for any continuous map $f:X\to Y$ to a space $Y\in\Tau$ there exists a unique continuous map $\bar f:\Tau X\to Y$ such that $f=\bar f\circ i_X$. In this paper for an infinite cardinal $\kappa$ and a nonempty set $M$ of ultrafilters on $\kappa$, we study the reflections of topological spaces in the classes $\mathsf H_\kappa$ of $\kappa$-bounded Hausdorff spaces and $\HM$ of $M$-compact Hausdorff spaces (a topological space $X$ is $\kappa$-bounded if the closures of subsets of cardinality $\le\kappa$ in $X$ are compact; $X$ is {\em $M$-compact} if any function $x:\kappa\to X$  has a $p$-limit in $M$ for every ultrafilter $p\in M$).
\end{abstract}
\maketitle

\section{Introduction}

In this paper we shall describe the structure of reflections of topological spaces in some classes of Hausdorff compact-like spaces.

By a {\em reflection} of a topological space $X$ in a class $\Tau$ of topological spaces we understand a pair $(\Tau X,i_X)$ consisting of a space $\Tau X\in\Tau$ and a continuous map $i_X:X\to \Tau X$ such that for any continuous map $f:X\to Y$ to a topological space $Y\in\Tau$ there exists a unique continuous map $\bar f:\Tau X\to Y$ such that $f=\bar f\circ i_X$. The pair $(\Tau X,i_X)$ is called a {\em $\Tau$-reflection} of $X$.

The reflection $(\beta X,i_X)$ of a topological space $X$ in the class $\beta$ of compact Hausdorff spaces is known in General Topology \cite[\S3.6]{Eng} as the {\em Stone-\v Cech compactification} of $X$. It is well-known \cite[3.6.23]{Eng} that for a normal topological space $X$ its Stone-\v Cech compactification $\beta X$ can be identified with the Wallman compactification $\Wallman X$, which consists of ultrafilters of closed subsets in $X$.

The compactness is an important topological property, which have many (important) weakenings, see \cite{JSS},  \cite{GFK}, \cite{GR}, \cite{JV}. Let us recall some of them.

Let $\kappa$ be an infinite cardinal endowed with the discrete topology and $M\subset \beta\kappa$ be a nonempty set of ultrafilters on $\kappa$. 

A topological space $X$ is defined to be 
\begin{itemize}
\item {\em $\kappa$-bounded} if the closure $\bar A$ of any set $A\subset X$ of cardinality $|A|\le\kappa$ in $X$ is compact;
\item {\em $M$-compact} if for any ultrafilter $p\in M$ and function $x:\kappa\to X$ there exists a point $\bar x\in X$ such that $x^{-1}(U)\in p$ for any neighborhood $U\subset X$ of $\bar x$.
\end{itemize}
It is easy to see that 
\begin{itemize}
\item[\textup{(i)}] each compact space is $\kappa$-bounded;
\item[\textup{(ii)}] each $\kappa$-compact space is $M$-compact for any subset $M\subset\beta\kappa$;
\item[\textup{(iii)}] a topological space $X$ is compact if and only if it is $\beta|X|$-compact.
\end{itemize}

In this paper we shall reveal the structure of  reflections of topological spaces in the classes $\Hk$ of Hausdorff $\kappa$-compact spaces and $\HM$ of Hausdorff $M$-compact spaces. In Theorems~\ref{t:main-k} and \ref{t:main-M} we show that for a $\overline{\kappa}$-normal space $X$ (containing no long $\kappa$-butterflies) its $\Hk$-reflection (resp. $\HM$-reflection) can be realized as a subspace of the Wallman extension $\Wallman X$, endowed with a suitable topology. Theorems~\ref{t:main-k} and \ref{t:main-M} are proved in Sections~\ref{s:main-k} and \ref{s:main-M} after the necessary preliminary work made in Sections~\ref{s:Kak}--\ref{s:WM}.
 In particular, in Section~\ref{s:Kak}, using the classical approach of Kakutani \cite{Kak}, we prove Theorem~\ref{t:Kak} on the existence and uniqueness of reflections in productive closed-hereditary topological classes of Hausdorff topological spaces. In Section~\ref{s:Wallman} we recall the necessary information on the Wallman extensions; Section~\ref{s:kr+kn} is devoted to $\overline{\kappa}$-regular and $\overline{\kappa}$-normal spaces, introduced in \cite{BBR}. In Section~\ref{s:kb-bkc} we study the relations between $\kappa$-bounded and $\beta\kappa$-compact spaces. In Section~\ref{s:M-closed} we introduce $M$-closed sets and study their properties and their relation to $M$-compact spaces. In Section~\ref{s:M-continuous} we introduce $M$-continuous maps and study their continuity properties. In Section~\ref{s:WM} we introduce the Wallman $M$-compact extension $\WM X$ of a $T_1$-space $X$ and in Theorem~\ref{t:ex1} establish an important extension property of $\WM X$, which will be expoited in the proofs of the main Theorems~\ref{t:main-k} and \ref{t:main-M}  in Sections~\ref{s:main-k} and \ref{s:main-M}.

\section{The Hausdorff $\kappa$-bounded reflection of a topological space}\label{s:Kak}

A class $\Tau$ of topological spaces is called
\begin{itemize}
\item {\em topological} if for any topological space $X\in\Tau$ and any homeomorphism $h:X\to Y$ the topological space $Y$ belongs to the class $\Tau$;
\item {\em closed-hereditary} if for any topological space $X\in\Tau$, all closed subspaces of $X$ belong to the class $\Tau$;
\item {\em productive} if for any family $(X_\alpha)_{\alpha\in A}$ of topological spaces in the class $\Tau$ their Tychonoff product $\prod_{\alpha\in A}X_\alpha$ belongs to the class $\Tau$.
\end{itemize}

The following theorem should be known but we could not find the precise formulation in the literature.

\begin{theorem}\label{t:Kak} Let $\Tau$ be a productive closed-hereditary topological class of Hausdorff topological spaces. Every  topological space $X$ has a $\Tau$-reflection $(\Tau X,i_X)$. Moreover, the $\Tau$-reflection is unique in the sense that for any other $\Tau$-reflection $(\Tau'\!X,i'_X)$ of $X$ there exists a unique homeomorphism $h:\Tau X\to \Tau'\!X$ such that $h\circ i_X=i'_X$.
\end{theorem}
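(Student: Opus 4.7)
\emph{Proof plan.} My approach is the classical Kakutani construction: embed $X$ diagonally into a product of all ``sufficiently small'' $\Tau$-targets receiving a continuous map from $X$, and take the closure of the image. The only genuine difficulty is the set-theoretic one of turning the would-be proper class of targets into a set. For this I would invoke the classical theorem of Posp\'\i\v{s}il that any Hausdorff space $Y$ satisfies $|Y|\le 2^{2^{d(Y)}}$, where $d(Y)$ is the density of $Y$ (every point of $Y$ is the limit of some ultrafilter on a dense set of size $d(Y)$, and in a Hausdorff space no ultrafilter converges to two distinct points). Put $\lambda:=2^{2^{|X|}}$ and fix a set $\mathcal{S}$ of representatives, up to homeomorphism, of all spaces in $\Tau$ whose underlying set is a subset of $\lambda$. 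Let $\mathcal{F}$ be the set of all continuous maps $f\colon X\to Y_f$ with $Y_f\in\mathcal{S}$.

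Next I would form the Tychonoff product $P:=\prod_{f\in\mathcal{F}}Y_f$ together with the diagonal map $\Delta\colon X\to P$, $x\mapsto(f(x))_{f\in\mathcal{F}}$, which is continuous because each $\pi_f\circ\Delta=f$ is. By productivity of $\Tau$ one has $P\in\Tau$, so in particular $P$ is Hausdorff. Define $\Tau X:=\overline{\Delta(X)}\subset P$ and let $i_X\colon X\to\Tau X$ be the corestriction of $\Delta$. Since $\Tau X$ is a closed subspace of $P\in\Tau$, the closed-hereditary hypothesis yields $\Tau X\in\Tau$.

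To verify the universal property, let $g\colon X\to Z$ be continuous with $Z\in\Tau$, and put $Z':=\overline{g(X)}\subset Z$. As a closed subspace of $Z$, $Z'$ belongs to $\Tau$; being Hausdorff of density at most $|X|$, it has $|Z'|\le\lambda$, so there is a homeomorphism $\varphi\colon Z'\to Y$ onto some $Y\in\mathcal{S}$ (this step uses that $\Tau$ is a topological class). The composition $f:=\varphi\circ g\colon X\to Y$ is an element of $\mathcal{F}$ with $Y_f=Y$, so the projection $\pi_f\colon P\to Y$ satisfies $\pi_f\circ i_X=f=\varphi\circ g$. Hence the map $\bar g:=\iota\circ\varphi^{-1}\circ(\pi_f|_{\Tau X})\colon\Tau X\to Z$, where $\iota\colon Z'\hookrightarrow Z$ is the inclusion, is continuous with $\bar g\circ i_X=g$. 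Uniqueness of $\bar g$ is immediate, because $i_X(X)$ is dense in $\Tau X$ and $Z$ is Hausdorff, so any two extensions must coincide. Finally, the uniqueness of the reflection up to a unique homeomorphism is the standard two-sided application of the universal property: given another reflection $(\Tau'\!X,i'_X)$, one obtains mutually inverse continuous maps $h\colon\Tau X\to\Tau'\!X$ and $h'\colon\Tau'\!X\to\Tau X$ with $h\circ i_X=i'_X$ and $h'\circ i'_X=i_X$; both $h'\circ h$ and $\mathrm{id}_{\Tau X}$ are extensions of $i_X$ across $i_X$, so they agree by the uniqueness clause, and symmetrically $h\circ h'=\mathrm{id}_{\Tau'\!X}$. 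The principal obstacle in the whole argument is thus concentrated in the cardinal-arithmetic first step; once $\mathcal{S}$ is in place, the rest is routine.
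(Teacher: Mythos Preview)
Your proof is correct and follows essentially the same Kakutani construction as the paper: both bound the relevant targets by the cardinal $2^{2^{|X|}}$ (the paper citing \cite[1.5.3]{Eng}, you citing Posp\'\i\v{s}il), form the diagonal into a product of all small $\Tau$-targets, and take the closure. The only cosmetic difference is that the paper indexes the product by pairs (cardinal~$\le\mu$, topology on it) rather than by a chosen set of representatives, but this is the same idea.
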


\begin{proof} In the proof we follow the classical idea of Kakutani \cite{Kak}. If $X=\emptyset$, then put $\Tau X=\emptyset$ and $i_X:X\to \Tau X$ be the unique map between the empty sets. So, assume that $X$ is not empty.

Consider the cardinal $\mu=2^{2^{|X|}}$. For any non-zero cardinal $\lambda\le\mu$ let $\mathcal T_\lambda$ be the family of all possible Hausdorff topologies on $\lambda$ such that the topological space $\lambda_\tau:=(\lambda,\tau)$ belongs to the class $\Tau$. It is clear that $|\mathcal T_\lambda|\le 2^{2^\lambda}$. For every topology $\tau\in\mathcal  T_\lambda$ let $\mathcal F_\tau$ be the family of all continuous functions from $X$ to $(\lambda,\tau)$. Now consider the Tychonoff product $\Pi:=\prod_{0<\lambda\le\mu}\prod_{\tau\in\mathcal T_\lambda}\lambda_\tau^{\F_\tau}$ and the diagonal map
$$i_X:X\to\Pi,\;\;i_X:x\mapsto \big(\big((f(x))_{f\in\F_\tau}\big)_{\tau\in\mathcal T_\lambda}\big)_{0<\lambda\le\mu}.$$
Let $\Tau X$ be the closure of $i_X(X)$ in the space $\Pi$. Since the class $\Tau$ is productive and closed-hereditary, the closed subspace $\Tau X$ of the Tychonoff product $\Pi$ of the spaces $\lambda_\tau\in\Tau$ belongs to the class $\Tau$.

It remains to show that the pair $(\Tau X,i_X)$ is a $\Tau$-reflection of $X$. Given any continuous map $g:X\to Y$ to a space $Y\in\Tau$, we need to find a unique continuous map $\bar g:\Tau X\to Y$ such that $g=\bar g\circ i_X$. The uniqueness of $\bar g$ follows from the density of $i_X(X)$ in $\Tau X$ and the Hausdorffness of $Y$. To show that the map $\bar g$ exists, consider the closure $\overline{g(X)}$ of $g(X)$ in $Y$. By \cite[1.5.3]{Eng}, $|\overline{g(X)}|\le 2^{2^{|g(X)|}}\le 2^{2^{|X|}}=\mu$.  Put $\lambda=|\overline{g(X)}|$ and take any bijection $h:\overline{g(X)}\to \lambda$. Endow $\lambda$ with the topology $\tau=\{h(U):U\subset \overline{g(X)}$ is open$\}$ and observe that $h:\overline{g(X)}\to(\lambda,\tau)$ is a homeomorphism. Since the class $\Tau$ is closed-hereditary and topological, the space $\lambda_\tau:=(\lambda,\tau)$ belongs to the class $\Tau$ and the topology $\tau$ to the family $\mathcal T_\lambda$. Consider the continuous map $f=h\circ g:X\to\lambda_\tau$ and let $\pr_f:\Pi\to \lambda_\tau$ be the coordinate projection. Then $\bar g:=h^{-1}\circ \pr_f{\restriction}{\Tau X}:\Tau X\to \overline{g(X)}\subset Y$ is a required continuous map such that $\bar g\circ i_X=h^{-1}\circ\pr_f\circ i_X=h^{-1}\circ f=h^{-1}\circ h\circ g=g$.
\smallskip

The uniqueness of the $\Tau$-reflection follows from the definition.
\end{proof}

\section{The Wallman extension of a $T_1$-space}\label{s:Wallman}

Theorem~\ref{t:Kak} is rather non-constructive and says nothing about the structure of  $\Tau$-reflections. In some cases, fortunately, there exist more informative ways of defining $\Tau$-reflections. Such cases include reflections of (normal) topological spaces in the class of compact Hausdorff spaces, which can be realized with the help of the  Wallman extension $\Wallman X$ of a $T_1$-space $X$. By a {\em $T_1$-space} we understand a topological space in which all finite subsets are closed.

We recall \cite[\S3.6]{Eng} that the {\em Wallman extension} $\Wallman X$ of a topological space $X$ consists of {\em closed ultrafilters}, i.e., families $\F$ of closed subsets of $X$ satisfying the following conditions:
\begin{itemize}
\item $\emptyset\notin\F$;
\item $A\cap B\in\F$ for any $A,B\in\F$;
\item a closed set $F\subset X$ belongs to $\F$ if $F\cap A\ne\emptyset$ for every $A\in\F$.
\end{itemize}
The Wallman extension $\Wallman X$ of $X$ carries the topology generated by the base consisting of the sets
$$\langle U\rangle=\{\F\in \Wallman X:\exists F\in\F\;\;(F\subset U)\}$$ where $U$ runs over open subsets of $X$.


The proof of Theorem~\cite[3.6.21]{Eng} yields the following important fact.

\begin{proposition}\label{p:W-comp} The Wallman extension $\Wallman X$ of any topological space $X$ is compact.
\end{proposition}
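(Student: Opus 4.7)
The plan is to verify compactness by checking the finite subcover property on arbitrary covers by basic open sets $\langle U\rangle$, $U$ open in $X$. The key preliminary computation is the identity
\[
\Wallman X\setminus\langle U\rangle=\{\F\in\Wallman X:X\setminus U\in\F\}.
\]
Indeed, $\F\notin\langle U\rangle$ says no $F\in\F$ is contained in $U$, i.e.\ every $F\in\F$ meets the closed set $X\setminus U$; by the third axiom of a closed ultrafilter this forces $X\setminus U\in\F$. Conversely, if $X\setminus U\in\F$ and some $F\in\F$ were contained in $U$, then $F\cap(X\setminus U)=\emptyset$ would violate the first two axioms.

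With this in hand, I would take an arbitrary basic open cover $\{\langle U_\alpha\rangle\}_{\alpha\in A}$ of $\Wallman X$ and suppose toward a contradiction that no finite subfamily covers. For each finite $A_0\subseteq A$ there is $\F_{A_0}\in\Wallman X$ lying outside every $\langle U_\alpha\rangle$ with $\alpha\in A_0$; the displayed identity gives $\{X\setminus U_\alpha:\alpha\in A_0\}\subseteq\F_{A_0}$, and because closed ultrafilters are closed under finite intersection and avoid $\emptyset$, the intersection $\bigcap_{\alpha\in A_0}(X\setminus U_\alpha)$ is nonempty. Hence the family $\mathcal G:=\{X\setminus U_\alpha:\alpha\in A\}$ of closed subsets of $X$ has the finite intersection property.

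By Zorn's lemma extend $\mathcal G$ to a maximal family $\F$ of closed subsets of $X$ with the finite intersection property. Maximality forces the three axioms defining a closed ultrafilter: $\emptyset\notin\F$ by FIP; if $A,B\in\F$ then $\F\cup\{A\cap B\}$ still has FIP, so $A\cap B\in\F$; and if a closed $F\subseteq X$ meets every member of $\F$ then, using that $\F$ is closed under finite intersection, $\F\cup\{F\}$ has FIP and so $F\in\F$. Thus $\F\in\Wallman X$, yet $X\setminus U_\alpha\in\F$ for every $\alpha$, whence $\F\notin\langle U_\alpha\rangle$ for any $\alpha$, contradicting that $\{\langle U_\alpha\rangle\}_{\alpha\in A}$ is a cover.

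The main point deserving care is the complement identity for the basic open sets, since it is precisely where the third (saturation) axiom of a closed ultrafilter enters: without it the final contradiction would only produce an ordinary closed filter rather than a point of $\Wallman X$.
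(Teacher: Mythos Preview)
Your argument is correct and is precisely the standard proof of compactness of the Wallman extension: reduce to basic covers, translate the failure of a finite subcover into the finite intersection property of the closed complements $X\setminus U_\alpha$, and extend to a closed ultrafilter via Zorn. The paper does not give its own proof here but simply refers to the proof of Theorem~3.6.21 in Engelking, which is exactly this argument; so your write-up matches the intended route.

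One minor remark: you might explicitly note (one sentence suffices) why it is enough to test covers by basic open sets $\langle U\rangle$ rather than arbitrary open covers---namely, any open cover refines to a basic one, and a finite subcover of the refinement yields a finite subcover of the original. You clearly know this, but stating it closes the only implicit step in the proof.
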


A topological space $X$ is {\em normal} if for any disjoint closed sets $A,B\subset X$ there are disjoint open sets $U,V\subset X$ such that $A\subset U$ and $B\subset V$. The following characterization can be found in \cite[3.6.22]{Eng}.

\begin{proposition}\label{p:W-normal} A $T_1$-space $X$ is normal if and only if its Wallman extension $\Wallman X$ is Hausdorff.
\end{proposition}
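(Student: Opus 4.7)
The plan is to prove the two directions by separate but complementary strategies. For the direction ``$X$ normal $\Rightarrow$ $\Wallman X$ Hausdorff'', I would take distinct $\F,\mathcal G\in\Wallman X$ and, after relabeling if necessary, pick $F\in\F\setminus\mathcal G$. The third defining clause of a closed ultrafilter, applied contrapositively to $\mathcal G$, yields some $G\in\mathcal G$ disjoint from $F$, since otherwise $F$ would meet every element of $\mathcal G$ and hence belong to $\mathcal G$. Normality of $X$ then provides disjoint open neighborhoods $U\supset F$ and $V\supset G$, and a short check shows that the basic opens $\langle U\rangle$ and $\langle V\rangle$ are disjoint: any $\mathcal H$ in the intersection would contain $A\subset U$ and $B\subset V$, but then $A\cap B\in\mathcal H$ would be empty, contradicting the first axiom.

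For the converse I would, instead of repeating a direct separation, exploit Proposition~\ref{p:W-comp}: $\Wallman X$ is compact, hence under the Hausdorff assumption normal. The key observation is that for each closed $C\subset X$ the set $\hat C:=\{\F\in\Wallman X:C\in\F\}$ coincides with $\Wallman X\setminus\langle X\setminus C\rangle$, because the third ultrafilter clause says precisely that $C\in\F$ iff $C$ meets every member of $\F$; in particular $\hat C$ is closed in $\Wallman X$. If $A,B\subset X$ are disjoint closed sets, then $\hat A\cap \hat B=\emptyset$ (otherwise some $\F$ would contain $A\cap B=\emptyset$), so normality of $\Wallman X$ supplies disjoint open $\mathcal U\supset \hat A$ and $\mathcal V\supset \hat B$.

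To descend back to $X$, I would use the Wallman embedding $i:X\to \Wallman X$, $x\mapsto\F_x:=\{F\subset X\text{ closed}:x\in F\}$. Here the $T_1$ hypothesis carries the weight: since $\{x\}$ is closed, it lies in $\F_x$, which makes $\F_x$ a closed ultrafilter and forces $i^{-1}(\langle U\rangle)=U$ for every open $U\subset X$, so $i$ is continuous and $i^{-1}(\hat C)=C$ for every closed $C$. Then $U:=i^{-1}(\mathcal U)$ and $V:=i^{-1}(\mathcal V)$ are disjoint open sets in $X$ containing $A$ and $B$ respectively, witnessing normality. The main technical nuisance I expect is precisely this interplay between the closed ultrafilter axioms and the basic opens $\langle U\rangle$, in particular the identity $\Wallman X\setminus\langle X\setminus C\rangle=\hat C$, on which the backward direction pivots; everything else reduces to carefully applying the three defining clauses of a closed ultrafilter.
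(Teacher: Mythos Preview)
Your argument is correct in both directions. The paper does not supply its own proof of this proposition; it simply cites \cite[3.6.22]{Eng}, and your write-up is essentially the standard argument found there: separate distinct closed ultrafilters via disjoint closed sets and normality for the forward direction, and for the converse use that $\Wallman X$ is compact (Proposition~\ref{p:W-comp}) hence normal when Hausdorff, together with the identification $\hat C=\Wallman X\setminus\langle X\setminus C\rangle$ and the continuity of $j_X$.
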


If $X$ is a $T_1$-space, then we can consider the map $j_X:X\to \Wallman X$ assigning to each point $x\in X$ the principal closed ultrafilter consisting of all closed sets $F\subset X$ containing the point $x$. It is easy to see that the image $j_X(X)$ is dense in $\Wallman X$. By \cite[3.6.21]{Eng}, the map $j_X:X\to \Wallman X$ is a topological embedding. So, $X$ can be identified with the subspace $j_X(X)$ of $\Wallman X$.

The Wallman extension has the following extension property, proved in Theorem 3.6.21 in \cite{Eng}.

\begin{proposition}\label{p:W-ext} For any continuous map $f:X\to Y$ from a $T_1$-space $X$ to a compact Hausdorff space $Y$ there exists a unique continuous map $\bar f:\Wallman X\to Y$ such that $f=\bar f\circ j_X$.
\end{proposition}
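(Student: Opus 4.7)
The plan is to define $\bar f:\Wallman X\to Y$ by sending $\F\in\Wallman X$ to the unique point of the intersection $\bigcap_{F\in\F}\overline{f(F)}\subset Y$, and then to verify that this works. For \emph{existence} of such a point, observe that the family $\{\overline{f(F)}:F\in\F\}$ has the finite intersection property, since $\F$ is closed under binary intersection and $\emptyset\notin\F$; compactness of $Y$ then yields a point in the intersection.

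For \emph{uniqueness} of the intersection point, the key ingredient is a prime-like property of closed ultrafilters that I would extract from the three defining axioms: if $A,B\subset X$ are closed and $A\cup B\in\F$, then $A\in\F$ or $B\in\F$. Indeed, if neither belongs to $\F$, the contrapositive of the closure axiom yields $F_A,F_B\in\F$ with $F_A\cap A=\emptyset=F_B\cap B$, whence $F_A\cap F_B\in\F$ is disjoint from $A\cup B\in\F$, contradicting stability under intersections together with $\emptyset\notin\F$. Now suppose distinct $y_1,y_2$ both lie in $\bigcap_{F\in\F}\overline{f(F)}$; choose disjoint open neighborhoods $U_1,U_2\subset Y$. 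Since $X\in\F$ (as $X$ meets every nonempty set in $\F$) and $X=f^{-1}(Y\setminus U_1)\cup f^{-1}(Y\setminus U_2)$, the prime property forces, say, $f^{-1}(Y\setminus U_1)\in\F$; but then the corresponding member $\overline{f(f^{-1}(Y\setminus U_1))}\subset Y\setminus U_1$ of the intersection misses $y_1$, a contradiction.

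It remains to check $\bar f\circ j_X=f$, continuity, and uniqueness of $\bar f$. For the first, when $\F=j_X(x)$ every $F\in\F$ contains $x$, so $f(x)$ lies in every $\overline{f(F)}$ and hence equals $\bar f(j_X(x))$ by uniqueness. For continuity at $\F$, given an open neighborhood $V$ of $\bar f(\F)$ in $Y$, I would use regularity of the compact Hausdorff space $Y$ to shrink to an open $V'$ with $\bar f(\F)\in V'\subset \overline{V'}\subset V$. A repetition of the uniqueness argument shows $f^{-1}(Y\setminus V')\notin\F$ (otherwise $\bar f(\F)\in\overline{f(f^{-1}(Y\setminus V'))}\subset Y\setminus V'$), so by the closure axiom some $F\in\F$ misses $f^{-1}(Y\setminus V')$, giving $F\subset f^{-1}(V')$ and $\F\in\langle f^{-1}(V')\rangle$. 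For any $\mathcal G\in\langle f^{-1}(V')\rangle$, picking $G\in\mathcal G$ with $G\subset f^{-1}(V')$ gives $\bar f(\mathcal G)\in\overline{f(G)}\subset\overline{V'}\subset V$, so $\bar f(\langle f^{-1}(V')\rangle)\subset V$. Finally, uniqueness of $\bar f$ follows from the density of $j_X(X)$ in $\Wallman X$ together with Hausdorffness of $Y$.

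The main obstacle I anticipate is the uniqueness of the intersection point: it is precisely here that one must exploit the structural difference between closed ultrafilters and arbitrary closed filters, by extracting the prime property from the axioms. Once this is in hand, everything else (the extension identity, continuity via regularity, and uniqueness of $\bar f$) is routine.
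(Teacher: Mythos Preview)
Your argument is correct and is precisely the classical proof of the extension property of the Wallman compactification: define $\bar f(\F)$ as the unique point of $\bigcap_{F\in\F}\overline{f(F)}$, use compactness for existence, the prime property of closed ultrafilters together with the Hausdorff axiom for uniqueness, and regularity of $Y$ for continuity. All steps are sound, including your derivation of the prime property from the three ultrafilter axioms.

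Note, however, that the paper does not supply its own proof of this proposition: it simply cites Theorem~3.6.21 in Engelking's \emph{General Topology}. Your write-up is essentially the argument one finds there, so there is no genuine difference in approach to compare---you have filled in what the paper chose to delegate to a reference.
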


Propositions~\ref{p:W-comp}, \ref{p:W-normal} and \ref{p:W-ext} imply the following known fact \cite[3.6.23]{Eng}.

\begin{corollary} For any normal $T_1$-space $X$ its Wallman extension $(\Wallman X,j_X)$ is a reflection of $X$ in the class  of compact Hausdorff spaces.
\end{corollary}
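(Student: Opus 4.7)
The plan is essentially to assemble the three propositions just cited, since each matches one of the requirements in the definition of a reflection. First I would note that the definition of a $\Tau$-reflection in a class $\Tau$ requires three things: (a) the target object $\Tau X$ lies in $\Tau$; (b) the canonical map $i_X\colon X\to\Tau X$ is continuous; and (c) for every continuous $f\colon X\to Y$ with $Y\in\Tau$ there is a unique continuous extension $\bar f\colon\Tau X\to Y$ with $f=\bar f\circ i_X$. In our setting $\Tau$ is the class of compact Hausdorff spaces, $\Tau X=\Wallman X$, and $i_X=j_X$.

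For (a), Proposition~\ref{p:W-comp} gives compactness of $\Wallman X$ for \emph{any} topological space, and since $X$ is a normal $T_1$-space, Proposition~\ref{p:W-normal} supplies the Hausdorffness of $\Wallman X$; together these say $\Wallman X$ belongs to the class of compact Hausdorff spaces. For (b), the map $j_X\colon X\to\Wallman X$ is continuous because it was already noted (from \cite[3.6.21]{Eng}) to be a topological embedding. For (c), Proposition~\ref{p:W-ext} supplies, for any continuous $f\colon X\to Y$ into a compact Hausdorff $Y$, a unique continuous $\bar f\colon\Wallman X\to Y$ with $f=\bar f\circ j_X$, which is exactly the universal property required.

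There is no real obstacle here; the proof is a one-line bookkeeping check. The only thing worth being careful about is to make sure the hypotheses of each cited proposition are met: Proposition~\ref{p:W-normal} requires the $T_1$ hypothesis (satisfied), and Proposition~\ref{p:W-ext} requires both that $X$ is $T_1$ and that the target is compact Hausdorff (both hold). So the write-up will consist of a sentence identifying $(\Wallman X,j_X)$ as a candidate reflection, followed by a line each citing Propositions~\ref{p:W-comp}, \ref{p:W-normal}, and \ref{p:W-ext} to verify (a), (b)+(a), and (c) respectively.
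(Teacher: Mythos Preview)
Your proposal is correct and matches the paper's approach exactly: the paper does not give a separate proof but simply states that the corollary follows from Propositions~\ref{p:W-comp}, \ref{p:W-normal}, and \ref{p:W-ext}, which is precisely the assembly you carry out.
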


Therefore, for a normal $T_1$-space $X$ its  Stone-\v Cech compactification $(\beta X,i_X)$  can be identified with its Wallman extension $(\Wallman X,j_X)$. 


\section{$\overline{\kappa}$-Urysohn, $\overline{\kappa}$-regular and $\overline{\kappa}$-normal topological spaces}\label{s:kr+kn}

We recall that a topological space $X$ is {\em Urysohn} if any distinct points in $X$ have disjoint closed neighborhoods. 

Given an infinite cardinal $\kappa$, we define a topological space $X$ to be
\begin{itemize}
\item {\em $\overline{\kappa}$-Urysohn} if for any subset $C\subset X$ of cardinality $\le\kappa$ and distinct points $x,y\in X$ there are two disjoint open sets $V,W$ in $X$ such that $x\in V$, $y\in W$ and $\overline{V\cap C}\cap \overline{W\cap C}=\emptyset$;
\item {\em $\overline{\kappa}$-regular} if for any subset $C\subset X$ of cardinality $|C|\le\kappa$, any closed subset $F\subset\bar C$ and point $x\in X\setminus F$ there are disjoint open sets $V,W\subset X$ such that $x\in V$ and $F\subset W$;
\item {\em $\overline{\kappa}$-normal} if for any subset $C\subset X$ of cardinality $|C|\le\kappa$ and disjoint closed sets $A,B\subset\bar C$ there are disjoint open sets $V,W\subset X$ such that $A\subset V$ and $B\subset W$.
 \end{itemize}
 
It is easy to see that each $\overline{\kappa}$-Urysohn space is Hausdorff. 
 
\begin{lemma}\label{l:Ur} Each $\overline{\kappa}$-regular $T_1$-space $X$ is $\overline{\kappa}$-Urysohn.
\end{lemma}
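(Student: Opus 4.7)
The plan is to adapt the classical proof that a regular $T_1$-space is Urysohn: use the $T_1$ axiom to turn singletons into closed sets, apply regularity once to separate $x$ and $y$ by disjoint open sets, and then ``shrink'' each of those open sets via a second invocation of regularity so that the closure of each shrunken neighbourhood lies inside the original. The mismatch with $\overline{\kappa}$-regularity is that the regularity axiom is only available when the closed set $F$ sits inside $\overline{C}$ for some $C$ of cardinality $\le\kappa$. To absorb this, I would replace $C$ throughout by $C':=C\cup\{x,y\}$, whose cardinality is still at most $\kappa$ since $\kappa$ is infinite. Now both $\{x\}$ and $\{y\}$ are closed subsets of $\overline{C'}$ (using $T_1$), and any set of the form $\overline{C'}\setminus U$ with $U$ open is a closed subset of $\overline{C'}$, making such sets legitimate inputs to $\overline{\kappa}$-regularity.

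Concretely, I proceed in three steps. First, apply $\overline{\kappa}$-regularity to $C'$, the closed set $\{y\}\subset\overline{C'}$, and the point $x$ to obtain disjoint open neighbourhoods $U_x\ni x$ and $U_y\ni y$. Second, apply $\overline{\kappa}$-regularity to $C'$, the closed set $F_x:=\overline{C'}\setminus U_x$, and the point $x$ to obtain disjoint open sets $V_0\ni x$ and $W_x\supset F_x$; since $V_0\cap W_x=\emptyset$ forces $\overline{V_0}\cap W_x=\emptyset$, one gets $\overline{V_0}\cap\overline{C'}\subset U_x$, and therefore $\overline{V_0\cap C}\subset\overline{V_0}\cap\overline{C}\subset U_x$. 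Repeat symmetrically at $y$ to obtain an open $W_0\ni y$ with $\overline{W_0\cap C}\subset U_y$.

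Finally, set $V:=V_0\cap U_x$ and $W:=W_0\cap U_y$. These are open, contain $x$ and $y$ respectively, and are disjoint because $U_x\cap U_y=\emptyset$. Moreover $\overline{V\cap C}\subset\overline{V_0\cap C}\subset U_x$ and $\overline{W\cap C}\subset U_y$, so $\overline{V\cap C}\cap\overline{W\cap C}=\emptyset$, which is precisely the $\overline{\kappa}$-Urysohn condition.

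The main obstacle is identifying the correct closed sets to feed into $\overline{\kappa}$-regularity: the classical choice $X\setminus U_x$ need not lie inside $\overline{C'}$, so a blind mimicry of the Urysohn-from-regularity argument fails. Replacing it by $F_x=\overline{C'}\setminus U_x$ and enlarging $C$ to $C'=C\cup\{x,y\}$ are the two non-routine moves that make the scheme work; everything else is formal manipulation of closures and intersections.
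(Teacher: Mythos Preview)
Your proof is correct. Both your argument and the paper's enlarge $C$ to absorb the distinguished points and then feed carefully chosen closed subsets of $\overline{C'}$ into $\overline{\kappa}$-regularity, so the underlying idea is the same. The difference is one of economy: the paper first observes that $\overline{\kappa}$-regularity plus $T_1$ gives Hausdorffness, obtains disjoint open $U\ni x$, $W\ni y$, and then makes a \emph{single} further application of $\overline{\kappa}$-regularity to separate $x$ from the closed set $F=\{y\}\cup\overline{W\cap C}\subset\overline{\{y\}\cup C}$, yielding $V\ni x$ and $W'\supset F$ with $\overline{V\cap C}\cap\overline{W\cap C}\subset\overline V\cap W'=\emptyset$. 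Your version instead shrinks symmetrically on both sides, invoking $\overline{\kappa}$-regularity three times. Your route is a bit longer but has the minor virtue of being manifestly symmetric in $x$ and $y$, and your final intersection $V=V_0\cap U_x$, $W=W_0\cap U_y$ makes the disjointness of $V$ and $W$ completely explicit, whereas in the paper's write-up that disjointness is left implicit (strictly speaking one should replace $V$ by $V\cap U$ there).
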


\begin{proof} Fix any subset $C\subset X$ of cardinality $|C|\le\kappa$ and two distinct points $x,y\in X$. Being $\overline{\kappa}$-regular, the $T_1$-space $X$ is Hausdorff. Therefore, there are two disjoint open sets $U,W$ in $X$ such that $x\in U$ and $y\in W$. Observe that $F=\{y\}\cup \overline{W\cap C}$ is a closed subset of the set $\overline{\{y\}\cup C}$ and does not contain $x$. By the $\overline{\kappa}$-regularity of $X$, there are two disjoint open sets $V,W'$ in $X$ such that $x\in V$ and $F\subset W'$. Then $\overline{V\cap C}\cap \overline{W\cap C}\subset \overline{V}\cap W'=\emptyset$, witnessing that the space $X$ is $\overline{\kappa}$-Urysohn. 
\end{proof}

It is clear that each $\overline{\kappa}$-normal $T_1$-space is $\overline{\kappa}$-regular. The converse is true if all closed subets of density $\le\kappa$ are Lindel\"of. We recall that the {\em density} $d(X)$ of a topological space $X$ is the smallest cardinality of a dense set in $X$. A topological space $X$ is {\em Lindel\"of\/} if each open cover of $X$ contains a countable subcover.
The proofs of the following four propositions can be found in \cite{BBR}.

\begin{proposition}\label{p:rn} A $T_1$-space $X$ is $\overline{\kappa}$-normal if it is $\overline{\kappa}$-regular and each closed subset $C\subset X$ of density $d(C)\le\kappa$ is Lindel\"of.
\end{proposition}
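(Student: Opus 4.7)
My plan is to adapt the classical proof that a regular Lindelöf space is normal. Fix $C\subset X$ with $|C|\le\kappa$ and disjoint closed sets $A,B\subset\bar C$. The first observation is that $C$ is dense in $\bar C$, so $d(\bar C)\le|C|\le\kappa$, and by the hypothesis the closed subspace $\bar C$ of $X$ is Lindelöf. Consequently the closed subsets $A,B\subset\bar C$ are Lindelöf as well.

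Next I would exploit $\overline{\kappa}$-regularity to replace each point of $A$ by an open neighborhood whose closure misses $B$, and vice versa. Indeed, for every $a\in A$, since $B\subset\bar C$ is closed and $a\notin B$, the $\overline{\kappa}$-regularity of $X$ yields disjoint open sets $U_a\ni a$ and $W_a\supset B$; then $U_a\subset X\setminus W_a$ and since $X\setminus W_a$ is closed, $\overline{U_a}\subset X\setminus W_a$, so $\overline{U_a}\cap B=\emptyset$. In the same way, since $A\subset\bar C$ is closed, for every $b\in B$ I obtain an open $V_b\ni b$ with $\overline{V_b}\cap A=\emptyset$.

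Now I would use the Lindelöf property: the cover $\{U_a\}_{a\in A}$ of $A$ admits a countable subcover $\{U_n\}_{n\in\omega}$, and similarly $\{V_b\}_{b\in B}$ contains a countable subcover $\{V_n\}_{n\in\omega}$ of $B$. Performing the standard swelling,
\[
U_n'=U_n\setminus\bigcup_{i\le n}\overline{V_i},\qquad V_n'=V_n\setminus\bigcup_{i\le n}\overline{U_i},
\]
gives open sets whose unions $V=\bigcup_{n\in\omega}U_n'$ and $W=\bigcup_{n\in\omega}V_n'$ are open, contain $A$ and $B$ respectively (each $a\in A$ lies in some $U_n$, and $a\notin\overline{V_i}$ for every $i$, hence $a\in U_n'\subset V$; similarly for $B$), and are disjoint, since $U_m'\cap V_n'=\emptyset$ by construction whenever $m\le n$ or $n\le m$. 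This witnesses the $\overline{\kappa}$-normality of $X$.

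The main obstacle I anticipate is purely bookkeeping: one must verify that $\overline{\kappa}$-regularity can actually be applied in the form needed (point versus closed subset of $\bar C$), and that the swelling construction produces sets still containing $A$ and $B$ and remaining disjoint. No new topological idea beyond the classical Tychonoff/Urysohn trick is required, and the Lindelöf hypothesis is the precise strengthening that lets countably many regularity steps close off both closures.
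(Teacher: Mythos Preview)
Your proof is correct and follows essentially the same route as the paper's argument (which is deferred to \cite{BBR} in the main text, but a version appears later in the source): both reduce to the classical ``regular Lindel\"of implies normal'' trick, using $\overline{\kappa}$-regularity to get neighborhoods with closures missing the opposite set, passing to countable subcovers via the Lindel\"of property of $A,B\subset\bar C$, and then performing the standard subtraction $U_n\setminus\bigcup_{i\le n}\overline{V_i}$ to make the two unions disjoint. Your verification that $\overline{\kappa}$-regularity applies (since $A,B$ are closed subsets of $\bar C$ with $|C|\le\kappa$) and that $d(\bar C)\le\kappa$ are exactly the small checks needed.
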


\begin{proposition}\label{p:cn} Each $\kappa$-bounded Hausdorff space is $\overline{\kappa}$-normal and hence $\overline{\kappa}$-regular.
\end{proposition}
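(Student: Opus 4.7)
The plan is to reduce the statement to the classical fact that in a Hausdorff space any two disjoint compact sets can be separated by disjoint open neighborhoods. The key observation is that $\kappa$-boundedness turns closures of $\kappa$-small sets into compact subspaces, so after that reduction the ambient $\kappa$ plays no further role.

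First I would fix a subset $C\subset X$ with $|C|\le\kappa$ and disjoint closed sets $A,B\subset\bar C$. By the definition of $\kappa$-boundedness, $\bar C$ is compact. Since $A$ and $B$ are closed subsets of the compact set $\bar C$, they are themselves compact subsets of $X$. At this point the $\kappa$-bounded hypothesis is no longer needed, and I would invoke the standard result: in a Hausdorff space, any two disjoint compact subsets admit disjoint open neighborhoods. (This is the usual double application of the finite-subcover trick: for fixed $a\in A$, use compactness of $B$ and the Hausdorff property to produce disjoint open $U_a\ni a$ and $V_a\supset B$; then a finite subcover of $A$ by the $U_a$'s yields the required pair.) The resulting open sets $V\supset A$ and $W\supset B$ witness $\overline{\kappa}$-normality of $X$.

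For the second half, note that a Hausdorff space is automatically $T_1$, so the remark already made in the paper, namely that every $\overline{\kappa}$-normal $T_1$-space is $\overline{\kappa}$-regular, immediately gives $\overline{\kappa}$-regularity.

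There is no real obstacle here: the whole proposition is a one-step consequence of the compactness of $\bar C$ combined with the Hausdorff separation of disjoint compacta. The only thing to be careful about is to separate $A$ and $B$ by open sets in the ambient space $X$ rather than merely in the subspace $\bar C$, but this is precisely what the compact-separation lemma delivers.
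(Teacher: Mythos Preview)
Your proof is correct and follows exactly the same route as the paper: the closure $\bar C$ is compact by $\kappa$-boundedness, so the closed subsets $A,B\subset\bar C$ are compact, and then the standard fact that disjoint compact subsets of a Hausdorff space have disjoint open neighborhoods finishes the argument. Your derivation of $\overline{\kappa}$-regularity from $\overline{\kappa}$-normality via the $T_1$ property is also exactly what the paper intends.
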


\begin{proposition} Each subspace of a $\overline{\kappa}$-regular space is $\overline{\kappa}$-regular.
\end{proposition}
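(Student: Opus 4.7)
The plan is to verify the definition directly by lifting the hereditary problem on the subspace $Y$ to a problem on the ambient $\overline{\kappa}$-regular space $X$, applying the hypothesis there, and restricting back.

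More concretely, suppose $Y$ is a subspace of a $\overline{\kappa}$-regular space $X$. Fix a set $C\subset Y$ with $|C|\le\kappa$, a closed subset $F$ of $Y$ with $F\subset \overline{C}^{\,Y}$, and a point $y\in Y\setminus F$. The goal is to produce disjoint open sets of $Y$ separating $y$ from $F$. The natural candidate for a closed replacement of $F$ inside $X$ is $F':=\overline{F}^{\,X}$. Two facts need checking: first, $F'\subset \overline{C}^{\,X}$, which follows from $F\subset \overline{C}^{\,Y}\subset \overline{C}^{\,X}$ and the fact that $\overline{C}^{\,X}$ is closed in $X$; and second, $y\notin F'$, which follows from the identity $F'\cap Y=\overline{F}^{\,Y}=F$ (because $F$ is closed in $Y$) together with $y\in Y\setminus F$.

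With these verified, I apply the $\overline{\kappa}$-regularity of $X$ to the data $(C,F',y)$ to obtain disjoint open sets $V,W\subset X$ with $y\in V$ and $F'\subset W$. Restricting to $Y$ gives disjoint open sets $V':=V\cap Y$ and $W':=W\cap Y$ in $Y$ with $y\in V'$ and $F\subset F'\cap Y\subset W'$, which is exactly what is needed.

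There is no real obstacle here: the only subtlety is to ensure that the enlarged closed set $F'$ still sits inside the closure of $C$ (so that the $\overline{\kappa}$-regularity hypothesis actually applies), and this is immediate because closures in $X$ are closed and monotone. The argument works without any separation assumption beyond what is inherited; in particular no appeal to $T_1$ or normality is needed.
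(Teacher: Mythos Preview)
Your proof is correct. The paper does not actually give its own proof of this proposition but refers the reader to \cite{BBR}; your direct verification by lifting the separation problem to the ambient space and restricting back is the standard argument and almost certainly what is done there.
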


\begin{proposition} The Tychonoff product of $\overline{\kappa}$-regular spaces is $\overline{\kappa}$-regular.
\end{proposition}

Similar properties hold for $\overline{\kappa}$-Urysohn spaces. The proofs of the following two propositions are standard and so are left as exercises to the reader.

\begin{proposition} Each subspace of a $\overline{\kappa}$-Urysohn space is $\overline{\kappa}$-Urysohn.
\end{proposition}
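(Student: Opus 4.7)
The plan is a routine subspace argument: take witnesses for the $\overline{\kappa}$-Urysohn property in the ambient space and restrict them to the subspace, exploiting that the test set $C$ already lies in the subspace. Let $X$ be $\overline{\kappa}$-Urysohn and $Y\subset X$. Fix a set $C\subset Y$ with $|C|\le\kappa$ and two distinct points $x,y\in Y$. Since $C\subset X$ and $x,y\in X$, the $\overline{\kappa}$-Urysohn property of $X$ supplies disjoint open sets $V,W\subset X$ with $x\in V$, $y\in W$, and $\overline{V\cap C}^{X}\cap\overline{W\cap C}^{X}=\emptyset$, where the bars denote closures taken in $X$.

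Next I would set $V':=V\cap Y$ and $W':=W\cap Y$. These are open in $Y$, disjoint, and contain $x$ and $y$ respectively. Because $C$ is already a subset of $Y$, one has the equalities $V'\cap C=V\cap C$ and $W'\cap C=W\cap C$, so no information about $C$ is lost by restriction.

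Finally I would use the standard identity $\overline{A}^{Y}=\overline{A}^{X}\cap Y$ valid for any $A\subset Y$ in a subspace. Applying it to $A=V\cap C$ and $A=W\cap C$ yields
$$\overline{V'\cap C}^{Y}\cap\overline{W'\cap C}^{Y}=\bigl(\overline{V\cap C}^{X}\cap Y\bigr)\cap\bigl(\overline{W\cap C}^{X}\cap Y\bigr)\subset \overline{V\cap C}^{X}\cap\overline{W\cap C}^{X}=\emptyset,$$
which is exactly what the $\overline{\kappa}$-Urysohn property demands for $Y$. There is no real obstacle; the only point to be careful about is to take $C$ inside $Y$ from the outset (so that intersecting with $Y$ does not shrink $V\cap C$ or $W\cap C$), and to invoke the subspace-closure identity correctly.
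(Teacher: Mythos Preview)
Your proof is correct and is exactly the standard subspace argument the paper has in mind; indeed, the paper leaves this proposition as an exercise to the reader, and your write-up is precisely the routine verification one expects.
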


\begin{proposition} The Tychonoff product of $\overline{\kappa}$-Urysohn spaces is $\overline{\kappa}$-Urysohn.
\end{proposition}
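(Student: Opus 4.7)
The plan is to mimic the standard argument that the product of Urysohn spaces is Urysohn, with only cosmetic changes to carry along the auxiliary set $C$ of size $\le\kappa$. Let $(X_\alpha)_{\alpha\in A}$ be a family of $\overline{\kappa}$-Urysohn spaces, put $X=\prod_{\alpha\in A}X_\alpha$, and let $\pr_\alpha\colon X\to X_\alpha$ denote the coordinate projection. Fix a set $C\subset X$ with $|C|\le\kappa$ and two distinct points $x=(x_\alpha),y=(y_\alpha)\in X$. Choose an index $\alpha\in A$ witnessing $x_\alpha\ne y_\alpha$, and set $C_\alpha:=\pr_\alpha(C)$, so that $|C_\alpha|\le|C|\le\kappa$.

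Apply the $\overline{\kappa}$-Urysohn property of $X_\alpha$ to the set $C_\alpha$ and the distinct points $x_\alpha,y_\alpha$ to obtain disjoint open sets $V_\alpha,W_\alpha\subset X_\alpha$ with $x_\alpha\in V_\alpha$, $y_\alpha\in W_\alpha$ and
$$\overline{V_\alpha\cap C_\alpha}\cap\overline{W_\alpha\cap C_\alpha}=\emptyset.$$
Then define $V:=\pr_\alpha^{-1}(V_\alpha)$ and $W:=\pr_\alpha^{-1}(W_\alpha)$. These are disjoint open subsets of $X$ containing $x$ and $y$ respectively, so it remains only to verify the closure-disjointness condition.

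The key step is the following chain of inclusions: since $\pr_\alpha(V\cap C)\subset V_\alpha\cap C_\alpha$ and the projection $\pr_\alpha$ is continuous, we have $\pr_\alpha(\overline{V\cap C})\subset\overline{\pr_\alpha(V\cap C)}\subset\overline{V_\alpha\cap C_\alpha}$, and symmetrically for $W$. If a point $z$ belonged to $\overline{V\cap C}\cap\overline{W\cap C}$, then $\pr_\alpha(z)$ would lie in $\overline{V_\alpha\cap C_\alpha}\cap\overline{W_\alpha\cap C_\alpha}=\emptyset$, a contradiction. Hence $\overline{V\cap C}\cap\overline{W\cap C}=\emptyset$, as required.

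I do not anticipate any genuine obstacle here; the argument is entirely formal and parallels the standard $T_2$/Urysohn product proof. The only point worth double-checking is that one really only needs a single coordinate $\alpha$ (rather than a finite tuple, as one would need for regularity or normality), which is precisely what the separation of two points in the product affords.
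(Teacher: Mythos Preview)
Your proof is correct. The paper leaves this proposition as an exercise to the reader, and your argument is exactly the standard one intended: pick a coordinate where $x$ and $y$ differ, project $C$ down, apply the $\overline{\kappa}$-Urysohn property in that factor, and pull back.
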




\section{$\kappa$-Boundedness versus $\beta\kappa$-compactness}\label{s:kb-bkc}


Let $\kappa$ be an infinite cardinal endowed with the discrete topology. Since the discrete space $\kappa$ is normal, its Wallman compactifiation $\Wallman \kappa$ can be identified with the Stone-\v Cech compactification $\beta\kappa$ of $\kappa$. Therefore, $\beta\kappa$ is a compact Hausdorff space consisting of all ultrafilters on $\kappa$. 

By a {\em $\kappa$-sequence} in a topological space $X$ we understand any function $x:\kappa\to X$, which will be also written as $(x_\alpha)_{\alpha\in\kappa}$. 

 Given an ultrafilter $p\in\beta\kappa$, we say that a $\kappa$-sequence $x:\kappa\to X$ is  {\em $p$-convergent} to a point $\bar x\in X$ if for any neighborhood $U$ of $\bar x$ in $X$ the set $x^{-1}(U)$  belongs to the ultrafilter $p$. In this case we  call the point $\bar x$  a {\em $p$-limit} of the $\kappa$-sequence $x$. By $\lim_p x$ we denote the set of all $p$-limit points of the sequence in $X$. If the space $X$ is Hausdorff, then the set $\lim_p x$ contains at most one point. 

Let $M\subset\beta\kappa=\Wallman \kappa$ be a nonempty set of ultrafilters on $\kappa$. A topological space $X$ is defined to be
\begin{itemize}
\item {\em $M$-Hausdorff} if for any ultrafilter $p\in M$ and any $\kappa$-sequence $x:\kappa\to X$ the set $\lim_p x$ contains at most one point;
\item {\em $M$-compact}  if for any ultrafilter $p\in M$ and any $\kappa$-sequence $x:\kappa\to X$ the set $\lim_p x$ is not empty.
\end{itemize}

It is clear that each compact space is $M$-compact and each Hausdorff space is $M$-Hausdorff.

\begin{proposition}\label{p:MH=T1} Each $M$-Hausdorff space $X$ is a $T_1$-space.
\end{proposition}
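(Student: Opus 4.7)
The plan is to argue by contradiction using a constant $\kappa$-sequence. Suppose that $X$ is $M$-Hausdorff but not $T_1$; then some singleton $\{a\} \subset X$ fails to be closed, so its closure contains a point $b \ne a$. By definition of closure, every open neighborhood of $b$ meets $\{a\}$, hence contains the point $a$.

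Now I would pick any ultrafilter $p \in M$ (possible since $M$ is nonempty) and consider the constant $\kappa$-sequence $x : \kappa \to X$ with $x_\alpha = a$ for all $\alpha \in \kappa$. For any neighborhood $U$ of $a$ we have $x^{-1}(U) = \kappa \in p$, so $a \in \lim_p x$. On the other hand, for any neighborhood $V$ of $b$, we have $a \in V$ by the choice of $b$, hence $x^{-1}(V) = \kappa \in p$, so $b \in \lim_p x$ as well. Thus $\{a,b\} \subset \lim_p x$, contradicting the assumption that $X$ is $M$-Hausdorff.

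There is no real obstacle here; the only small subtlety is recognizing that the failure of $T_1$ yields two points $a \ne b$ with $b$ in the closure of $\{a\}$, which is exactly the asymmetric condition needed to make the constant sequence at $a$ converge to $b$. The nonemptiness of $M$ (explicitly assumed) is used to ensure that there exists a witnessing ultrafilter $p$.
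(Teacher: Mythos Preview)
Your proof is correct and follows essentially the same approach as the paper: both arguments use the constant $\kappa$-sequence at a point $a$ and observe that its $p$-limit set coincides with $\overline{\{a\}}$, which must then be a singleton by $M$-Hausdorffness. The only cosmetic difference is that the paper phrases this directly (showing $\overline{\{x\}}=\lim_p s$ is a singleton) while you argue by contradiction.
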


\begin{proof} Given any point $x\in X$, we should prove that the singleton $\{x\}$ is closed in $X$. Take any ultrafilter $p\in M$ and consider the constant $\kappa$-sequence $s:\kappa\to\{x\}$. Observe that the set $\lim_p s$ coincides with the closure $\overline{\{x\}}$ of $\{x\}$ in $X$. Since $X$ is $M$-Hausdorff, the set $\lim_p s=\overline{\{x\}}$ is a singleton. Then the singleton $\{x\}=\overline{\{x\}}$ is closed in $X$.
\end{proof}

The following characterization generalizes Theorem~4.9 in \cite{JV}.

\begin{theorem}\label{t:kb=kr+bk} A $T_1$-space $X$ is Hausdorff and $\kappa$-bounded if and only if it is $\overline{\kappa}$-regular and $\beta\kappa$-compact.
\end{theorem}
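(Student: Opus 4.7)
The plan is to prove the two implications separately, with the substantive content lying in the reverse direction.

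For the forward direction, assume $X$ is Hausdorff and $\kappa$-bounded. Then $\overline{\kappa}$-regularity comes for free from Proposition~\ref{p:cn}. For $\beta\kappa$-compactness, I take any ultrafilter $p\in\beta\kappa$ and any $\kappa$-sequence $x:\kappa\to X$; since $|x(\kappa)|\le\kappa$, the closure $K:=\overline{x(\kappa)}$ is compact. The family $\{\overline{x(A)}:A\in p\}$ consists of nonempty closed subsets of $K$ with the finite intersection property, so it has a common point $\bar x$. A routine check — if some open $U\ni\bar x$ satisfied $x^{-1}(U)\notin p$, then $x^{-1}(X\setminus U)\in p$, forcing $\bar x\in \overline{x(x^{-1}(X\setminus U))}\subset X\setminus U$, a contradiction — shows that $\bar x$ is a $p$-limit of $x$.

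For the reverse direction, assume $X$ is a $\overline{\kappa}$-regular, $\beta\kappa$-compact $T_1$-space. Hausdorffness is immediate from Lemma~\ref{l:Ur}. To establish $\kappa$-boundedness, fix a nonempty $A\subset X$ with $|A|\le\kappa$ and a surjection $x:\kappa\to A$. I define $e:\beta\kappa\to X$ by sending $p$ to the unique $p$-limit $\bar x_p$ of $x$ (existence by $\beta\kappa$-compactness, uniqueness by Hausdorffness). The aim is to show that $e$ is a continuous surjection onto $\overline{A}$, which will make $\overline{A}$ compact as the continuous image of $\beta\kappa$. That $e$ lands in $\overline{A}$ is clear, since every neighborhood $U$ of $\bar x_p$ satisfies $x^{-1}(U)\in p$, hence meets $A$. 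For surjectivity onto $\overline{A}$, given $y\in\overline{A}$, the family $\{x^{-1}(U):U\text{ open},\ y\in U\}$ has the finite intersection property (neighborhood systems are closed under finite intersections and $y\in\overline{A}$), extends to some ultrafilter $p\in\beta\kappa$, and $y$ is then a $p$-limit of $x$, so $y=e(p)$ by Hausdorffness.

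The crux is the continuity of $e$, and this is exactly where $\overline{\kappa}$-regularity enters. Given $p\in\beta\kappa$ and an open $V\ni e(p)$ in $X$, the set $F=\overline{A}\setminus V$ is a closed subset of $\overline{A}$ not containing $e(p)$. Applying $\overline{\kappa}$-regularity with $C=A$ yields disjoint open $W\ni e(p)$ and $W'\supset F$, whence $\overline{W}\cap W'=\emptyset$ and so
$$\overline{W\cap A}\subset \overline{W}\cap\overline{A}\subset \overline{A}\setminus F=\overline{A}\cap V\subset V.$$
Since $W$ is a neighborhood of $e(p)$, $B:=x^{-1}(W)\in p$, so $\langle B\rangle=\{q\in\beta\kappa:B\in q\}$ is a basic clopen neighborhood of $p$. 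For any $q\in\langle B\rangle$ and any open $U$ containing $e(q)$, the set $x^{-1}(U\cap W)\in q$ is nonempty, so every neighborhood of $e(q)$ meets $W\cap A$; hence $e(q)\in\overline{W\cap A}\subset V$. This gives $e(\langle B\rangle)\subset V$ and proves continuity. The main obstacle to avoid is the temptation to set $W=V$ directly: in general $\overline{V\cap A}$ need not lie inside $V$, and it is precisely this gap that $\overline{\kappa}$-regularity fills.
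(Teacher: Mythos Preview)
Your proof is correct. The forward direction is essentially the paper's argument in dual form: the paper argues by contradiction (if $\lim_p x=\emptyset$, cover the compact closure of the range by ``bad'' neighborhoods and reach a contradiction), while you use the finite intersection property of $\{\overline{x(A)}:A\in p\}$ directly; these are two sides of the same coin.

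The reverse direction, however, takes a genuinely different route. The paper argues by contradiction: assuming some $\overline{\{x_\alpha\}}$ is not compact, it picks an open cover $\mathcal U$ with no finite subcover, builds a filter on $\kappa$ from the sets $\{\alpha:x_\alpha\in W\}$ with $W$ a neighborhood of $Z\setminus\bigcup\mathcal V$ for finite $\mathcal V\subset\mathcal U$, extends to an ultrafilter $p$, and then uses $\overline{\kappa}$-regularity to separate the $p$-limit $\bar x$ from $Z\setminus U$ (for some $U\in\mathcal U$ containing $\bar x$), contradicting the construction of $p$. Your approach instead constructs a map $e:\beta\kappa\to\overline{A}$ sending each ultrafilter to the unique $p$-limit of a fixed surjection $x:\kappa\to A$, and proves it is a continuous surjection; compactness of $\overline{A}$ then falls out as a continuous image of $\beta\kappa$. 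Both arguments use $\overline{\kappa}$-regularity at the same essential moment---to pass from a neighborhood of a $p$-limit to a set whose closure is still controlled---but your version is more constructive and yields the extra information that $\overline{A}$ is a continuous image of $\beta\kappa$, while the paper's cover-based argument is closer to the classical proof that regular Lindel\"of spaces have compact-like behavior.
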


\begin{proof} To prove the ``only if'' part, assume that $X$ is Hausdorff and $\kappa$-bounded. By Proposition~\ref{p:cn}, the space $X$ is $\overline{\kappa}$-regular. Assuming that $X$ is not $\beta\kappa$-compact, we can find an ultrafilter $p\in\beta\w$ and a $\kappa$-sequence $x:\kappa\to X$ such that $\lim_px=\emptyset$. Then for every $x\in X$ we can find an open neighborhood $O_x\subset X$ such that $x^{-1}(O_x)\notin p$ and hence $x^{-1}(X\setminus O_x)\in p$, by the maximality of the filter $p$. By the $\kappa$-boundedness of $X$, the rangle $x(\kappa)=\{x(\alpha)\}_{\alpha\in\kappa}$ of $x$ is contained in some compact subset $K$ of $X$. By the compactness of $K$, the open cover $\{O_x:x\in K\}$ of $K$ has a finite subcover $\{O_{x_1},\dots,O_{x_n}\}$. Take any $\alpha\in\bigcap_{i=1}^nx^{-1}(X\setminus O_{x_i})\in p$ and observe that $x(\alpha)\in\bigcap_{i=1}^n(X\setminus O_{x_i})=X\setminus\bigcup_{i=1}^nO_{x_i}=\emptyset$, which is a desired contradiction.
\smallskip

To prove the ``if'' part, assume that $X$ is $\overline{\kappa}$-regular and $\beta\kappa$-compact. The $\overline{\kappa}$-regularity of the $T_1$-space $X$ implies that $X$ is Hausdorff. Assuming that $X$ is not $\kappa$-bounded, we can find a set $\{x_\alpha\}_{\alpha\in\kappa}$ whose closure $Z$ in $X$ is not compact and hence admits an open cover $\U$ having no finite subcovers of $Z$. Let $[\U]^{<\w}$ be the set of all finite subfamilies of $\U$. By the choice of  $\U$, for every $\V\in[\U]^{<\w}$ the closed subset $Z\setminus\bigcup\V$ of $Z$ is non-empty.  Consider the filter $\F$ on $\kappa$ generated by the base consisting of the  sets $\{\alpha\in\kappa:x_\alpha\in U\}$ where $U$ is an open set in $X$ containing the closed set $Z\setminus \bigcup\V$ for some $\V\in[\U]^{<\w}$. Let $p\in\beta\kappa$ be any ultrafilter enlarging the filter $\F$. By the $\beta\kappa$-compactness of $X$, the $\kappa$-sequence $(x_\alpha)_{\alpha\in\kappa}$ has a $p$-limit $\bar x\in Z$. Find a set $U\in\U$ containing $\bar x$. By the $\overline{\kappa}$-regularity of $X$, there exist disjoint open sets $V,W$ in $X$ such that $\bar x\in V$ and $Z\setminus U\subset W$. By the definition of the filter $\F$, the set $F:=\{\alpha\in\kappa:x_\alpha\in W\}$ belongs to $\F\subset p$. On the other hand, by the $p$-convergence of $(x_\alpha)_{\alpha\in\kappa}$ to $\bar x\in V$, the set $E=\{\alpha\in \kappa:x_\alpha\in V\}$ also belongs to $p$. Then $\emptyset=E\cap F\in p$, which contradicts the choice of $p$ as a filter.
\end{proof}




\section{$M$-closed sets in topological spaces}\label{s:M-closed}

In this section we assume that $\kappa$ is an infinite cardinal, endowed with the discrete topology, and $M$ is a nonempty subset of the Stone-\v Cech compactification $\beta\kappa=\Wallman \kappa$ of $\kappa$. 

 A subset $A$ of a topological space $X$ is defined to be  {\em $M$-closed in $X$} if $\lim_p x\subset A$ for any $\kappa$-sequence $x:\kappa\to A$ and any ultrafilter $p\in M$.
 
It is clear that the intersection of an arbitrary family of $M$-closed sets in $X$ is $M$-closed in $X$. The union of $M$-closed sets also is $M$-closed, but this is a less trivial fact.

\begin{lemma}\label{l:union} For any $M$-closed sets $A_1,A_2$ in a topological space $X$ the union $A_1\cup A_2$ is $M$-closed in $X$.
\end{lemma}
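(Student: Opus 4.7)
The plan is to reduce the statement to the $M$-closedness of one of the summands by exploiting the maximality of the ultrafilter $p$.

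Fix an ultrafilter $p \in M$ and a $\kappa$-sequence $x : \kappa \to A_1 \cup A_2$; we must show $\lim_p x \subset A_1 \cup A_2$. Partition $\kappa$ as
\[
\kappa = x^{-1}(A_1) \cup x^{-1}(A_2).
\]
Since $p$ is an ultrafilter, at least one of the two preimages belongs to $p$. Without loss of generality, $S := x^{-1}(A_1) \in p$ (otherwise swap the roles of $A_1$ and $A_2$). Note also that if $A_1$ is empty this cannot happen, so we may assume $A_1 \neq \emptyset$ (the degenerate cases, where one of the $A_i$ is empty, reduce the statement trivially to the $M$-closedness of the other).

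Next, I would truncate $x$ to a $\kappa$-sequence $y : \kappa \to A_1$ that agrees with $x$ on the $p$-large set $S$. Concretely, pick any base point $a_0 \in A_1$ and define
\[
y(\alpha) := \begin{cases} x(\alpha) & \text{if } \alpha \in S,\\ a_0 & \text{if } \alpha \in \kappa \setminus S.\end{cases}
\]
Then $y$ is a $\kappa$-sequence in $A_1$, and for any open $U \subset X$ one has $y^{-1}(U) \supset S \cap x^{-1}(U)$, so if $x^{-1}(U) \in p$ then $y^{-1}(U) \in p$. Hence every $p$-limit of $x$ is a $p$-limit of $y$, i.e.\ $\lim_p x \subset \lim_p y$.

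Finally, since $A_1$ is $M$-closed and $y$ takes values in $A_1$, we have $\lim_p y \subset A_1$, and thus
\[
\lim_p x \;\subset\; \lim_p y \;\subset\; A_1 \;\subset\; A_1 \cup A_2,
\]
which proves $M$-closedness of the union. The only subtle point is the passage from the sequence $x$ on a $p$-large subset to an honest $\kappa$-sequence in $A_1$; the ``constant on the complement'' trick handles this cleanly because $p$-convergence depends only on which sets lie in $p$, and modifying $y$ on the complement of $S \in p$ cannot affect the condition $y^{-1}(U) \in p$.
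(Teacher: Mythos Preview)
Your proof is correct and follows essentially the same approach as the paper: use the ultrafilter property to select $i$ with $x^{-1}(A_i)\in p$, modify $x$ on the $p$-null complement to obtain a $\kappa$-sequence $y$ into $A_i$, observe that $p$-limits are unchanged, and invoke the $M$-closedness of $A_i$. The only differences are cosmetic---you are more explicit about how $y$ is defined on $\kappa\setminus S$ and about the degenerate empty case.
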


\begin{proof} To show that $A_1\cup A_2$ is $M$-closed, take any $\kappa$-sequence $x:\kappa\to A_1\cup A_2$ that $p$-converges to some point $\bar x\in X$ for some ultrafilter $p\in M$.  Since $p\ni \kappa=x^{-1}(A_1)\cup x^{-1}(A_2)$ is an ultrafilter, there exists $i\in\{1,2\}$ such that $x^{-1}(A_i)\in p$. Take any $\kappa$-sequence $y:\kappa\to A_i$ such that $y(\alpha)=x(\alpha)$ for all $\alpha\in x^{-1}(A_i)$ and observe that $\bar x\in \lim_p y$. The $M$-closedness of the set $A_i$ implies that $\bar x\in A_i\subset A_1\cup A_2$, witnessing that the union $A_1\cup A_2$ is $M$-closed.
\end{proof}

The following two lemmas show that the $M$-compactness has two typical properties of the compactness.

\begin{lemma}\label{l:Mc=>Mk} Each $M$-closed subspace $F$ of an $M$-compact space $X$ is $M$-compact.
\end{lemma}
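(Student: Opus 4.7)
The plan is straightforward and essentially combines the definition of $M$-compactness applied in the ambient space $X$ with the definition of $M$-closedness to keep the limit inside $F$.

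First I would fix an arbitrary ultrafilter $p\in M$ and an arbitrary $\kappa$-sequence $x\colon\kappa\to F$; the task is to produce a $p$-limit of $x$ inside $F$ with respect to the subspace topology. Viewing $x$ as a $\kappa$-sequence into $X$, the $M$-compactness of $X$ hands me at least one point $\bar x\in X$ with $\bar x\in\lim_p x$ computed in $X$.

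Next I would invoke the hypothesis that $F$ is $M$-closed in $X$: since $x(\kappa)\subset F$ and $\bar x$ is a $p$-limit of $x$, the very definition of $M$-closedness yields $\bar x\in F$. The final routine check is that $\bar x$ remains a $p$-limit of $x$ when we interpret $x$ as a sequence in the subspace $F$: any neighborhood of $\bar x$ in $F$ has the form $U\cap F$ for some open $U\subset X$ containing $\bar x$, and
\[
x^{-1}(U\cap F)=x^{-1}(U)\cap x^{-1}(F)=x^{-1}(U)\cap\kappa=x^{-1}(U)\in p,
\]
so $\bar x\in\lim_p x$ in $F$ as well.

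There is essentially no obstacle here; the only thing worth being careful about is that one must separate the two roles the point $\bar x$ plays (as a $p$-limit in $X$ and as a $p$-limit in the subspace $F$), and the passage between them is exactly what the $M$-closedness of $F$ and the general identity $x^{-1}(U\cap F)=x^{-1}(U)$ (valid because $x$ takes values in $F$) provide.
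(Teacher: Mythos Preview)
Your proposal is correct and follows essentially the same approach as the paper's proof: fix $p\in M$ and $x:\kappa\to F$, use $M$-compactness of $X$ to obtain a $p$-limit $\bar x\in X$, then use $M$-closedness of $F$ to place $\bar x$ in $F$. The paper's version is terser and omits the explicit subspace-topology verification you include at the end, but that extra check is routine and does not constitute a different method.
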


\begin{proof}  To prove that $F$ is $M$-compact, take any ultrafilter $p\in M$ and any $\kappa$-sequence $x:\kappa\to F$. By the $M$-compactness of $X$ the $\kappa$-sequence $x$ is $p$-convergent to some point $\bar x\in X$ and by the $M$-closedness of $F$ in $X$, the point $\bar x$ belongs to the set $F$, witnessing that $A$ is $M$-compact.
\end{proof}

\begin{lemma}\label{l:Mk=>Mc} Each $M$-compact subspace $X$ of an $M$-Hausdorff space $Y$ is $M$-closed in $Y$.
\end{lemma}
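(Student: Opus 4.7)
The plan is to argue that any $p$-limit of a $\kappa$-sequence in $X$ computed in the ambient space $Y$ must actually lie in $X$, by producing a competing $p$-limit inside $X$ and invoking $M$-Hausdorffness of $Y$ to identify the two.

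Concretely, I would fix an ultrafilter $p\in M$ and a $\kappa$-sequence $x:\kappa\to X$, and take any $\bar y\in \lim_p x$ where the limit is computed in $Y$. The goal is to show $\bar y\in X$. The $M$-compactness of $X$ provides some $\bar x\in X$ with $\bar x\in \lim_p x$ computed in $X$. The first key step is the subspace observation: if $V$ is any open neighborhood of $\bar x$ in $Y$, then $V\cap X$ is an open neighborhood of $\bar x$ in $X$, so $x^{-1}(V\cap X)\in p$; since the sequence takes values in $X$, $x^{-1}(V)=x^{-1}(V\cap X)\in p$. Hence $\bar x$ is also a $p$-limit of $x$ when regarded as a sequence in $Y$.

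Now both $\bar y$ and $\bar x$ lie in $\lim_p x$ computed in $Y$. Since $Y$ is $M$-Hausdorff and $p\in M$, this set has at most one element, forcing $\bar y=\bar x\in X$. This establishes $\lim_p x\subseteq X$ for every $p\in M$ and every $\kappa$-sequence $x$ in $X$, which is exactly the definition of $M$-closedness of $X$ in $Y$.

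There is no real obstacle here; the argument is the direct analogue of the classical fact that a compact subspace of a Hausdorff space is closed, with ordinary convergence replaced by $p$-convergence along $\kappa$-sequences. The only thing one has to be a little careful about is the subspace step — making sure that being a $p$-limit in $X$ upgrades to being a $p$-limit in $Y$ — but this is immediate because the sequence lives in $X$, so preimages of open sets in $Y$ and of their traces on $X$ coincide.
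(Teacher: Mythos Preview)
Your proof is correct and follows essentially the same approach as the paper's: obtain a $p$-limit $\bar x\in X$ from $M$-compactness, then use $M$-Hausdorffness of $Y$ to conclude that $\lim_p x=\{\bar x\}\subset X$. The paper's version is slightly terser, leaving the subspace step (that a $p$-limit in $X$ is also a $p$-limit in $Y$) implicit, whereas you spell it out explicitly.
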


\begin{proof} To prove that $X$ is $M$-closed in $Y$, we need to show that for any ultrafilter $p\in M$ and any $\kappa$-sequence $x:\kappa\to X$ the set $\lim_p x$ of $p$-limit points of $x$ in $Y$ is contained in $X$. By the $M$-compactness of $X$, the set $\lim_p x$ contains some point $\bar x\in X$. By the $M$-Hausdorff property of $Y$, the set $\lim_p x$ coincides with the singleton $\{\bar x\}$ and hence $\lim_p x=\{\bar x\}\subset X$, witnessing that the set $X$ is $M$-closed in $Y$.
\end{proof}

For a subset $A\subset X$ its {\em $M$-closure} $\cl_M A$ is the smallest $M$-closed subset of $X$ that contains $A$. The set $\cl_M A$ is equal to the intersection of all $M$-closed subsets of $X$ that contain $A$. It is easy to see that $$\cl_M A=\cl^{<\kappa^+}_M A=\bigcup_{\alpha<\kappa^+}\cl^\alpha_M A,$$
where $\cl_M^0 A=A$, $\cl^1_M A=\bigcup_{x\in A^\kappa}\bigcup_{p\in M}\lim_p x$, and for any ordinal $\alpha\ge 1$, 
$$\cl^\alpha A=\cl_M^1\big(\cl_M^{<\alpha} A),\mbox{ \ where \ }\cl_M^{<\alpha} A=\bigcup_{\gamma<\alpha}\cl_M^\gamma A.$$

\begin{lemma}\label{l:card} For any subset $A$ of an $M$-Hausdorff space $X$ its $M$-closure has cardinality $$|\cl_M A|\le \max\{|M|^\kappa,|A|^\kappa\}.$$
\end{lemma}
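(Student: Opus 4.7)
The plan is to prove by transfinite induction on $\alpha<\kappa^+$ that $|\cl_M^\alpha A|\le\mu$, where $\mu:=\max\{|M|^\kappa,|A|^\kappa\}$, and then bound the $\kappa^+$-stage union at the end. First I would dispose of the degenerate cases. If $A=\emptyset$ there is nothing to prove, and if $|A|=1=|M|$ then the unique $\kappa$-sequence is constant; its only $p$-limit (unique by $M$-Hausdorffness) is the point itself, so $\cl_M A=A$ and the bound is trivial. In every remaining case at least one of $|M|,|A|$ is $\ge 2$, which forces $\mu\ge 2^\kappa\ge\kappa^+$---this is the inequality needed to absorb the factor $\kappa^+$ at the very end.

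The crucial one-step estimate is
$$|\cl_M^1 B|\;\le\;|M|\cdot|B|^\kappa,$$
which is immediate from $M$-Hausdorffness: for each pair $(x,p)\in B^\kappa\times M$ the set $\lim_p x$ contains at most one point, and $\cl_M^1 B$ is the union of these at most $|M|\cdot|B|^\kappa$ singletons. Assuming inductively that $|\cl_M^\gamma A|\le\mu$ for all $\gamma<\alpha$, the set $\cl_M^{<\alpha}A$ has cardinality at most $|\alpha|\cdot\mu\le\kappa\cdot\mu=\mu$, so the one-step estimate applied to $B:=\cl_M^{<\alpha}A$ gives $|\cl_M^\alpha A|\le|M|\cdot\mu^\kappa$. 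The cardinal arithmetic now collapses: using $(\max\{a,b\})^\kappa=\max\{a^\kappa,b^\kappa\}$ for infinite cardinals together with $\kappa\cdot\kappa=\kappa$, one computes $\mu^\kappa=\max\{|M|^\kappa,|A|^\kappa\}=\mu$, and since $|M|\le|M|^\kappa\le\mu$ we conclude $|\cl_M^\alpha A|\le\mu$.

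Taking the union over the $\kappa^+$ stages then yields $|\cl_M A|=|\bigcup_{\alpha<\kappa^+}\cl_M^\alpha A|\le\kappa^+\cdot\mu=\mu$, as required. I do not expect a genuine obstacle beyond careful cardinal bookkeeping; the one point worth flagging is that the \emph{linear} dependence on $|M|$ in the one-step bound---granted by $M$-Hausdorffness, not merely by the definition of $\cl_M^1$---is exactly what keeps $\mu^\kappa=\mu$ and thereby lets the induction close up.
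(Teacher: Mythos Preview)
Your proof is correct and follows essentially the same approach as the paper: transfinite induction on the closure stages $\cl_M^\alpha A$, using $M$-Hausdorffness to get the one-step bound $|\cl_M^1 B|\le|M|\cdot|B|^\kappa$, together with the key cardinal identity $\mu^\kappa=\mu$ and the observation $\kappa^+\le\mu$ in the non-degenerate case. The only cosmetic differences are that the paper runs the induction up to $\alpha=\kappa^+$ (using $|\alpha|\le\kappa^+\le\lambda$) whereas you stop at $\alpha<\kappa^+$ and handle the final $\kappa^+$-fold union separately; both are equivalent.
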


\begin{proof} Put $\lambda=\max\{|M|^\kappa,|A|^\kappa\}$ and observe that $|\lambda^\kappa|=\lambda$. If $\lambda=1$, then $A$ contains at most one point and by the $M$-Hausdorff property of $X$, $|\cl_M A|=|A|\le 1$. So, we assume that $\lambda\ge 2$. In this case $\kappa^+\le |2^\kappa|\le|\lambda^\kappa|=\lambda$.

Since $\cl_M A=\cl^{\kappa^+}_M A$, it suffices to prove that  $|\cl_M^\alpha A|\le\lambda$  for every $\alpha\le\kappa^+$. For $\alpha=0$ we have $|\cl_M^0A|=|A|\le\lambda$. 

Assume that for some ordinal $\alpha\le\kappa^+$ and all ordinals $\gamma<\alpha$ we have proved that $|\cl^\gamma_MA|\le\lambda$. Then $|\cl^{<\alpha}_M A|\le|\alpha|\cdot|\lambda|\le\kappa^+\cdot\lambda  \le\lambda\cdot\lambda=\lambda$. Taking into account that 
$$\textstyle{\cl^\alpha_M A=\cl^1_M(\cl^{<\alpha}_MA)=\bigcup\{\lim_p x:p\in M,\;x\in (\cl^{<\alpha}_MA)^\kappa\}},$$
we conclude that
$$|\cl^\alpha_M A|\le|M|\cdot|\cl^{<\alpha}_MA|^\kappa\le|M|\cdot |\lambda|^\kappa=\lambda.$$
\end{proof}

\begin{remark} By \cite[3.6.11]{Eng} the space $\beta\kappa=\Wallman \kappa$ has cardinality $2^{2^\kappa}$ and weight $2^\kappa$.
\end{remark}

Let $[M]$ be the $M$-closure of the set $\kappa$ in $\beta\kappa$. Lemma~\ref{l:card} implies that $|[M]|\le \max\{|M|^\kappa,|2^\kappa|\}$. Observe that any ultrafilter $p\in M$ coincides with the $p$-limit of the identity $\kappa$-sequence $\kappa\to\kappa$, which implies $M\subset[M]$. We shall show that the set $[M]$ is invariant under continuous self-maps of $\beta\kappa$ induced by self-maps of $\kappa$.
It is well-known that any function $f:\kappa\to\kappa$ can be uniquely extended to a continuous function $\bar f:\beta\kappa\to\beta\kappa$.

\begin{lemma}\label{l:Minv} For any function $f:\kappa\to\kappa$ we have $\bar f([M])\subset[M]$.
\end{lemma}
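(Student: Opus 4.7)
The plan is a transfinite induction on the levels of the $M$-closure operator, combined with the basic fact that continuous maps preserve $p$-limits. Specifically, recall that
\[
[M]=\cl_M\kappa=\bigcup_{\alpha<\kappa^+}\cl_M^\alpha \kappa,
\]
so it suffices to show by induction on $\alpha\le\kappa^+$ that $\bar f(\cl_M^\alpha\kappa)\subset[M]$.

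The base case is immediate: $\bar f(\cl_M^0\kappa)=\bar f(\kappa)=f(\kappa)\subset\kappa\subset[M]$, since $\bar f$ extends $f$ and $\kappa\subset[M]$. For the inductive step, suppose $\bar f(\cl_M^\gamma\kappa)\subset[M]$ for all $\gamma<\alpha$; then $\bar f(\cl_M^{<\alpha}\kappa)\subset[M]$. Given $\bar x\in\cl_M^\alpha\kappa=\cl_M^1(\cl_M^{<\alpha}\kappa)$, pick $p\in M$ and a $\kappa$-sequence $x:\kappa\to\cl_M^{<\alpha}\kappa$ with $\bar x\in\lim_p x$. The key observation is the general fact that if $g:Y\to Z$ is continuous and $y:\kappa\to Y$ is $p$-convergent to $\bar y$, then $g\circ y$ is $p$-convergent to $g(\bar y)$, because for any neighborhood $V$ of $g(\bar y)$ the preimage $g^{-1}(V)$ is a neighborhood of $\bar y$ and $(g\circ y)^{-1}(V)=y^{-1}(g^{-1}(V))\in p$. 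Applying this to $g=\bar f$ and $y=x$, the sequence $\bar f\circ x$ $p$-converges to $\bar f(\bar x)$, and by the inductive hypothesis it takes values in $[M]$. Since $[M]$ is $M$-closed in $\beta\kappa$, this gives $\bar f(\bar x)\in[M]$.

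There is no real obstacle here; the only thing to be careful about is that the induction is well set up with respect to the definition of the iterated $M$-closure (the $\cl_M^\alpha$ notation), and that one is allowed to treat $\kappa$-sequences into $[M]$ rather than into the particular stage $\cl_M^{<\alpha}\kappa$ when applying $M$-closedness. This is precisely why I phrase the induction as $\bar f(\cl_M^\alpha\kappa)\subset[M]$ rather than $\bar f(\cl_M^\alpha\kappa)\subset\cl_M^\alpha\kappa$: only the former is immediately preserved by the preservation-of-$p$-limits argument, since the $M$-closedness of $[M]$ is exactly what allows us to conclude $\bar f(\bar x)\in[M]$ from a sequence in $[M]$.
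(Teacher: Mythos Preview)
Your proof is correct and follows essentially the same approach as the paper's: transfinite induction on the levels $\cl_M^\alpha\kappa$, using continuity of $\bar f$ to push $p$-limits forward and the $M$-closedness of $[M]$ to conclude. The paper's argument is slightly terser but structurally identical, including the choice to prove $\bar f(\cl_M^\alpha\kappa)\subset[M]$ rather than containment in the stage itself.
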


\begin{proof} Since $[M]=\cl^{<\kappa^+}_M \kappa$, it suffices to prove that $\bar f(\cl^\alpha_M \kappa)\subset[M]$ for any ordinal $\alpha$. For $\alpha=0$ this is trivial: $\bar f(\cl^0_M \kappa)=f(\kappa)\subset \kappa\subset[M]$. Assume that for some ordinal $\alpha$ and all ordinals $\gamma<\alpha$ we have proved that $\bar f(\cl^\gamma_M\kappa)\subset[M]$. Then also  $\bar f(\cl^{<\alpha}_M\kappa)\subset[M]$ where $\cl^{<\alpha}_M\kappa=\bigcup_{\gamma<\alpha}\cl^\gamma_M\kappa$.

Given any ultrafilter $\bar x\in\cl^\alpha_M\kappa=\cl^1_M(\cl^{<\alpha}_M\kappa)$, find an ultrafilter $p\in M$ and a $\kappa$-sequence $x:\kappa\to\cl^{<\alpha}_M\kappa$ such that $\bar x\in \lim_p x$. The continuity of the map $\bar f:\beta\kappa\to\beta\kappa$ ensures that $\bar f(\bar x)$ is a $p$-limit of the $\kappa$-sequence $\bar f\circ x:\kappa\to \bar f(\cl^{<\alpha}_M\kappa)\subset [M]$. Now the $M$-closedness of $[M]$ guarantees that $\bar f(\bar x)\in\bar f(\lim_p x)\subset\lim_p (\bar f\circ x)\subset [M]$.
\end{proof}

\begin{lemma}\label{l:[M]} If a topological space $X$ is $M$-compact, then it is $[M]$-compact.
\end{lemma}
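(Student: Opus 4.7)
The plan is to introduce the set
$$K := \{q \in \beta\kappa : \text{every } \kappa\text{-sequence in } X \text{ has a } q\text{-limit in } X\}$$
and prove that $[M] \subseteq K$. Since $[M] = \cl_M \kappa$ is by definition the smallest $M$-closed subset of $\beta\kappa$ containing $\kappa$, this reduces to two checks: (a) $\kappa \subseteq K$ and (b) $K$ is $M$-closed in $\beta\kappa$.

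Part (a) is immediate: if $q$ is the principal ultrafilter at $\alpha \in \kappa$, then $y(\alpha)$ is a $q$-limit of any $y : \kappa \to X$, because $\alpha \in y^{-1}(U) \in q$ for every open $U \ni y(\alpha)$.

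For part (b), I would fix $p \in M$ and a $\kappa$-sequence $q : \kappa \to K$ with a $p$-limit $\bar q \in \beta\kappa$, and show $\bar q \in K$. Given any $y : \kappa \to X$, the hypothesis $q(\beta) \in K$ supplies a $q(\beta)$-limit $z(\beta) \in X$ of $y$ for each $\beta \in \kappa$. The $M$-compactness of $X$ then yields a $p$-limit $\bar y \in X$ of the resulting $\kappa$-sequence $z : \kappa \to X$. To see that $\bar y$ is a $\bar q$-limit of $y$, fix an open neighborhood $U$ of $\bar y$ and set $V = y^{-1}(U) \subseteq \kappa$. From $\bar y \in \lim_p z$ one gets $z^{-1}(U) \in p$; and for any $\beta \in z^{-1}(U)$, the point $z(\beta) \in U$ being a $q(\beta)$-limit of $y$ forces $V = y^{-1}(U) \in q(\beta)$. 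Hence $q^{-1}(\hat V) \in p$, where $\hat V := \{p' \in \beta\kappa : V \in p'\}$ is the basic clopen subset of $\beta\kappa$ determined by $V \subseteq \kappa$.

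The only delicate step is deducing $\bar q \in \hat V$ (equivalently $V \in \bar q$) from $q^{-1}(\hat V) \in p$. I would exploit that $\hat V$ is clopen in $\beta\kappa$: if $\bar q$ were not in $\hat V$, then $\beta\kappa \setminus \hat V$ would be an open neighborhood of $\bar q$, so $p$-convergence of $q$ to $\bar q$ would force $q^{-1}(\beta\kappa \setminus \hat V) \in p$, contradicting $q^{-1}(\hat V) \in p$ (their intersection is empty). This gives $V \in \bar q$, so $\bar y$ is a $\bar q$-limit of $y$, proving $\bar q \in K$ and completing the argument.
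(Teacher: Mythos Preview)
Your proof is correct. Both your argument and the paper's rest on the same iterated-limit computation: if $\bar q = \lim_p q(\beta)$ in $\beta\kappa$, $z(\beta)$ is a $q(\beta)$-limit of $y$, and $\bar y$ is a $p$-limit of $z$, then $\bar y$ is a $\bar q$-limit of $y$. The difference is organizational. The paper proceeds by transfinite induction along the stages $\cl^\alpha_M$ of the $M$-closure, re-running this computation at each successor stage. You instead package the argument once by introducing the set $K$ of ``good'' ultrafilters and showing it is $M$-closed and contains $\kappa$; the minimality of $[M]=\cl_M\kappa$ then finishes immediately. Your route is cleaner: it avoids ordinal bookkeeping and makes transparent that the only content is the single closure step. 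The paper's inductive framing, on the other hand, tracks more explicitly how each layer of $[M]$ is absorbed, which can be helpful when one later wants finer control over the stages (as in Lemma~\ref{l:clM}).
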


\begin{proof} For every ordinal $\alpha$, consider the subspace $M_\alpha=\cl^\alpha_M M$ of $\beta M$. Since $[M]=M_{\kappa^+}$, it suffices to prove that the $M$-compact space $X$ is $M_\alpha$-compact for every $\alpha$.

For $\alpha=0$ we have $M_0=M$; therefore the $M_0$-compactness of $X$ follows from the $M$-compactness of $X$.

Assume that for some ordinal $\alpha$ and all ordinals $\gamma<\alpha$ we have proved that $X$ is $M_\gamma$-compact. To show that $X$ is $M_\alpha$-compact, fix any $\kappa$-sequence $x:\kappa\to X$ and any ultrafilter $p\in M_\alpha=\cl^1_M(M_{<\alpha})$ where $M_{<\alpha}=\bigcup_{\gamma<\alpha}M_\gamma$. 
Since $M_\alpha=\cl^1_M(M_{<\alpha})$, there exists an ultrafilter $q\in M$ and a $\kappa$-sequence $u:\kappa\to M_{<\alpha}$ such that $p\in \lim_q u$ in $\beta\kappa$. For every $\alpha\in\kappa$, consider the ultrafilter $u(\alpha)\in M_{<\alpha}$. By the inductive assumption, the $M_{<\alpha}$-compactness of $X$ guarantees that the $\kappa$-sequence $x$ has a $u(\alpha)$-limit point $y_\alpha\in \lim_{u(\alpha)}x$ in $X$.  By the $M$-compactness of $X$, the $\kappa$-sequence $y:\kappa\to X$, $y:\alpha\mapsto y_\alpha$, has a $q$-limit point $\bar x\in \lim_q y$. We claim that $\bar x\in \lim_p x$. Indeed, for any open neighborhood $U\subset X$ of $\bar x\in\lim_q y$, the set $Q=\{\alpha\in \kappa:y_\alpha\in U\}$ belongs to the ultrafilter $q$. For every $\alpha\in Q$ the inclusion $y_\alpha\in U\cap  \lim_{u(\alpha)}x$ imply that the set $G_\alpha=\{\gamma\in\kappa:x(\gamma)\in U\}$ belongs to the ultrafilter $u(\alpha)$. Then the set $$\{\gamma\in\kappa:x(\gamma)\in U\}\supset\bigcup_{\alpha\in Q}G_\alpha$$belongs to the ultrafilter $p\in \lim_q u$, which means that $\bar x\in\lim_px$. 
\end{proof}

\begin{lemma}\label{l:clM} For any subset $A$ of a topological space $X$ we have $$\cl_M A\subset \cl_{[M]}A=\cl^1_{[M]}A.$$ If the space $X$ is $[M]$-Hausdorff and $M$-compact, then $\cl_M A=\cl_{[M]}A=\cl^1_{[M]} A$.
\end{lemma}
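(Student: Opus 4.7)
The plan is to split the first assertion into two pieces: $\cl_M A\subset\cl_{[M]}A$ and $\cl_{[M]}A=\cl^1_{[M]}A$. The first is immediate from $M\subset[M]$, which makes $[M]$-closedness strictly stronger than $M$-closedness, forcing the smallest $M$-closed superset of $A$ inside the smallest $[M]$-closed one. For the equality, the inclusion $\cl^1_{[M]}A\subset\cl_{[M]}A$ is trivial (constant $\kappa$-sequences show $A\subset\cl^1_{[M]}A$), so the real task is to verify that $\cl^1_{[M]}A$ is itself $[M]$-closed; once this is known it must contain $\cl_{[M]}A$.

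To prove this, I would iterate ultrafilters in Fubini style. Take $\bar z\in\lim_p z$ with $p\in[M]$ and $z:\kappa\to\cl^1_{[M]}A$; for each $\alpha$ pick $p_\alpha\in[M]$ and $x_\alpha:\kappa\to A$ with $z(\alpha)\in\lim_{p_\alpha}x_\alpha$. Fix a bijection $\varphi:\kappa\to\kappa\times\kappa$, define the concatenation $w:\kappa\to A$ by $w(\beta)=x_\alpha(\gamma)$ where $(\alpha,\gamma)=\varphi(\beta)$, and set $q_\alpha:=\bar\iota_\alpha(p_\alpha)$ for $\iota_\alpha:\kappa\to\kappa$, $\gamma\mapsto\varphi^{-1}(\alpha,\gamma)$; Lemma~\ref{l:Minv} places each $q_\alpha$ in $[M]$. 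Let $q\in\beta\kappa$ be the (unique) $p$-limit of the $\kappa$-sequence $(q_\alpha)_{\alpha\in\kappa}$. Two things need checking: (i) $q\in[M]$ and (ii) $\bar z\in\lim_q w$. For (i), I would first establish that $[M]$ is $[M]$-closed in $\beta\kappa$: it is $M$-closed in the compact (hence $M$-compact) space $\beta\kappa$ by definition, so Lemma~\ref{l:Mc=>Mk} makes $[M]$ $M$-compact, Lemma~\ref{l:[M]} promotes this to $[M]$-compactness, and the Hausdorffness of $\beta\kappa$ together with Lemma~\ref{l:Mk=>Mc} yields $[M]$-closedness of $[M]$ inside $\beta\kappa$. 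For (ii), any open $U\ni\bar z$ satisfies $\{\alpha:z(\alpha)\in U\}\in p$ by $\bar z\in\lim_p z$; for each such $\alpha$, $z(\alpha)\in\lim_{p_\alpha}x_\alpha$ forces $x_\alpha^{-1}(U)\in p_\alpha$, which unpacks via the definitions of $\iota_\alpha$ and $w$ to $w^{-1}(U)\in q_\alpha$. Hence $\{\alpha:w^{-1}(U)\in q_\alpha\}\in p$, i.e.\ $w^{-1}(U)\in q$, so $\bar z\in\lim_q w\subset\cl^1_{[M]}A$ as required.

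For the second assertion, assume $X$ is $[M]$-Hausdorff and $M$-compact; it suffices to show $\cl_M A$ is $[M]$-closed. The set $\cl_M A$ is $M$-closed in the $M$-compact space $X$, so Lemma~\ref{l:Mc=>Mk} makes it $M$-compact, Lemma~\ref{l:[M]} promotes this to $[M]$-compactness, and since $X$ is $[M]$-Hausdorff, Lemma~\ref{l:Mk=>Mc} renders $\cl_M A$ an $[M]$-closed subset of $X$ containing $A$. Therefore $\cl_{[M]}A\subset\cl_M A$, and combined with the reverse inclusion already established this yields equality.

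The main obstacle is the Fubini-type construction of $q$ and the verification~(i): the ultrafilter bookkeeping via $\varphi$ needs Lemma~\ref{l:Minv} to stay inside $[M]$, and the $[M]$-closedness of $[M]$ in $\beta\kappa$ must be extracted by three successive applications of the earlier lemmas to the subspace $[M]\subset\beta\kappa$. Everything else reduces to routine arguments about $M$-closures.
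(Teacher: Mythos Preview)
Your proposal is correct and follows essentially the same route as the paper: the Fubini-style concatenation via a bijection $\kappa\to\kappa\times\kappa$, the use of Lemma~\ref{l:Minv} to keep the pushed-forward ultrafilters inside $[M]$, and the chain Lemma~\ref{l:Mc=>Mk}\,$\to$\,Lemma~\ref{l:[M]}\,$\to$\,Lemma~\ref{l:Mk=>Mc} for the second assertion are exactly what the paper does. Your treatment of point~(i) is in fact slightly more careful than the paper's: since $p\in[M]$ rather than $p\in M$, one genuinely needs $[M]$-closedness (not merely $M$-closedness) of $[M]$ in $\beta\kappa$, and you spell out the three-lemma argument that extracts it, whereas the paper invokes only ``the $M$-closedness of $[M]$'' at that step.
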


\begin{proof} The inclusion $\cl_{[M]}^1 A\subset \cl_{[M]}A$ is trivial. To see that 
$\cl_{[M]}A\subset \cl_{[M]}^1A$, it suffices to check that the set $\cl_{[M]}^1A$ is $[M]$-closed in $X$. 

Take any ultrafilter $p\in[M]$ and any $\kappa$-sequence $x:\kappa\to\cl^1_{[M]}A$ that $p$-converges to some point $\bar x\in X$. By definition of the set $\cl^1_{[M]}A$, for every $\alpha\in\kappa$ the point $x(\alpha)\in\cl^1_{[M]}A$ is a $p_\alpha$-limit of some $\kappa$-sequence $x_\alpha:\kappa\to A$ for some ultrafilter $p_\alpha\in[M]$.

Let $b:\kappa\to\kappa\times\kappa$ be any bijective map.   For every $\alpha\in\kappa$ consider the function $f_\alpha:\kappa\to \kappa$, $f_\alpha:\gamma\mapsto b^{-1}(\alpha,\gamma)$, and let $\bar f_\alpha:\beta\kappa\to\beta\kappa$ be its continuous extension. By Lemma~\ref{l:Minv}, the ultrafilter $q_\alpha:=\bar f_\alpha(p_\alpha)$ belongs to the set $[M]$.

Observe that the functions $f_\alpha$, $\alpha\in\kappa$, have pairwise disjoint ranges in $\kappa$. So, we can define a $\kappa$-sequence $z:\kappa\to A$  such that $z\circ f_\alpha=x_\alpha$ for every $\alpha\in\kappa$. 
We claim that $x(\alpha)\in\lim_{q_\alpha}z$. Indeed, for any neighborhood $U_\alpha\subset X$ of $x(\alpha)\in\lim_{p_\alpha}x_\alpha$, the set $P_\alpha=\{\gamma\in\kappa:x_\alpha(\gamma)\in U_\alpha\}$ belongs to the ultrafilter $p_\alpha$. Then $f_\alpha(P_\alpha)\in \bar f_\alpha(p_\alpha)=q_\alpha\in[M]$. Now observe that 
$$
\{\beta\in\kappa:z(\beta)\in U_\alpha\}\supset \{b^{-1}(\alpha,\gamma):\gamma\in\kappa,\;x_\alpha(\gamma)\in U_\alpha\}= f_\alpha(P_\alpha)\in q_\alpha,
$$which means that $x(\alpha)\in\lim_{q_\alpha}z$.

Observe that the ultrafilter $$r:=\{R\subset \kappa:\{\alpha\in\kappa:R\in q_\alpha\}\in p\}$$
is the $p$-limit of the $\kappa$-sequence $q:\kappa\to[M]$, $q:\alpha\mapsto q_\alpha$, in the compact Hausdorff space $\beta\kappa$. The $M$-closedness of the set $[M]$ in $\beta\kappa$ ensures that  $r\in[M]$. 

We claim that $\bar x\in \lim_r z$. Indeed, for any open neighborhood $U\subset X$ of $\bar x\in \lim_p x$,  the set $P=\{\alpha\in\kappa:x(\alpha)\in U\}$ belongs to the ultrafilter $P$. For every $\alpha\in P$ we have $x(\alpha)\in U\cap\lim_{q_\alpha}z$, which implies $Q_\alpha:=\{\gamma\in\kappa:z(\gamma)\in U\}\in q_\alpha$. By definition of the ultrafilter $r$, the set $R=\bigcup_{\alpha\in P}Q_\alpha$ belongs to $r$.  Consequently, the set 
$$\{\gamma\in\kappa:z(\gamma)\in U\}\supset\bigcup_{\alpha\in P}Q_\alpha$$
belongs to $r$, which means that $\bar x\in \lim_r z\subset \cl^1_{[M]}A$. This completes the proof of the $[M]$-closedness of the set $\cl^1_{[M]}A$ and the equality $\cl^1_{[M]}A=\cl_{[M]}A$. Then $\cl_MA\subset\cl_{[M]}A=\cl^1_{[M]}A$.
\smallskip

Now assuming that the space $X$ is $M$-compact and $[M]$-Hausdorff, we shall prove that $\cl_{[M]}A\subset \cl_MA$. By Lemma~\ref{l:Mc=>Mk}, the $M$-closed subspace $\cl_M A$ of the $M$-compact space $X$ is $M$-compact and by Lemma~\ref{l:[M]}, the $M$-compact space $\cl_M A$ is $[M]$-compact. By Lemma~\ref{l:Mk=>Mc}, the $[M]$-compact subspace $\cl_M A$ of the $[M]$-Hausdorff space $X$ is $[M]$-closed in $X$. Then $\cl_{[M]}A\subset \cl_MA$ as $\cl_{[M]}A$ is the smallest $[M]$-closed subset of $X$ that contains $A$.
 \end{proof}
 
 \section{$M$-continuous maps between topological spaces}\label{s:M-continuous}
 
In this section we assume that $\kappa$ is an infinite cardinal endowed with the discrete topology, and $M$ is a nonempty subset of $\beta\kappa=\Wallman \kappa$. 
 
A function $f:Z\to Y$ between topological spaces is called {\em $M$-continuous} if $$\textstyle{f(\lim_p z)\subset \lim_p(f\circ z)}$$ for any $p\in M$ and $z\in Z^\kappa$. It is easy to see that each continuous function is $M$-continuous.

\begin{proposition} If a function $f:X\to Y$ between topological spaces is $M$-continuous, then for any $M$-closed set $B$ in $Y$ the preimage $f^{-1}(B)$ is $M$-closed in $X$.
\end{proposition}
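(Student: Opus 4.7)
The plan is to unpack both definitions and chase an arbitrary $p$-limit point through $f$. Fix an ultrafilter $p \in M$ and a $\kappa$-sequence $x : \kappa \to f^{-1}(B)$; I must show that every point of $\lim_p x$ lies in $f^{-1}(B)$.

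Let $\bar x \in \lim_p x$. Since $f$ is $M$-continuous, applying the inclusion $f(\lim_p x) \subset \lim_p(f \circ x)$ gives $f(\bar x) \in \lim_p(f \circ x)$. Now $f \circ x$ is a $\kappa$-sequence $\kappa \to B$ (because $x(\alpha) \in f^{-1}(B)$ means $f(x(\alpha)) \in B$), and $B$ is $M$-closed in $Y$, so $\lim_p(f \circ x) \subset B$. Therefore $f(\bar x) \in B$, i.e., $\bar x \in f^{-1}(B)$, which is exactly what is needed.

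There is no real obstacle here; the statement is essentially an immediate consequence of the definitions once one composes the sequence with $f$. The only thing worth remarking in the writeup is that the argument does not require $M$-Hausdorffness of either space and does not require that $\lim_p x$ be a singleton---the inclusion $f(\lim_p x) \subset \lim_p(f \circ x)$ already handles the possibly multi-valued case.
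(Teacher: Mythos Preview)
Your proof is correct and essentially identical to the paper's: both fix $p\in M$ and a $\kappa$-sequence $x:\kappa\to f^{-1}(B)$, observe that $f\circ x$ is a $\kappa$-sequence in $B$ so $\lim_p(f\circ x)\subset B$ by $M$-closedness, and then use $M$-continuity to get $f(\lim_p x)\subset\lim_p(f\circ x)\subset B$, hence $\lim_p x\subset f^{-1}(B)$. The only cosmetic difference is that you pick a point $\bar x\in\lim_p x$ while the paper works with the whole set at once.
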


\begin{proof} Let $B$ be an $M$-closed set in $Y$. To show that $f^{-1}(B)$ is $M$-closed in $X$, we need to check that $\lim_p x\subset f^{-1}(B)$ for any ultrafilter $p\in M$ and $\kappa$-sequence $x:\kappa\to f^{-1}(B)$. For the $\kappa$-sequence $f\circ x:\kappa\to B$, the $M$-closedness of the set $B$ implies $\lim_p (f\circ x)\subset B$, Since $f$ in $M$-continuous, $f(\lim_p x)\subset \lim_p(f\circ x)\subset B$ and hence $\lim_px\subset f^{-1}(B)$.
\end{proof}

\begin{lemma}\label{l:MMim} Let $f:X\to Y$ be a surjective $M$-continuous function between topological spaces. If the space $X$ is $M$-compact, then so is the space $Y$.
\end{lemma}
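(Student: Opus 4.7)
The plan is to mimic the classical proof that a continuous image of a compact space is compact, replacing ordinary compactness and continuity by their $M$-versions. Fix any ultrafilter $p\in M$ and any $\kappa$-sequence $y:\kappa\to Y$; the goal is to exhibit a $p$-limit point of $y$ in $Y$.

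First I would use the surjectivity of $f$ together with the axiom of choice to lift $y$ to a $\kappa$-sequence $x:\kappa\to X$ satisfying $f\circ x=y$: for each $\alpha\in\kappa$, pick any $x(\alpha)\in f^{-1}(y(\alpha))$, which is nonempty because $f$ is onto. Next, since $X$ is $M$-compact, the $\kappa$-sequence $x$ has some $p$-limit point $\bar x\in\lim_p x\subset X$. Finally, invoking the $M$-continuity of $f$ (applied to the $\kappa$-sequence $x$ and the ultrafilter $p\in M$), we get
$$f(\bar x)\in f(\lim_p x)\subset \lim_p(f\circ x)=\lim_p y,$$
so $\lim_p y\ne\emptyset$. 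Since $p\in M$ and $y$ were arbitrary, $Y$ is $M$-compact.

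There is no genuine obstacle here: the proof is essentially a one-line diagram chase once the definitions of $M$-compactness and $M$-continuity are unwound. The only subtlety worth flagging is the (harmless) use of the axiom of choice to construct the lift $x$ of $y$ through $f$.
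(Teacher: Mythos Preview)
Your proof is correct and essentially identical to the paper's: lift $y$ through $f$ by surjectivity, take a $p$-limit $\bar x$ in $X$ by $M$-compactness, and push it forward to $\lim_p y$ by $M$-continuity. The only addition is your explicit mention of the axiom of choice in constructing the lift, which the paper leaves implicit.
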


\begin{proof} To prove that $Y$ is $M$-compact, take any ultrafilter $p\in M$ and any $\kappa$-sequence $y:\kappa\to Y$. By the surjectivity of the function $f$, there exists a $\kappa$-sequence $x:\kappa\to X$ such that $y=f\circ x$. By the $M$-compactness of $X$ the set $\lim_p x$ is not empty. The $M$-continuity of $f$ ensures that $\emptyset\ne f(\lim_p x)\subset \lim_p(f\circ x)=\lim_p y$, which implies that the set $\lim_py$ is not empty and the space $Y$ is $M$-compact.
\end{proof}

Let us recall that a {\em tightness} $t(X)$ of a topological space $X$ is the smallest cardinal $\lambda$ such that for any subset $A$ in $X$ and any point $x\in\bar A$ there exists a subset $B\subset A$ of cardinality $|B|\le \lambda$ such that $x\in \bar B$.  

\begin{proposition} For any $\beta\kappa$-continuous function $f:X\to Y$ from a topological space $X$ to a topological space $Y$ and any subset $A\subset X$ of tightness $t(A)\le\kappa$ the restriction $f{\restriction}A$ is continuous.
\end{proposition}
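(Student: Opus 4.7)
The plan is to argue by contradiction: assume $f{\restriction}A$ fails to be continuous at some point $x\in A$, then manufacture a $\kappa$-sequence in $A$ whose $p$-limit (for a suitable ultrafilter $p\in\beta\kappa$) contradicts the $\beta\kappa$-continuity of $f$.

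First I would fix $x\in A$ at which $f{\restriction}A$ is not continuous, and choose an open neighborhood $V\subset Y$ of $f(x)$ such that $x$ lies in the closure (computed in $A$) of the set
$$B:=\{a\in A:f(a)\notin V\}.$$
The hypothesis $t(A)\le\kappa$ then yields a subset $C\subset B$ with $|C|\le\kappa$ and $x\in\overline{C}^A\subset\overline{C}$, the latter closure taken in $X$. Enumerate $C$ as a $\kappa$-sequence $c:\kappa\to C\subset A$ (padding with repetitions if $|C|<\kappa$, which we may do since $C$ is nonempty).

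Next I would build an ultrafilter witnessing $p$-convergence. Consider the family
$$\mathcal B:=\{c^{-1}(U):U\subset X\text{ open, }x\in U\}.$$
Because $x\in\overline{C}$, every such $U$ meets $C$, so each member of $\mathcal B$ is nonempty; and $\mathcal B$ is closed under finite intersections since $c^{-1}(U_1)\cap\cdots\cap c^{-1}(U_n)=c^{-1}(U_1\cap\cdots\cap U_n)$. Extend $\mathcal B$ to an ultrafilter $p\in\beta\kappa$. By construction $x\in\lim_p c$ in $X$.

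Finally, apply $\beta\kappa$-continuity: since $p\in\beta\kappa$, we have $f(x)\in\lim_p(f\circ c)$, hence the set $\{\alpha\in\kappa:f(c(\alpha))\in V\}$ lies in $p$ and is in particular nonempty. But $c(\alpha)\in C\subset B$ for every $\alpha$, so $f(c(\alpha))\notin V$ by the definition of $B$, a contradiction. Therefore $f{\restriction}A$ is continuous at every $x\in A$.

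The only mildly delicate point is the degenerate case $|C|<\kappa$: one must ensure the $\kappa$-sequence $c$ still satisfies $c^{-1}(U)\ne\emptyset$ for every open $U\ni x$, which is guaranteed as long as every element of $C$ appears in the enumeration (any padding by repetition suffices). Everything else is a direct application of the definitions of tightness, ultrafilter $p$-limit, and $\beta\kappa$-continuity.
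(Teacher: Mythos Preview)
Your argument is correct and follows essentially the same route as the paper's proof: assume discontinuity at a point, use the tightness bound to extract a subset of cardinality $\le\kappa$ from the ``bad'' set, enumerate it as a $\kappa$-sequence, extend the trace filter of neighborhoods to an ultrafilter $p\in\beta\kappa$, and derive a contradiction from $\beta\kappa$-continuity. The only difference is cosmetic (variable names and your explicit remark about padding when $|C|<\kappa$, which the paper handles implicitly by taking a surjection).
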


\begin{proof} Assuming that $f{\restriction}A$ is discontinuous, we can find a point $a\in A$ and an open neighborhood $U\subset Y$ of $f(a)$ such that the set $A\setminus f^{-1}(U)$ contains $a$ in its closure. Since $t(A)\le\kappa$, there exists a set $B\subset A\setminus f^{-1}(U)$ of cardinality $|B|\le\kappa$ that contains the point $a$ in its closure. Let $x:\kappa\to B$ be any surjective map and $p\in\beta\kappa$ be an ultrafilter containing the sets $x^{-1}(O_a)$ where $O_a$ runs over neighborhoods of $a$ in $X$. The choice of $p$ guarantees that $a\in\lim_p x$. Then $f(a)\in \lim_p (f\circ x)$ by the $\beta\kappa$-continuity of $f$. Since $f(a)\in U\cap \lim_p(f\circ x)$, the set $P=\{\alpha\in \kappa:f(x(\alpha))\in U\}$ belongs to the ultrafilter $p$ and hence is not empty. On the other hand, $f(x(P))\subset f(x(\kappa))\subset f(B)\subset Y\setminus U$, which contradicts $f(x(P))\subset U$.
This contradiction completes the proof of the continuity of the restriction $f{\restriction}A$. 
\end{proof}

\begin{proposition}\label{p:cont-d} For any $\beta\kappa$-continuous function $f:X\to Y$ from a topological space $X$ to a $\overline{\kappa}$-regular topological space $Y$ and any subset $A\subset X$ of density $d(A)\le\kappa$ the restriction $f{\restriction}A$ is continuous.
\end{proposition}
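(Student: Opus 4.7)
My plan is to argue by contradiction. Suppose that $f{\restriction}A$ is discontinuous at some $a\in A$, and pick an open $U\subset Y$ with $f(a)\in U$ and $a\in\overline{A\setminus f^{-1}(U)}$. Fix a dense subset $D\subset A$ with $|D|\le\kappa$ and set $C:=f(D)$. A first routine observation, using the $\beta\kappa$-continuity of $f$ on an enumeration of $D$ as a $\kappa$-sequence together with an ultrafilter witnessing convergence to any prescribed $a''\in A\subset\overline D$, is that $f(A)\subset\overline C$.

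The key step is to apply the $\overline\kappa$-regularity of $Y$ to the set $C$ of cardinality $\le\kappa$: since $\overline C\setminus U$ is a closed subset of $\overline C$ that misses $f(a)$, there are disjoint open sets $V,W\subset Y$ with $f(a)\in V$ and $\overline C\setminus U\subset W$. Because $V\cap W=\emptyset$ and $W$ is open, $\overline V\cap W=\emptyset$, which yields the crucial inclusion $\overline V\cap\overline C\subset U$.

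Next I claim that $a\notin\overline{D\cap f^{-1}(Y\setminus V)}$. Indeed, if $a$ lay in that closure, enumerating $B:=D\cap f^{-1}(Y\setminus V)$ as a $\kappa$-sequence $z:\kappa\to B$ and choosing an ultrafilter $p$ containing $z^{-1}(O)$ for every open neighborhood $O$ of $a$ would give $a\in\lim_p z$; then the $\beta\kappa$-continuity of $f$ would force $f(a)\in\lim_p(f\circ z)\subset\overline{f(B)}\subset Y\setminus V$, which is impossible. Consequently there is an open set $O\subset X$ with $a\in O$ and $O\cap D\subset f^{-1}(V)$.

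To finish, I show that this $O$ satisfies $O\cap A\subset f^{-1}(U)$, contradicting $a\in\overline{A\setminus f^{-1}(U)}$. Let $a'\in O\cap A$. The openness of $O$ and the density of $D$ in $A$ yield $a'\in\overline{O\cap D}$, so another application of $\beta\kappa$-continuity to an enumeration of $O\cap D$ and a suitable ultrafilter produces $f(a')\in\overline{f(O\cap D)}\subset\overline V$; combining this with $f(a')\in\overline C$ gives $f(a')\in\overline V\cap\overline C\subset U$. The main conceptual obstacle is bridging the density hypothesis on $A$ with the merely $\beta\kappa$-continuous map $f$, and it is resolved precisely by the $\overline\kappa$-regularity of $Y$: it provides the separation $V,W$ whose consequence $\overline V\cap\overline C\subset U$ lets the small dense set $D$ control the behavior of $f$ on all of $A$.
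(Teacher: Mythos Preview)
Your proof is correct and shares the paper's core ingredients: the same contradiction setup, the same dense set $D\subset A$ with $|D|\le\kappa$, and the same application of $\overline\kappa$-regularity to separate $f(a)$ from $\overline{f(D)}\setminus U$ by disjoint open sets $V,W$. The execution, however, is organized differently.

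The paper proceeds via two ultrafilter constructions. First, for every neighborhood $O_a$ of $a$ it produces (through an auxiliary point $\bar z\in O_a\setminus f^{-1}(U)$ and an ultrafilter converging to $\bar z$) a point of $D\cap O_a\cap f^{-1}(W)$; this shows that the sets $x^{-1}(O_a\cap f^{-1}(W))$ generate a filter, which it extends to an ultrafilter $q$. The contradiction then comes from $q$ containing both $x^{-1}(f^{-1}(V))$ (since $f(a)\in V$) and $x^{-1}(f^{-1}(W))$. Your route instead isolates the consequence $\overline V\cap\overline C\subset U$ of the separation, proves directly that $a\notin\overline{D\setminus f^{-1}(V)}$, extracts a concrete neighborhood $O$ with $O\cap D\subset f^{-1}(V)$, and then verifies $O\cap A\subset f^{-1}(U)$ pointwise. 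Your argument uses $\beta\kappa$-continuity three times (to get $f(A)\subset\overline C$, to avoid $\overline{D\setminus f^{-1}(V)}$, and to push each $f(a')$ into $\overline V$) rather than two, but it is more modular and makes the role of the inclusion $\overline V\cap\overline C\subset U$ explicit. Both approaches are equally valid; yours reads as a small repackaging of the same idea.
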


\begin{proof} Assuming that the restriction $f{\restriction}A$ is discontinuous, 
we can find an open subset $U\subset Y$ such that the set $A\cap f^{-1}(U)$ is not open in the subspace topology of $A$. Consequently, there exists a point $a\in A\cap f^{-1}(U)$ whose any neighborhood $O_a\subset X$ intersects the set $A\setminus f^{-1}(U)$.

Fix a dense subset $D\subset A$ of cardinality $|D|=d(A)\le\kappa$ and find a surjective map $x:\kappa\to D$. By the $\overline{\kappa}$-regularity of $Y$, there are two disjoint open sets $V,W$ in $Y$ such that $f(a)\in V\subset U$ and $\overline{f(D)}\setminus U\subset W$. 

\begin{claim}\label{cl:q-well} For any open neighborhood $O_a\subset A$ of $a$, the preimage $x^{-1}(O_a\cap f^{-1}(W))$ is not empty.
\end{claim}

\begin{proof} By the choice of $U$, the set $O_a\setminus f^{-1}(U)$ contains some point $\bar z\in A$.
Let $p\in\beta\kappa$ be any ultrafilter containing the sets $x^{-1}(O_{\bar z})$ where $O_{\bar z}$ runs over neighborhoods of $\bar z$ in $A$. The density of the set $D=x(\kappa)$ in $A\ni\bar z$ ensures that the ultrafilter $p$ is well-defined. The definition of $p$ guarantees that $\bar z\in \lim_p x$. The $\beta\kappa$-continuity of $f$ ensures that  $f(\bar z)\in\lim_p(f\circ x)$. Consequently, $f(\bar z)\in \overline{f(D)}\setminus U\subset W$ and the set $P=\{\alpha\in\kappa:f\circ x(\alpha)\in W\}$ belongs to the ultrafilter $p$. Since $\bar z\in O_a\cap\lim_p x$, the set $P'=\{\alpha\in \kappa:x(\alpha)\in O_a\}$ belongs to $p$, too. Choose any  $\alpha\in P\cap P'\in p$ and conclude that $x(\alpha)\in O_a\cap f^{-1}(W)$ witnessing that $x^{-1}(O_a\cap f^{-1}(W))\ne \emptyset$.
\end{proof}

Let $q\in\beta\kappa$ be any ultrafilter containing the sets $x^{-1}(O_a\cap f^{-1}(W))$ where $O_a$ runs over neighborhoods of $a$ in $A$. Claim~\ref{cl:q-well} ensures that the ultrafilter $q$ is well-defined. The definition of $q$ guarantees that $a\in\lim_q x$. Then $f(a)\in\lim_q(f\circ x)$, by the $\beta\kappa$-continuity of $f$. Since $f(a)\in V\cap\lim_q(f\circ x)$, the set $Q=\{\alpha\in\kappa:f\circ x(\alpha)\in V\}=x^{-1}(f^{-1}(V))$ belongs to the ultrafilter $q$. On the other hand, the definition of $q$ guarantees that $x^{-1}(f^{-1}(W))\in q$. Then $\emptyset=x^{-1}(f^{-1}(V))\cap x^{-1}(f^{-1}(W))\in q$, which contradicts the definition of a filter. This contradiction completes the proof of the continuity of the restriction $f{\restriction}A$.
\end{proof}

\section{The Wallman $M$-compact extension of a $T_1$-space}\label{s:WM}
 
In this section we assume that $\kappa$ is an infinite cardinal endowed with the discrete topology, and $M$ is a nonempty subset of $\beta\kappa=\Wallman \kappa$. By $[M]$ we denote the $M$-closure of $\kappa$ in the compact Hausdorff space $\beta\kappa$. Lemma~\ref{l:clM} implies that $$[M]=\cl_M\kappa=\cl_M(\cl_M\kappa)=\cl_{[M]}(\cl_M\kappa)=\cl_{[M]}{[M]}.$$

Given any $T_1$-space $X$, consider its Wallman extension $(\Wallman X,j_X)$. Since $j_X:X\to \Wallman X$ is a topological embedding, we can identify $X$ with the subspace $j_X(X)$ of $\Wallman X$. Denote by $\WM X$ the $[M]$-closure $\cl_{[M]}X$ of the set $X=j_X(X)$ in $\Wallman X$. By Lemma~\ref{l:clM}, $$\WM X=\cl_{[M]}X=\cl^1_{[M]}X.$$
The space $\WM X$ is called {\em the Wallman $M$-compact extension} of the $T_1$-space. The following proposition justifies the choice of the terminology.

\begin{proposition}\label{p:clM} For any $T_1$-space $X$ the subspace $\WM X$ of  $\Wallman X$ is $[M]$-compact and $\WM X\subset \Wallman_{\!\beta\kappa} X\subset \Wallman X$.
\end{proposition}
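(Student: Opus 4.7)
The plan is to deduce $[M]$-compactness of $\WM X$ from the compactness of $\Wallman X$ (Proposition~\ref{p:W-comp}) together with the fact that $\WM X = \cl_{[M]} X$ is $[M]$-closed in $\Wallman X$ by construction. The two inclusions will then reduce to the monotonicity of closure operators with respect to the ultrafilter class used.

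First I would record the standard fact that any compact space $Y$ is $\beta\kappa$-compact. Given $p \in \beta\kappa$ and a $\kappa$-sequence $x : \kappa \to Y$, the closed sets $\{\overline{x(A)} : A \in p\}$ form a centred family, so by compactness they have a common point $\bar x$; this $\bar x$ lies in $\lim_p x$, because if some open neighbourhood $U$ of $\bar x$ failed to satisfy $x^{-1}(U) \in p$, then $x^{-1}(Y \setminus U) \in p$ and hence $\bar x \in \overline{x(x^{-1}(Y \setminus U))} \subset Y \setminus U$, a contradiction. Applying this to $Y = \Wallman X$ yields a $p$-limit in $\Wallman X$ for every $p \in \beta\kappa$ and every $\kappa$-sequence in $\Wallman X$. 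Now for $p \in [M] \subset \beta\kappa$ and $x : \kappa \to \WM X$, the $p$-limit $\bar x$ of $x$ exists in $\Wallman X$, and since $\WM X$ is $[M]$-closed, $\bar x \in \WM X$. A small sanity check is that $p$-limits computed in the subspace $\WM X$ coincide with those in $\Wallman X$, which is immediate since $x^{-1}(V \cap \WM X) = x^{-1}(V)$ whenever $x$ takes values in $\WM X$.

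For the inclusion $\WM X \subset \Wallman_{\!\beta\kappa} X$, the key observation is antitonicity: if $N \subset N'$, every $N'$-closed set is $N$-closed, so $N$-closures are contained in $N'$-closures. Here I would check that $[\beta\kappa] = \beta\kappa$: every $p \in \beta\kappa$ equals the $p$-limit of the identity $\kappa$-sequence $\kappa \to \kappa$, so $\beta\kappa \subset \cl^1_{\beta\kappa}(\kappa) \subset [\beta\kappa] \subset \beta\kappa$. Hence $\Wallman_{\!\beta\kappa} X = \cl_{[\beta\kappa]} X = \cl_{\beta\kappa} X$. Since $[M] \subset \beta\kappa$, this set is $[M]$-closed and contains $X$, so it contains the minimal $[M]$-closed set extending $X$, namely $\WM X$. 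The outer inclusion $\Wallman_{\!\beta\kappa} X \subset \Wallman X$ is immediate from the definition. The proof carries no substantive obstacle; the only point to handle carefully is the antitone behaviour of the $M$-closure operator with respect to enlarging the ultrafilter class.
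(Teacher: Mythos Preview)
Your proposal is correct and follows essentially the same route as the paper: the paper invokes Proposition~\ref{p:W-comp} to get compactness of $\Wallman X$, hence $[M]$-compactness, and then applies Lemma~\ref{l:Mc=>Mk} to the $[M]$-closed subset $\WM X=\cl_{[M]}X$, while you inline both of these arguments explicitly. For the inclusions the paper simply writes ``$[M]\subset\beta\kappa$ implies $\WM X\subset \Wallman_{\!\beta\kappa}X\subset\Wallman X$,'' whereas you supply the underlying antitonicity of the $N$-closure in the parameter $N$ and the verification $[\beta\kappa]=\beta\kappa$; this extra care is harmless and in fact clarifies a step the paper leaves implicit.
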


\begin{proof} By Proposition~\ref{p:W-comp}, the space $\Wallman X$ is compact and hence $[M]$-compact. Then its $[M]$-closed subset $\WM X=\cl_{[M]}X$ is $[M]$-compact by  Lemma~\ref{l:Mc=>Mk}. The inclusion $[M]\subset \beta\kappa$ implies $\WM X\subset \Wallman_{\!\beta\kappa}X\subset\Wallman X$.
\end{proof}

\begin{proposition}\label{p:Wbk} For any $T_1$-space $X$ its Wallman $\beta\kappa$-compact extension $\Wallman_{\!\beta\kappa} X$ is $\kappa$-bounded and is equal to the subset $$\Wk X=\textstyle{\bigcup}\{\overline{j_X(C)}:C\subset X,\;|C|\le\kappa\}$$of $\Wallman X$.
\end{proposition}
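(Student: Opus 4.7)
The plan is to first establish the set equality $\Wallman_{\!\beta\kappa} X = \Wk X$, and then derive $\kappa$-boundedness cheaply from the description of $\Wk X$ as a union of closures of images of $\kappa$-sized subsets of $X$ inside the compact space $\Wallman X$ (Proposition~\ref{p:W-comp}). Identifying $X$ with $j_X(X) \subset \Wallman X$, the starting observation is that $[\beta\kappa] = \beta\kappa$: every $p \in \beta\kappa$ is the $p$-limit of the identity $\kappa$-sequence $\kappa \to \kappa$, so $\beta\kappa \subset \cl^1_{\beta\kappa} \kappa \subset [\beta\kappa]$. Hence $\Wallman_{\!\beta\kappa} X = \cl_{\beta\kappa} X$, and since $\Wallman X$ is compact Hausdorff, Lemma~\ref{l:clM} collapses the iterated closure to one step:
$$\Wallman_{\!\beta\kappa} X \;=\; \cl_{\beta\kappa} X \;=\; \cl^1_{\beta\kappa} X \;=\; \bigcup_{x \in X^\kappa}\bigcup_{p \in \beta\kappa} \lim_p x.$$

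For $\Wk X \subset \Wallman_{\!\beta\kappa} X$, fix $C \subset X$ with $|C| \le \kappa$, a surjection $x : \kappa \to C$, and $\F \in \overline{j_X(C)}$. I would construct $p \in \beta\kappa$ with $\F \in \lim_p x$ from the family $\mathcal{B} = \{x^{-1}(U) : U \subset X \text{ open},\ \F \in \langle U \rangle\}$. Closure under finite intersections follows from $\langle U_1 \cap U_2 \rangle \subset \langle U_1 \rangle \cap \langle U_2 \rangle$, and the assumption $\F \in \overline{j_X(C)}$ means every basic neighborhood $\langle U \rangle$ of $\F$ meets $j_X(C)$; using $j_X^{-1}(\langle U \rangle) = U$ for the $T_1$-space $X$, this is exactly $x^{-1}(U) \ne \emptyset$. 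So $\mathcal{B}$ is a proper filter base on $\kappa$, and any ultrafilter $p \in \beta\kappa$ extending it satisfies $\F \in \lim_p x$, placing $\F$ in $\cl^1_{\beta\kappa} X = \Wallman_{\!\beta\kappa} X$. Conversely, any $\F \in \cl^1_{\beta\kappa} X$ has $\F \in \lim_p x$ for some $p$ and $x : \kappa \to X$; setting $C := x(\kappa)$, each basic $\langle U \rangle \ni \F$ has $x^{-1}(U) \in p$ nonempty, so $j_X(C) \cap \langle U \rangle \ne \emptyset$, whence $\F \in \overline{j_X(C)} \subset \Wk X$.

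For $\kappa$-boundedness, given $A \subset \Wallman_{\!\beta\kappa} X$ with $|A| \le \kappa$, for each $\F \in A$ pick $C_\F \subset X$ of cardinality $\le \kappa$ with $\F \in \overline{j_X(C_\F)}$ and set $C = \bigcup_{\F \in A} C_\F$, so $|C| \le \kappa$ and $A \subset \overline{j_X(C)}$. Since $\overline{j_X(C)}$ is closed in the compact space $\Wallman X$ it is compact; and because it lies entirely inside $\Wk X = \Wallman_{\!\beta\kappa} X$, the closure of $A$ in $\Wallman_{\!\beta\kappa} X$ coincides with its closure in $\Wallman X$ and is contained in $\overline{j_X(C)}$, hence is compact. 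The main technical point is the filter-base construction in the first inclusion---correctly matching the Wallman-topology condition $\F \in \overline{j_X(C)}$ with the combinatorial statement that $\{x^{-1}(U)\}$ is a proper filter base on $\kappa$---while the $\kappa$-boundedness follows as a straightforward consequence of the explicit description of $\Wk X$.
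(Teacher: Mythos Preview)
Your proof is correct and follows essentially the same approach as the paper: the filter-base construction for $\Wk X \subset \Wallman_{\!\beta\kappa}X$ and the $\kappa$-boundedness argument are identical to the paper's. One small slip: $\Wallman X$ need not be Hausdorff (it is Hausdorff only when $X$ is normal, Proposition~\ref{p:W-normal}), but this does not matter since the part of Lemma~\ref{l:clM} you invoke, namely $\cl_{[M]}A=\cl^1_{[M]}A$, holds in any topological space. For the inclusion $\Wallman_{\!\beta\kappa}X \subset \Wk X$ the paper argues slightly differently---rather than passing through the $\cl^1$ description, it shows directly that $\Wk X$ is $\beta\kappa$-closed in $\Wallman X$ (any $\kappa$-sequence in $\Wk X$ sits inside a single $\overline{j_X(C)}$ with $|C|\le\kappa$, hence so does its limit), so that the smallest $\beta\kappa$-closed set containing $X$ lies inside $\Wk X$; your route via Lemma~\ref{l:clM} is a legitimate shortcut.
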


\begin{proof} To see that $\Wallman_{\!\beta\kappa} X\subset \Wallman_{\!\bar \kappa} X$, it suffices to show that the subspace $\Wallman_{\!\bar \kappa} X$ is $\beta\kappa$-closed in $\Wallman X$.
Fix any ultrafilter $p\in\beta\kappa$ and any $\kappa$-sequence $x:\kappa\to \Wallman_{\!\bar\kappa} X$ that $p$-converges to some element $\bar x\in \Wallman X$. By the definition of $\Wallman_{\!\bar\kappa} X$, for any $\alpha\in\kappa$, there exists a set $C_\alpha\subset X$ of cardinality $|C_\alpha|\le\kappa$ such that $x(\alpha)\in\overline{j_X(C_\alpha)}\subset\Wallman X$. Observe that the set $C=\bigcup_{\alpha\in\kappa}C_\alpha$ has cardinality $|C|\le\kappa$. Then $\bar x\in \overline{C}\subset \Wk X$ by the definition of $\Wk X$.

To see that $\Wk X\subset\Wallman_{\!\beta\kappa}X$, take any element $\bar x\in \Wk X$ and find a subset $C\subset X$ of cardinality $|C|\le\kappa$ such that $\bar x\in \overline{j_X(C)}$. It follows that the set $j_X(C)$ is not empty and hence admits a surjective map $x:\kappa\to j_X(C)$. Denote by $w$ the topology of the Wallman extension $\Wallman X$ of $X$. Take any ultrafilter $p\in\beta\kappa$ containing the filter $\{x^{-1}(U):\bar x\in U\in w\}$ and observe that the point $\bar x$ is a $p$-limit of the $\kappa$-sequence $x:\kappa\to j_X(X)$ in $\Wallman X$. Then $\bar x\in\cl_{\beta\kappa} X=\Wallman_{\!\beta\kappa}X$ by the definition of $\cl_{\beta\kappa} X$. Therefore, $\Wallman_{\!\beta\kappa} X=\Wk X$.

To see that the space $\Wallman_{\!\beta\kappa}X=\Wk X$ is $\kappa$-bounded, take any subset $D\subset \Wk X$ of cardinality $|D|\le\kappa$. For any $u\in D\subset\Wk X$ find a subset $C_u\subset X$ of cardinality $\le\kappa$ such that $u\in\overline{j_X(C_u)}$. Then the set $C=\bigcup_{u\in D}C_u$ has cardinality $\le\kappa$ and $D\subset \overline{j_X(C)}$. The compactness of the Wallman extension $\Wallman X$ implies the compactness of $\overline{j_X(C)}$. Then the closure $\overline{D}$ of $D$ is compact, being a closed subset of the compact space $\overline{j_X(C)}$.
\end{proof}


\begin{proposition}\label{p:WH} For a $T_1$-space $X$ the following conditions are equivalent:
\begin{enumerate}
\item $X$ is $\overline{\kappa}$-normal;
\item $\Wallman_{\!\beta\kappa}X=\Wk X$ is Hausdorff;
\item $\Wallman_{\!\beta\kappa}X=\Wk X$ is $\overline{\kappa}$-Urysohn.
\end{enumerate}
\end{proposition}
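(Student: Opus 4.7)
The plan is to handle $(2)\Leftrightarrow(3)$ via abstract properties of $\Wk X$ and to prove $(1)\Leftrightarrow(2)$ using the standard Wallman correspondence, reducing separation in $\Wk X$ to separation inside $\bar C$ for some $|C|\le\kappa$. The implication $(3)\Rightarrow(2)$ is immediate, because every $\overline{\kappa}$-Urysohn space is Hausdorff. For $(2)\Rightarrow(3)$, Proposition~\ref{p:Wbk} makes $\Wk X$ $\kappa$-bounded, and if (2) holds then $\Wk X$ is also Hausdorff, hence $T_1$; Proposition~\ref{p:cn} then renders it $\overline{\kappa}$-normal, hence $\overline{\kappa}$-regular, and Lemma~\ref{l:Ur} delivers the $\overline{\kappa}$-Urysohn property.

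For $(1)\Leftrightarrow(2)$ I would first record the standard Wallman facts. For closed $A\subset X$ the set $\tilde A:=\{\F\in\Wallman X:A\in\F\}$ is the closed complement of $\langle X\setminus A\rangle$ and equals $\overline{j_X(A)}^{\Wallman X}$; disjoint closed $A,B\subset X$ give $\tilde A\cap\tilde B=\emptyset$ (no ultrafilter contains two disjoint closed sets); disjoint open $V,W\subset X$ give $\langle V\rangle\cap\langle W\rangle=\emptyset$ in $\Wallman X$ (a common $F_1\subset V$ and $F_2\subset W$ would force $F_1\cap F_2\in\F$ to be empty); and for $|C|\le\kappa$ one has $\tilde{\bar C}=\overline{j_X(C)}^{\Wallman X}\subset\Wk X$, compact as a closed subset of the compact $\Wallman X$. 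Now for $(1)\Rightarrow(2)$, take distinct $\F,\G\in\Wk X$ and choose $C_1,C_2$ with $|C_i|\le\kappa$ and $\F\in\overline{j_X(C_1)}$, $\G\in\overline{j_X(C_2)}$; setting $C:=C_1\cup C_2$ yields $\bar C\in\F\cap\G$. Pick $A\in\F\setminus\G$ and, by the ultrafilter property of $\G$, some $B\in\G$ with $A\cap B=\emptyset$. Then $A':=A\cap\bar C\in\F$ and $B':=B\cap\bar C\in\G$ are disjoint closed subsets of $\bar C$, so (1) produces disjoint open $V,W\subset X$ with $A'\subset V$, $B'\subset W$; this gives $\F\in\langle V\rangle$, $\G\in\langle W\rangle$, and $\langle V\rangle\cap\langle W\rangle=\emptyset$.

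The main technical step is $(2)\Rightarrow(1)$. Given $|C|\le\kappa$ and disjoint closed $A,B\subset\bar C$, the sets $\tilde A,\tilde B\subset\tilde{\bar C}\subset\Wk X$ are disjoint compacta in the Hausdorff space $\Wk X$, so they admit disjoint $\Wk X$-open separators $U_A=\tilde U_A\cap\Wk X$, $U_B=\tilde U_B\cap\Wk X$ with $\tilde U_A,\tilde U_B$ open in $\Wallman X$. To convert these into open subsets of $X$, I would pick around each $\F\in\tilde A$ a basic $\langle V_\F\rangle\subset\tilde U_A$ containing $\F$, extract a finite subcover $\langle V_{\F_1}\rangle,\dots,\langle V_{\F_k}\rangle$ of the compact $\tilde A$, and set $V:=V_{\F_1}\cup\cdots\cup V_{\F_k}$; construct $W=W_{\G_1}\cup\cdots\cup W_{\G_\ell}$ analogously on the $\tilde B$-side. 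Then $A\subset V$ (for $a\in A$, $j_X(a)\in\tilde A\subset\bigcup_i\langle V_{\F_i}\rangle$ forces $a\in V_{\F_i}\subset V$ for some $i$), and likewise $B\subset W$. The subtle point is that $\langle V\rangle$ is in general strictly larger than $\bigcup_i\langle V_{\F_i}\rangle$, so one cannot conclude $\langle V\rangle\subset\tilde U_A$; nevertheless $V\cap W=\emptyset$ can be verified pointwise inside $X$, since any $x\in V\cap W$ lies in some $V_{\F_i}\cap W_{\G_j}$, whence $j_X(x)\in\langle V_{\F_i}\rangle\cap\langle W_{\G_j}\rangle\cap\Wk X\subset\tilde U_A\cap\tilde U_B\cap\Wk X=U_A\cap U_B=\emptyset$, the desired contradiction.
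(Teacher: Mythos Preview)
Your proof is correct. For $(2)\Leftrightarrow(3)$ you give exactly the paper's argument (Proposition~\ref{p:Wbk}, Proposition~\ref{p:cn}, Lemma~\ref{l:Ur}). For $(1)\Leftrightarrow(2)$ the paper simply cites \cite{BBR}, so your direct proof supplies what the paper outsources. Your $(1)\Rightarrow(2)$ is the standard Wallman argument and matches the one in \cite{BBR}. Your $(2)\Rightarrow(1)$ differs slightly: the \cite{BBR} proof first observes (via Proposition~\ref{p:cn}) that the Hausdorff $\kappa$-bounded space $\Wk X$ is itself $\overline{\kappa}$-normal, then separates the disjoint closed sets $\tilde A,\tilde B\subset\overline{j_X(C)}$ by open sets in $\Wk X$ and intersects with $X$; you instead separate the disjoint compacta $\tilde A,\tilde B$ directly by Hausdorffness and then pass to $X$ via finite basic covers. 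Your route is a bit more hands-on but avoids a second appeal to Proposition~\ref{p:cn}; both are entirely standard and equally valid.
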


\begin{proof} The equivalence $(1)\Leftrightarrow(2)$ was proved in \cite{BBR} and $(3)\Ra(2)$ is trivial. It remains to prove that $(2)\Ra(3)$. Assume that the space $\Wallman_{\!\beta\kappa}X=\Wk X$ is Hausdorff. By Propositions~\ref{p:Wbk}, \ref{p:cn}, and Lemma~\ref{l:Ur}, this space is $\kappa$-bounded, $\overline{\kappa}$-regular, and $\overline{\kappa}$-Urysohn. 
\end{proof}

Let us recall that a topological space $X$ is {\em extremally disconnected} if the closure of any open set is open. It is well-known that a topological space is extremally disconnected if and only if it contains no {\em butterflies}, i.e., pairs $(U,V)$ of disjoint open sets  such that $\overline{U}\cap\overline{V}\ne\emptyset$. A butterfly $(U,V)$ in $X$ is called a ({\em long}) {\em $\kappa$-butterfly} if there exist subsets $A\subset U$, $B\subset V$  of cardinality $\le\kappa$ such that $\overline{A}\cap\overline{B}\ne\emptyset$ (and the space $\overline{A}\cap\overline{B}$ is not compact).

\begin{theorem}\label{t:ex1} Let $X$ be a $T_1$-space and $Y$ be a Hausdorff $M$-compact space. Assume that either the space $Y$ is $\overline{\kappa}$-Urysohn or the space $X$ contains no long $\kappa$-butterflies.
Then for any continuous function $f:X\to Y$ there exists a unique $[M]$-continuous function $\bar f:\WM X\to Y$ such that $f=\bar f{\restriction}X$.
\end{theorem}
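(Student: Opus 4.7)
The plan is to define $\bar f$ pointwise using the identity $\WM X=\cl^1_{[M]}X$ supplied by Lemma~\ref{l:clM}: for each $\bar x\in\WM X$ pick $p\in[M]$ and $x\colon\kappa\to X$ with $\bar x\in\lim_p(j_X\circ x)$ in $\Wallman X$, observe that $Y$ is both $[M]$-Hausdorff (immediate from Hausdorffness) and $[M]$-compact (Lemma~\ref{l:[M]}), so that $\lim_p(f\circ x)\subset Y$ is a singleton, and declare $\bar f(\bar x)$ to be its unique element. The relation $\bar f{\restriction}X=f$ then follows by evaluating the definition on constant $\kappa$-sequences, and uniqueness of any $[M]$-continuous extension is forced by Hausdorffness of $Y$ together with the fact that $[M]$-continuous maps send $p$-limits into $p$-limits.

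The main obstacle is well-definedness of $\bar f$. Assume $\bar x=\lim_p(j_X\circ x)=\lim_q(j_X\circ z)$ and candidate values $y_1\ne y_2$ in $Y$. Pick disjoint open neighborhoods $V_1\ni y_1$, $V_2\ni y_2$, and, in the $\overline{\kappa}$-Urysohn case only, refine them against $C=f(x(\kappa))\cup f(z(\kappa))$ so that $\overline{V_1\cap C}\cap\overline{V_2\cap C}=\emptyset$. Put $A':=x(\kappa)\cap f^{-1}(V_1)$ and $B':=z(\kappa)\cap f^{-1}(V_2)$, both of cardinality $\le\kappa$. The relation $j_X^{-1}(\langle V\rangle)=V$ quickly gives $\bar x\in\overline{j_X(A')}\cap\overline{j_X(B')}$ in $\Wallman X$; combining this with the standard description of closures in the Wallman extension (an ultrafilter $\F$ lies in $\overline{j_X(S)}$ iff the closure of $S$ in $X$ belongs to $\F$), the closures of $A'$ and $B'$ in $X$ both lie in $\bar x$, so their intersection is a nonempty closed subset of $X$.

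The two hypotheses now furnish the contradiction. In the $\overline{\kappa}$-Urysohn case, continuity of $f$ and the choice of $V_1,V_2$ yield
\[
f\bigl(\overline{A'}\cap\overline{B'}\bigr)\subset\overline{f(A')}\cap\overline{f(B')}\subset\overline{V_1\cap C}\cap\overline{V_2\cap C}=\emptyset,
\]
contradicting nonemptiness of $\overline{A'}\cap\overline{B'}$. In the no-long-$\kappa$-butterflies case, the disjoint opens $(f^{-1}(V_1),f^{-1}(V_2))$ together with $(A',B')$ form a $\kappa$-butterfly, so $K:=\overline{A'}\cap\overline{B'}$ is compact. A compact set belonging to a closed ultrafilter forces principality: the family $\{F\cap K:F\in\bar x\}$ has the finite intersection property in the compact space $K$, producing $k\in K$ with $\{k\}\in\bar x$, i.e.\ $\bar x=j_X(k)$ and $k=\lim_p x=\lim_q z$ in $X$; continuity of $f$ then forces $y_1=f(k)=y_2$, a contradiction.

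For $[M]$-continuity, given $u\colon\kappa\to\WM X$ and $p\in[M]$ with $\bar u\in\lim_p u$, represent each $u(\alpha)=\lim_{p_\alpha}(j_X\circ x_\alpha)$, fix a bijection $b\colon\kappa\to\kappa\times\kappa$, glue the $x_\alpha$'s into a single $\tilde x\colon\kappa\to X$ by $\tilde x(b^{-1}(\alpha,\gamma))=x_\alpha(\gamma)$, push each $p_\alpha$ through the continuous extension $\bar f_\alpha\colon\beta\kappa\to\beta\kappa$ of $\gamma\mapsto b^{-1}(\alpha,\gamma)$ to obtain $q_\alpha\in[M]$ (Lemma~\ref{l:Minv}), and diagonalize via $\tilde p:=\{R\subset\kappa:\{\alpha\in\kappa:R\in q_\alpha\}\in p\}$, which lies in $[M]$ because $[M]=\cl_{[M]}[M]$ (see the opening of Section~\ref{s:WM}). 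The computation from the proof of Lemma~\ref{l:clM} shows $u(\alpha)=\lim_{q_\alpha}(j_X\circ\tilde x)$ and $\bar u=\lim_{\tilde p}(j_X\circ\tilde x)$, so $\bar f(\bar u)$ is the unique $\tilde p$-limit of $f\circ\tilde x$ in $Y$; unpacking the definition of $\tilde p$ and using $\bar f(u(\alpha))=\lim_{q_\alpha}(f\circ\tilde x)$, the unique $p$-limit $y^*$ of $\bar f\circ u$ (existing by $[M]$-compactness of $Y$) is also a $\tilde p$-limit of $f\circ\tilde x$, whence $y^*=\bar f(\bar u)$ by $[M]$-Hausdorffness of $Y$.
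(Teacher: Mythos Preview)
Your proof is correct and follows essentially the same route as the paper's: you define $\bar f$ pointwise via the representation $\WM X=\cl^1_{[M]}X$, prove well-definedness by the same two-case contradiction argument (the $\overline{\kappa}$-Urysohn refinement versus the compactness of $\overline{A'}\cap\overline{B'}$ forcing principality), and establish $[M]$-continuity via the same amalgamation trick with a bijection $b:\kappa\to\kappa\times\kappa$ and a diagonalized ultrafilter in $[M]$. The only cosmetic difference is that in the $[M]$-continuity step you argue directly that the $p$-limit $y^*$ of $\bar f\circ u$ is a $\tilde p$-limit of $f\circ\tilde x$ and then invoke Hausdorffness, whereas the paper argues by contradiction assuming $y^*\ne\bar f(\bar u)$; these are the same computation rearranged.
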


\begin{proof} We identify $X$ with its image $j_X(X)$ in $\Wallman X$. By Lemma~\ref{l:clM}, $\WM X=\cl_{[M]} X=\cl_{[M]}^1X$. 
Given any point $\bar x\in \WM X=\cl_{[M]}^1 X$, consider the family $\Pi_{\bar x}=\{(p,x)\in [M]\times X^\kappa:\bar x\in \lim_p x\}$.
For every $(p,x)\in \Pi_{\bar x}$, let $\lim_p (f\circ x)$ be the set of $p$-limit points of the $\kappa$-sequence $f\circ x$ in $Y$. By the Hausdorff property of $Y$, the set $\lim_p (f\circ x)$ contains at most one point. By Lemma~\ref{l:[M]}, the $M$-compact space $Y$ is $[M]$-compact, which implies that the set $\lim_p(f\circ x)$ is not empty and hence is a singleton. 

\begin{claim}\label{cl:coincide} For any $(p,x),(q,y)\in \Pi_{\bar x}$ the singletons $\lim_p (f\circ x)$ and $\lim_q (f\circ y)$ coincide. 
\end{claim}

\begin{proof} To derive a contradiction, assume that   $\lim_p (f\circ x)\ne \lim_q (f\circ y)$. Since $Y$ is Hausdorff, there are two disjoint open sets $V,W$ in $Y$ such that $\lim_p (f\circ x)\subset V$ and $\lim_q (f\circ y)\subset W$. 
Then $P:=\{\alpha\in\kappa:x(\alpha)\in f^{-1}(V)\}\in P$ and $Q:=\{\alpha\in\kappa:y(\alpha)\in f^{-1}(W)\}\in q$. The inclusion $\bar x\in\lim_p x\cap \lim_q y$ implies that the sets $\overline{x(P)},\overline{y(Q)}$ belong to the ultrafilter $\bar x$ and hence the set $E=\overline{x(P)}\cap \overline{y(Q)}\in \bar x$ is not empty. This means that $(f^{-1}(V),f^{-1}(W))$ is a $\kappa$-batterfly in $X$. 

Now we consider two cases.
\smallskip

1. The space $X$ contains no long $\kappa$-butterflies. In this case the set $E=\overline{x(P)}\cap\overline{y(Q)}\in\bar x$ is compact (otherwise, the pair $(f^{-1}(V),f^{-1}(W))$ would be a long butterfly in $X$). The compactness of $E\in \bar x$ guarantees that the set $\bigcap\bar x\subset E$ contains some point and the ultrafilter $\bar x$ coincides with the principal ultrafilter supported by this  point.
 Then $\lim_p(f\circ x)=\{f(\bar x)\}=\lim_q(f\circ y)$ by the Hausdorff property of $Y$ and continuity of $f$.
\smallskip

2. The space $Y$ is $\overline{\kappa}$-Urysohn. In this case we can replace $V,W$ 
by smaller open sets and assume that $\overline{V\cap (f\circ x(\kappa))}\cap \overline{W\cap (f\circ y(\kappa))}=\emptyset$. Then $f(E)\subset f(\overline{x(P)})\cap f(\overline{y(Q)})\subset \overline{f\circ x(P)}\cap \overline{f\circ y(Q)}\subset \overline{V\cap (f\circ x(\kappa))}\cap\overline{W\cap (f\circ y(\kappa))}=\emptyset$ and $\emptyset= E\in\bar x$, which contradicts the definition of a filter.
\end{proof}

Claim~\ref{cl:coincide} allows us to define the function $\bar f:\WM X\to Y$ assigning to each element $\bar x\in\WM X$ the unique element of the singleton $\bigcup_{(p,x)\in\Pi_{\bar x}}\lim_p x$.

\begin{claim} $\bar f{\restriction}X=f$.
\end{claim}

\begin{proof} For any $\bar x\in X$, consider the constant $\kappa$-sequence $x:\kappa\to\{\bar x\}$ and take any $p\in M$. Then $\bar x\in \lim_p x$ and $\{\bar f(\bar x)\}=\lim_p(f\circ x)=\{f(\bar x)\}$ as the $\kappa$-sequence $f\circ x:\kappa\to \{f(\bar x)\}$ is constant. Consequently, $\bar f(\bar x)=f(\bar x)$.
\end{proof}

\begin{claim} The function $\bar f$ is $[M]$-continuous. 
\end{claim}

\begin{proof} Given any ultrafilter $p\in[M]$ and  $\kappa$-sequence $x:\kappa\to\WM X$ that $p$-converges to a point $\bar x\in\WM X$, we should prove that $\bar f(\bar x)\in\lim_p (\bar f\circ x)$.

Since the space $Y$ is Hausdorff and $[M]$-compact (see Lemma~\ref{l:[M]}), the set $\lim_p(\bar f\circ x)$ is not empty and contains a unique point $\bar y$. We should prove that $\bar y=\bar f(\bar x)$. To derive a contradiction, assume that $\bar y\ne\bar f(\bar x)$.

For every $\alpha\in\kappa$, consider the ultrafilter $x(\alpha)\in \WM X=\cl_{[M]}^1X$ and find an ultrafilter $p_\alpha\in [M]$ and a $\kappa$-sequence $x_\alpha:\kappa\to X$ such that $x(\alpha)\in\lim_{p_\alpha}x_\alpha$. Let $b:\kappa\to\kappa\times\kappa$ be any bijection. For every $\alpha\in\kappa$ consider the function $h_\alpha:\kappa\to\kappa$, $h_\alpha:\gamma\mapsto b^{-1}(\alpha,\gamma)$.
Let $\bar h_\alpha:\beta\kappa\to\beta\kappa$ be the continuous extension of the function $h_\alpha$. By Lemma~\ref{l:Minv}, the ultrafilter $u_\alpha=\bar h_\alpha(p_\alpha)$ belongs to the set $\bar h_\alpha([M])\subset[M]$. 

Consider the $\kappa$-sequence $u:\kappa\to [M]$, $u:\alpha\mapsto u_\alpha$. Since the space $\beta\kappa$ is compact and Hausdorff, the set $\lim_p u$ contains a unique ultrafilter $r$, which belongs to the set $$\cl_{[M]}([M])=\cl_M([M])=\cl_M(\cl_M\kappa)=\cl_M\kappa=[M],$$
see Lemma~\ref{l:clM}.

 Since the functions $h_\alpha$, $\alpha\in\kappa$, have disjoint ranges, we can define a $\kappa$-sequence $z:\kappa\to X$ such that $z\circ h_\alpha(\gamma)=x_\alpha(\gamma)$ for every $\gamma\in \kappa$. 

\begin{claim}\label{cl:xa=luz} For every $\alpha\in\kappa$, \ $x(\alpha)\in \lim_{u_\alpha}z.$
\end{claim}

\begin{proof} Given any neighborhood $U\subset X$ of $x(\alpha)$, we should prove that the set $\{\beta\in\kappa:z(\beta)\in U\}$ belongs to the ultrafilter $u_\alpha$. Since $x(\alpha)\in U\cap\lim_{p_\alpha}x_\alpha$, the set $P=\{\gamma\in\kappa:x_\alpha(\gamma)\in U\}$ belongs to the ultrafilter $p_\alpha$. 

By the definition of the function $z$, for any $\gamma\in\kappa$ we have $z(h_\alpha(\gamma))=z(b^{-1}(\alpha,\gamma))=x_\alpha(\gamma)$. Consequently,
$$
\begin{aligned}
\{\beta\in\kappa:z(\beta)\in U\}\supset&\;\{h_\alpha(\gamma):\gamma\in\kappa,\;x_\alpha(\gamma)\in U\}=\\
&f_\alpha(\{\gamma\in\kappa:x_\alpha(\gamma)\in U\})=h_\alpha(P_\alpha)\in\bar f_\alpha(p_\alpha)=u_\alpha.
\end{aligned}
$$
\end{proof}  

\begin{claim}\label{cl:px=rz} $\bar x\in\lim_r z$.
\end{claim}

\begin{proof} Given any open neighborhood $U\subset X$ of $\bar x$, we need to prove that the set $\{\gamma\in\kappa:z(\gamma)\in U\}$ belongs to the ultrafilter $r$.

Since $\bar x\in U\cap\lim_p x$, the set $P=\{\alpha\in\kappa:x(\alpha)\in U\}$ belongs to the ultrafilter $p$. For every $\alpha\in P$ we have $x(\alpha)\in U\cap\lim_{u_\alpha}z$ (see Claim~\ref{cl:xa=luz}) and hence $U_\alpha:=\{\gamma\in\kappa:z(\gamma)\in V\}\in u_\alpha$. By the definition of the ultrafilter $r\in\lim_p u_\alpha$, the set $\bigcup_{\alpha\in P}U_\alpha$ belongs to $r$.
Then the set $\{\gamma\in\kappa:z(\gamma)\in U\}\supset \bigcup_{\alpha\in P}U_\alpha$ belongs to the ultrafilter $r$, too.
\end{proof} 

By Claim~\ref{cl:px=rz}, $(r,z)\in \Pi_{\bar x}$. Then $\lim_r(f\circ z)=\{\bar f(\bar x)\}\ne \{\bar y\}$. By the Hausdorff property of $Y$, there are two disjoint open sets $V,W$ in $Y$ such that $\lim_r(f\circ z)\subset V$ and $\lim_p(\bar f\circ x)=\{\bar y\}\subset W$. Then the set $R:=\{\gamma\in\kappa:f(z(\gamma))\in V\}$ belongs to the ultrafilter $r$ and the set $P=\{\alpha\in\kappa:\bar f(x(\alpha))\in W\}$ belongs to $p$. By the definition of the ultrafilter $r\in \lim_p u\in\langle R\rangle$, the set $P'=\{\alpha\in \kappa:R\in u_\alpha\}=\{\alpha\in\kappa:u_\alpha\in\langle R\rangle\}$ belongs to the ultrafilter $p$. Fix any ordinal $\alpha\in P\cap P'\in p$. By Claim~\ref{cl:xa=luz},  $x(\alpha)\in \lim_{u_\alpha} z$. Consequently, $(u_\alpha,z)\in\Pi_{x(\alpha)}$ and $\bar f(x(\alpha))\in \lim_{u_\alpha}(f\circ z)$ by the definition of $\bar f$. Now $\alpha\in P$ ensures that $\bar f(x(\alpha))\in W\cap \lim_{u_\alpha}(f\circ z)$ and hence the set $U_\alpha=\{\gamma\in\kappa:f\circ z(\gamma)\in W\}$ belongs to the ultrafilter $u_\alpha$. On the other hand, the inclusion $\alpha\in P'$ guarantees that $R\in u_\alpha$. Take any ordinal $\gamma\in U_\alpha\cap R$. Then $f(z(\gamma))\in W\cap V=\emptyset$, which is a desired contradiction finishing the proof of the equality $\bar f(\bar x)=\bar y$ and the proof of the $[M]$-continuity of the function $\bar f$. 
\end{proof}

Finally, we show that the function $\bar f$ is unique. Assume that $\tilde f:\WM X\to Y$ is another $[M]$-continuous function extending the function $f$. Given any point $\bar x\in\WM X$, we should prove that $\bar f(\bar x)=\tilde f(\bar x)$. Since $\bar x\in\WM X=\cl_{[M]}^1X$, there exists an ultrafilter $p\in[M]$ and a $\kappa$-sequence $x:\kappa\to X$ such that $\bar x\in\lim_p x$. The $[M]$-continuity of the functions $\bar f$ and $\tilde f$ guarantees that $\{\bar f(\bar x),\tilde f(\bar x)\}\subset \lim_p(f\circ x)$. Since $Y$ is Hausdorff the set $\lim_p(f\circ x)$ contains at most one point and hence $\bar f(\bar x)=\tilde f(\bar x)$.
\end{proof}

\begin{corollary}\label{c:rex} For any continuous function $f:X\to Y$ from a $T_1$-space $X$ to a regular $M$-compact $T_1$-space $Y$ there exists a unique continuous function $\bar f:\WM X\to Y$ such that $f=\bar f{\restriction}X$.
\end{corollary}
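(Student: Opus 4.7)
The plan is to deduce this corollary from Theorem~\ref{t:ex1} by upgrading $[M]$-continuity to ordinary continuity. Since $Y$ is regular and $T_1$, it is Hausdorff and $\overline{\kappa}$-regular, hence $\overline{\kappa}$-Urysohn by Lemma~\ref{l:Ur}; so Theorem~\ref{t:ex1} supplies a unique $[M]$-continuous map $\bar f:\WM X\to Y$ with $\bar f{\restriction}X=f$. Uniqueness of $\bar f$ as a \emph{continuous} extension would then follow automatically from the density of $X$ in $\WM X$ and the Hausdorffness of $Y$, so the real task is to show that this particular $\bar f$ is continuous.

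To that end, I would fix $\bar x\in\WM X$ and an open neighborhood $V$ of $\bar f(\bar x)$, and apply the regularity of $Y$ twice to interpose open sets $V'',V'$ with $\bar f(\bar x)\in V''\subset\overline{V''}\subset V'\subset\overline{V'}\subset V$. Using $\WM X=\cl_{[M]}^1X$ from Lemma~\ref{l:clM}, I can write $\bar x\in\lim_p x$ for some $p\in[M]$ and $x:\kappa\to X$; then $[M]$-continuity of $\bar f$ gives $\bar f(\bar x)\in\lim_p(f\circ x)$, and consequently $S:=\{\alpha\in\kappa:f(x(\alpha))\in V''\}\in p$.

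The next step is to produce a concrete basic open neighborhood of $\bar x$ whose $\bar f$-image lies in $V$. I would take $F:=\overline{x(S)}$; by continuity of $f$ and $\overline{V''}\subset V'$ this satisfies $F\subset f^{-1}(\overline{V''})\subset f^{-1}(V')$. To show that $F$ belongs to the closed ultrafilter $\bar x$, I would argue by contradiction via the maximality clause in the definition of a closed ultrafilter: if $F\notin\bar x$, then some $F'\in\bar x$ is disjoint from $F$, so $F'\subset X\setminus F$ and therefore $\bar x\in\langle X\setminus F\rangle$; the $p$-convergence of $x$ to $\bar x$ then forces $\{\alpha:x(\alpha)\notin F\}\in p$, contradicting $S\in p$ and $x(S)\subset F$. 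Hence $\bar x\in\langle f^{-1}(V')\rangle\cap\WM X$, a basic open neighborhood of $\bar x$ in $\WM X$.

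Finally, to verify that $\bar f$ sends this neighborhood into $V$, I would take any $\F\in\langle f^{-1}(V')\rangle\cap\WM X$, write $\F\in\lim_q y$ with $q\in[M]$ and $y:\kappa\to X$, and observe that $\F\in\langle f^{-1}(V')\rangle$ together with $q$-convergence yields $\{\alpha:y(\alpha)\in f^{-1}(V')\}\in q$; then $[M]$-continuity of $\bar f$ gives $\bar f(\F)\in\lim_q(f\circ y)\subset\overline{V'}\subset V$. The main obstacle I anticipate is the step $F\in\bar x$: it is the essential bridge from abstract $p$-convergence inside $\WM X$ to concrete membership in the closed ultrafilter $\bar x$, and it is exactly where the Wallman structure (the basic opens $\langle U\rangle$ together with closed-ultrafilter maximality) must be brought in.
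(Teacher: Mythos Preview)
Your proposal is correct and follows essentially the same route as the paper's proof: obtain the $[M]$-continuous extension $\bar f$ from Theorem~\ref{t:ex1}, use regularity of $Y$ to nest two open sets, exploit $\WM X=\cl_{[M]}^1X$ and $[M]$-continuity to show $\bar x$ lies in a basic open $\langle f^{-1}(V')\rangle$, and then verify this basic open is mapped into the target neighborhood. Your argument that $F=\overline{x(S)}\in\bar x$ via closed-ultrafilter maximality is just a more explicit version of the paper's one-line claim ``$x(P)\subset f^{-1}(V)$ and $\bar x\in\lim_p x$ imply $\overline{f^{-1}(V)}\in\bar x$'', and your observation that regular $T_1$ implies $\overline{\kappa}$-Urysohn (needed to invoke Theorem~\ref{t:ex1}) fills a gap the paper leaves implicit.
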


\begin{proof} By Theorem~\ref{t:ex1}, there exists an $[M]$-continuous function $\bar f:\WM X\to Y$ such that $f=\bar f\circ j_X$. To check the continuity of $f$, take any ultrafilter $\bar x\in \WM X$ and any open neighborhood $U\subset Y$ of $\bar f(\bar x)$. By the regularity of $Y$, there exist open sets $V$ and $W$ in $Y$ such that  $\bar f(\bar x)\in V\subset\overline{V}\subset W\subset\overline{W}\subset U$. By Proposition~\ref{l:clM}, $\WM X=\cl_{[M]}X=\cl_{[M]}^1X$. Consequently, we can find an ultrafilter $p\in[M]$ and a $\kappa$-sequence $x:\kappa\to X$ such that $\bar x\in \lim_p x$. The $[M]$-continuity of $\bar f$ ensures that $\bar f(\bar x)\in\lim_p(f\circ x)$. It follows from $\bar f(\bar x)\in V\cap\lim_p(f\circ x)$ that the set $P=\{\alpha\in \kappa:f(x(\alpha))\in V\}$ belongs to the ultrafilter $p$. Then $x(P)\subset f^{-1}(V)$ and $\bar x\in \lim_p x$ imply $\overline{f^{-1}(V)}\in \bar x$ and hence $\bar x\in \langle f^{-1}(W)\rangle$. It remains to prove that $\bar f(\langle f^{-1}(W)\rangle)\subset \overline{W}\subset U$. Take any ultrafilter $\bar z\in \langle f^{-1}(W)\rangle$. Since $\WM X=\cl_{[M]}^1X$, there exists an ultrafilter $q\in[M]$ and a $\kappa$-sequence $z:\kappa\to X$ such that $\bar z\in\lim_q z$. Since $\bar z\in\langle f^{-1}(W)\rangle\cap \lim_q z$, the set $Q=\{\alpha\in\kappa:z(\alpha)\in f^{-1}(W)\}$ belongs to the ultrafilter $q$. The $[M]$-continuity of $\bar f$ ensures that $\bar f(\bar z)\in\lim_q(f\circ z)\subset \overline{f(z(Q))}\subset \overline{f(f^{-1}(W))}\subset\overline{W}\subset U$. Therefore, the map $\bar f$ is continuous. The uniqueness of $\bar f$ follows from the density of $X=j_X(X)$ in $\WM X$.
\end{proof} 

\begin{corollary} For any normal $T_1$-space $X$ the pair $(\WM X,j_X)$ is a reflection of $X$ in the class  of regular $M$-compact $T_1$-spaces.
\end{corollary}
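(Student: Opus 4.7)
The plan is to verify that $(\WM X, j_X)$ satisfies the two requirements of a reflection: first, that $\WM X$ itself lies in the target class of regular $M$-compact $T_1$-spaces, and second, the universal extension property, which will be obtained as an immediate application of Corollary~\ref{c:rex}.

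For the first requirement, I would argue as follows. Since $X$ is a normal $T_1$-space, Proposition~\ref{p:W-normal} gives that $\Wallman X$ is Hausdorff, and by Proposition~\ref{p:W-comp} it is compact; hence $\Wallman X$ is a compact Hausdorff space, and in particular regular and $T_1$. Being a subspace of a compact Hausdorff space, $\WM X$ inherits the $T_1$ and the regularity properties. For $M$-compactness, Proposition~\ref{p:clM} states that $\WM X$ is $[M]$-compact, and since $M \subset [M]$ (as noted immediately before Lemma~\ref{l:Minv}), $[M]$-compactness trivially implies $M$-compactness. Thus $\WM X$ is a regular $M$-compact $T_1$-space. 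The map $j_X: X \to \WM X$ is continuous because $j_X: X \to \Wallman X$ is a topological embedding (see Section~\ref{s:Wallman}) and $\WM X$ carries the subspace topology.

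For the universal property, let $f: X \to Y$ be any continuous map into a regular $M$-compact $T_1$-space $Y$. Since $Y$ is regular $T_1$, it is Urysohn in the classical sense, and from this one checks directly that $Y$ is $\overline{\kappa}$-Urysohn (if $x \ne y$ pick disjoint open $V, W$ with $\overline V \cap \overline W = \emptyset$; then $\overline{V \cap C} \cap \overline{W \cap C} \subset \overline V \cap \overline W = \emptyset$ for every $C \subset Y$). In particular, the hypotheses of Corollary~\ref{c:rex} are met, yielding a unique continuous map $\bar f: \WM X \to Y$ with $f = \bar f \restriction X = \bar f \circ j_X$. This is exactly the universal property defining the reflection, so $(\WM X, j_X)$ is a reflection of $X$ in the class of regular $M$-compact $T_1$-spaces.

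I do not expect any genuine obstacle: the statement is essentially a repackaging of Corollary~\ref{c:rex} together with Proposition~\ref{p:clM} and Proposition~\ref{p:W-normal}. The only mildly delicate point is to observe that a regular $T_1$-space is automatically $\overline{\kappa}$-Urysohn, which is needed in order to invoke Corollary~\ref{c:rex} (whose own proof ultimately rests on Theorem~\ref{t:ex1}, where this hypothesis appears explicitly). All remaining verifications are bookkeeping within the already-developed framework.
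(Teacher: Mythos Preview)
Your proposal is correct and follows essentially the same route as the paper: verify via Propositions~\ref{p:W-normal}, \ref{p:W-comp}, and \ref{p:clM} that $\WM X$ is a regular $M$-compact $T_1$-space, then invoke Corollary~\ref{c:rex} for the universal property. The only difference is that your digression into showing $Y$ is $\overline{\kappa}$-Urysohn is unnecessary, since Corollary~\ref{c:rex} already applies directly to regular $M$-compact $T_1$-spaces; the paper simply cites that corollary without re-justifying its hypotheses.
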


\begin{proof} By Proposition~\ref{p:W-normal}, the Wallman extension $\Wallman X$ of the normal $T_1$-space $X$ is a compact Hausdorff space. Then the subspace $\WM X$ of $\Wallman X$ is Tychonoff and hence regular. By Proposition~\ref{p:clM}, the space $\WM X$ is $M$-compact. Now Corollary~\ref{c:rex} implies that  $(\WM X,j_X)$ is a reflection of $X$ in the class of regular $M$-compact $T_1$-spaces.
\end{proof}

\section{The Hausdorff $\kappa$-bounded reflection of a topological space}\label{s:main-k}

Let $\kappa$ be an infinite cardinal. It is easy to see that the class $\mathsf H_k$ of Hausdorff $\kappa$-bounded spaces is productive, closed-hereditary and topological. 

By Theorem~\ref{t:Kak}, each topological space $X$ has a unique $\mathsf H_\kappa$-reflection $(\mathsf H_\kappa X,i_X)$. In this section we show that for a $\overline{\kappa}$-normal $T_1$-space $X$ its $\mathsf H_\kappa$-reflection can be identified with the Wallman $\beta\kappa$-compact extension $\Wallman_{\!\beta\kappa}X$ endowed with a suitable topology $\tau^\star$. By Proposition~\ref{p:Wbk}, $\Wallman_{\!\beta\kappa}X=\Wk X$ where $\Wk X=\bigcup\{\overline{j_X(C)}:C\subset X,\;|C|\le\kappa\}$ and the closure of $j_X(C)$ is taken in $\Wallman X$.

For a $T_1$-space $X$ let $\tau^\star$ be the topology on $\Wallman_{\!\beta\kappa} X$ consisting of the sets $U\subset\Wallman_{\!\beta\kappa}  X$ such that
\begin{itemize}
\item $j_X^{-1}(U)$ is open in $X$ and
\item for every subset subset $C\subset X$ of cardinality $\le\kappa$, the set $\overline{j_X(C)}\setminus U$ is closed in $\Wk X$.
\end{itemize}
 Denote by $\Wkstar X$ the topological space $(\Wallman_{\!\beta\kappa} X,\tau^\star)$.  

The following theorem describes some properties of the space $\Wkstar X$.
 

\begin{theorem}\label{t:main-k} For any $T_1$-space $X$ the pair $(\Wkstar X,j_X)$ has the following properties:
\begin{enumerate}
\item the identity map $\Wkstar X\to\Wk X$ is continuous;
\item the map $j_X:X\to\Wkstar X$ is a topological embedding;
\item the set $j_X(X)$ is dense in $\Wkstar X$;
\item for any subspace $Z\subset \Wallman_{\!\beta\kappa} X$ of density $d(Z)\le\kappa$ the identity inclusion $Z\to \Wkstar X$ is a topological embedding;
\item the space $\Wkstar X$ is $\kappa$-bounded;
\item for any continuous map $f:X\to Y$ to a Hausdorff $\kappa$-bounded space $Y$ there exists a unique continuous map $\bar f:\Wkstar X\to Y$ such that $f=\bar f\circ j_X$;
\item the pair $(\Wkstar X,j_X)$ is a reflection of $X$ in the class of Hausdorff $\kappa$-bounded spaces if and only if the space $\Wkstar X$ is Hausdorff;
\item If $X$ is $\overline{\kappa}$-normal, then $(\Wkstar X,j_X)$ is a reflection of $X$ in the class of Hausdorff $\kappa$-bounded spaces.
\end{enumerate}
\end{theorem}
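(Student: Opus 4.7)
The plan rests on two structural observations about $\tau^\star$. First, every Wallman basic open set $\langle V\rangle\cap\Wk X$ (with $V$ open in $X$) satisfies both defining conditions of $\tau^\star$, so $\tau^\star$ refines the subspace topology of $\Wk X$; this yields (1) at once. Second, the second defining condition is designed so that for every $C\subset X$ with $|C|\le\kappa$ the set $\overline{j_X(C)}$ is itself closed in $\Wkstar X$ (its complement has open $j_X$-preimage $X\setminus\overline{C}$, and meets each $\overline{j_X(C')}$ in a closed subset of $\Wk X$), and the subspace topology on $\overline{j_X(C)}$ induced from $\Wkstar X$ coincides with that induced from $\Wk X$, hence from $\Wallman X$; in particular each such $\overline{j_X(C)}$ is compact in $\Wkstar X$.

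With these in hand, (2)--(5) go quickly. For (2), the first defining condition sends $j_X^{-1}(\tau^\star)$ into the topology of $X$, while by (1) every open $V\subset X$ is recovered as $j_X^{-1}(\langle V\rangle\cap\Wk X)$ (using $T_1$-ness via $F=\{x\}$), so $j_X$ is a topological embedding. For (3), any nonempty $U\in\tau^\star$ contains some $\bar x\in\overline{j_X(C_0)}$ for a suitable small $C_0$, and $U\cap\overline{j_X(C_0)}$ is relatively open and nonempty, hence meets the dense $j_X(C_0)\subset j_X(X)$. For (4), given $Z\subset\Wk X$ with $d(Z)\le\kappa$, pick a dense $D\subset Z$ of cardinality $\le\kappa$ and for each $d\in D$ a set $C_d\subset X$ of cardinality $\le\kappa$ with $d\in\overline{j_X(C_d)}$; then $C=\bigcup_{d\in D}C_d$ satisfies $|C|\le\kappa$ and $Z\subset\overline{j_X(C)}$, on which the two topologies coincide. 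For (5), the same enclosure puts any $A$ of cardinality $\le\kappa$ inside the compact closed set $\overline{j_X(C)}$, so the $\Wkstar X$-closure of $A$ is a closed (hence compact) subset of it.

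The heart of the argument is (6). Given continuous $f:X\to Y$ with $Y$ Hausdorff and $\kappa$-bounded, specialise $M=\beta\kappa$; then $[M]=\beta\kappa$ and $\WM X=\Wk X$ by Proposition~\ref{p:Wbk}. Since $Y$ is $\overline{\kappa}$-normal by Proposition~\ref{p:cn}, hence $\overline{\kappa}$-regular and (by Lemma~\ref{l:Ur}) $\overline{\kappa}$-Urysohn, and $\beta\kappa$-compact by Theorem~\ref{t:kb=kr+bk}, the first alternative of Theorem~\ref{t:ex1} yields a unique $\beta\kappa$-continuous $\bar f:\Wk X\to Y$ extending $f$. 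To upgrade $\bar f$ to a $\tau^\star$-continuous map, fix any open $U\subset Y$: the first defining condition for $\bar f^{-1}(U)\in\tau^\star$ is just continuity of $f$, and for the second, each $\overline{j_X(C)}$ with $|C|\le\kappa$ has density $\le\kappa$, so Proposition~\ref{p:cont-d} applied to the $\beta\kappa$-continuous $\bar f$ and the $\overline{\kappa}$-regular $Y$ makes $\bar f{\restriction}\overline{j_X(C)}$ continuous; hence $\overline{j_X(C)}\setminus\bar f^{-1}(U)$ is closed in $\overline{j_X(C)}$ and thus in $\Wk X$. Uniqueness of $\bar f$ follows from (3) together with the Hausdorffness of $Y$.

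Finally, (7) is formal: any reflection in $\Hk$ must itself be Hausdorff, and conversely (5) and (6) make any Hausdorff $\Wkstar X$ a reflection. For (8), Proposition~\ref{p:WH} gives Hausdorffness of $\Wk X$ when $X$ is $\overline{\kappa}$-normal, and this is inherited by the finer topology $\tau^\star$. The decisive obstacle is step (6): Theorem~\ref{t:ex1} delivers only $\beta\kappa$-continuity, and the topology $\tau^\star$ is engineered so that continuity need only be checked on each $\overline{j_X(C)}$, a set of density $\le\kappa$ on which Proposition~\ref{p:cont-d} upgrades $\beta\kappa$-continuity to genuine continuity.
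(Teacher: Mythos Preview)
Your proof is correct and follows essentially the same approach as the paper: both use that $\tau^\star$ refines the Wallman subspace topology for (1), deduce (2)--(5) from the fact that each $\overline{j_X(C)}$ carries the same topology under $\tau^\star$ as under $\Wk X$, and for (6) invoke Theorem~\ref{t:ex1} (via the $\overline{\kappa}$-Urysohn property of $Y$) to get a $\beta\kappa$-continuous extension and then Proposition~\ref{p:cont-d} on each $\overline{j_X(C)}$ to verify the second $\tau^\star$-condition. Your explicit front-loading of the two structural observations about $\tau^\star$ (in particular that each $\overline{j_X(C)}$ is $\tau^\star$-closed) is a nice organizational touch, but the substance is identical.
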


\begin{proof} 1. The definition of the topology $\tau^\star$ on $\Wallman_{\!\beta\kappa} X$ implies that this topology includes the original topology of $\Wallman_{\!\beta\kappa} X$, which means that the identity map $\Wkstar X\to \Wallman_{\!\beta\kappa} X$ is continuous.
\smallskip

2. The definition of the topology $\tau^\star$ guarantees that the map $j_X:X\to\Wkstar X$ is continuous. Taking into account that the map $j_X:X\to\Wallman_{\!\beta\kappa} X$ is a topological embedding and the identity map $\Wkstar X\to \Wallman_{\!\beta\kappa}  X$ is continuous, we conclude that the continuous map $j_X:X\to\Wkstar X$ is a topological embedding.
\smallskip

3.  To see that the space $j_X(X)$ is dense in $\Wkstar X$, take any element $u\in\Wkstar X$ and using Proposition~\ref{p:Wbk}, find a set $C\subset X$ of cardinality $|C|\le\kappa$ such that $u\in\overline{j_X(C)}$ in $\Wallman X$. By the definition of the topology $\tau^\star$, for every open neighborhood $U$ of $u$ in the space $\Wkstar X$, the set $\overline{j_X(C)}\setminus U$ is closed in $\overline{j_X(C)}\subset \Wallman X$ and hence $\overline{j_X(C)}\cap U$ is an open neighborhood of $u$ in $\overline{j_X(C)}\subset\Wallman X$. By the density of $j_X(C)$ in $\overline{j_X(C)}$ the intersection $(U\cap\overline{j_X(C)})\cap j_X(C)=U\cap j_X(C)\subset U\cap j_X(X)$ is not empty, witnessing that $j_X(X)$ is dense in $\Wkstar X$.
\smallskip

4.  Take any subspace $Z\subset \Wallman_{\!\beta\kappa} X$ of density $d(Z)\le \kappa$ and choose a dense subset $D$ in $Z$ of cardinality $|D|=d(Z)\le\kappa$. By Proposition~\ref{p:Wbk}, for every $u\in D$ there exists a set $C_u\subset X$ of cardinality $|C_u|\le\kappa$ such that $u\in\overline{j_X(C_u)}$ in $\Wallman_{\!\beta\kappa}  X$. The union $C=\bigcup_{u\in D}C_u$ has cardinality $|C|\le\kappa$ and $D\subset\overline{j_X(C)}$. Then also $Z\subset\overline{D}\subset\overline{j_X(C)}$. By the definition of the topology $\tau^\star$, for any open set $U\subset\Wkstar X$ the set $\overline{j_X(C)}\setminus U$ is closed in $\Wallman X$ and hence in $\overline{j_X(C)}$. Then $\overline{j_X(C)}\cap U$ is open in $\overline{j_X(C)}$ and $Z\cap U$ is open in $Z$. This means that the identity inclusion $Z\to\Wkstar X$ is continuous. Taking into account that the identity map $\Wkstar X\to \Wallman_{\!\beta\kappa} X$ is continuous, we conclude that the identity inclusion $Z\to \Wkstar X$ is a topological embedding.
\smallskip

5. To show that the space $\Wkstar X$ is $\kappa$-bounded, take any subset $A\subset \Wkstar X$ of cardinality $|A|\le\kappa$ and let $\bar A$ be the closure of $A$ in $\Wkstar X$. By the $\kappa$-boundedness of the space $\Wallman_{\!\beta\kappa} X$ (see Proposition~\ref{p:Wbk}), the closure $[A]$ of $A$ in $\Wallman_{\!\beta\kappa} X$ is compact. By the preceding statement, the identity inclusion $[A]\to \Wkstar X$ is a topological embedding, which implies that the set $[A]$ is compact in $\Wkstar X$. The continuity of the identity map $\Wkstar X\to \Wallman_{\!\beta\kappa}  X$ implies that $\bar A\subset[A]$. Then the space $\bar A$ is compact, being a closed subset of the compact space $[A]$.
\smallskip

6. Let $f:X\to Y$ be a continuous map of $X$ to a Hausdorff $\kappa$-bounded space $Y$. By Theorem~\ref{t:kb=kr+bk}, the Hausdorff $\kappa$-bounded space $Y$ is  $\overline{\kappa}$-regular and $\beta\kappa$-compact. 
By Theorem~\ref{t:ex1}, there exists a unique $\beta\kappa$-continuous map $\bar f:\Wallman_{\!\beta\kappa} X\to Y$ such that $f=\bar f\circ j_X$. We claim that the map $\bar f:\Wkstar X\to Y$ is continuous. Given any open set $U\subset Y$, we need to show that its preimage $\bar f^{-1}(U)$ belongs to the topology $\tau^\star$. 
Observe that the set $j_X^{-1}(\bar f^{-1}(U))=(\bar f\circ j_X)^{-1}(U)=f^{-1}(U)$ is open in $X$ by the continuity of the map $f$. Next, fix any subset $C\subset X$ of cardinality $|C|\le\kappa$ and consider the compact closed subset $\overline{j_X(C)}$ in $\Wallman_{\!\beta\kappa} X$. By Proposition~\ref{p:cont-d}, the restriction $\bar f{\restriction}\overline{j_X(C)}$ is continuous. Consequently, $\bar f^{-1}(U)\cap\overline{j_X(C)}=(\bar f{\restriction}\overline{j_X(C)})^{-1}(U)$ is open in $\overline{j_X(C)}$ and $\overline{j_X(C)}\setminus \bar f^{-1}(U)$ is closed in  $\overline{j_X(C)}$. Therefore, $\bar f^{-1}(U)\in\tau^\star$ and the function $\bar f:\Wkstar X\to Y$ is continuous. The uniqueness of $\bar f$ follows from the density of $j_X(X)$ in $\Wkstar X$.
\smallskip

7. The seventh statement of Theorem~\ref{t:main-k} follows immediately from the statements (2) and (6) of this theorem.
\smallskip

8. If the $T_1$-space $X$ is $\kappa$-normal, then by Proposition~\ref{p:WH} the space $\Wallman_{\!\beta\kappa} X$ is Hausdorff and so is the space $\Wkstar X$. By the preceding statement, $(\Wkstar X,j_X)$ is a Hausdorff $\kappa$-bounded reflection of $X$. 
\end{proof}

The continuous map $\Wkstar X\to\Wallman_{\!\beta\kappa} X$ is not necessarily a homeomorphism (even for $\overline{\kappa}$-normal spaces $X$). We say that a topological space $X$ is {\em regular at a point} $x\in X$ if each neighborhood of $x$ in $X$ contains a closed neighborhood of $x$ in $X$.

\begin{example} In the Cantor cube $\{0,1\}^{\w_1}$ consider the $\sigma$-products 
$$\sigma_0=\{x\in\{0,1\}^{\w_1}:|x^{-1}(1)|<\w\}\mbox{ and }\sigma_1=\{x\in\{0,1\}^{\w_1}:|x^{-1}(0)|<\w\}.$$The subspace $X=\sigma_0\cup \sigma_1$ has the following properties:
\begin{enumerate}
\item $X$ is Tychonoff and $\overline{\w}$-normal;
\item the spaces $\Wallman_{\!\beta\w} X$ and $\Wallman^\star_{\!\beta\w} X$ are Hausdorff, $\kappa$-bounded and $\overline{\kappa}$-normal;
\item the space $\Wallman_{\!\beta\w} X$ is regular at each point of the set $j_X(X)$;
\item the space $\Wallman_{\!\beta\w}^\star X$ is not regular at any point of the set $j_X(X)$;
\item the identity function $\Wallman_{\!\beta \w}X\to\Wallman_{\!\beta\w}^\star X$ is discontinuous.
\end{enumerate}
\end{example}

\begin{proof} 1. The Tychonoff space $X$ is regular and hence $\overline{\omega}$-regular. It is easy to see that the closure $\overline C$ of any countable set $C\subset X$ is countable and hence Lindel\"of. By Proposition~\ref{p:rn}, the space $X$ is $\overline{\w}$-normal.
\smallskip

2. By Proposition~\ref{p:WH}, Theorem~\ref{t:main-k}(8) and Proposition~\ref{p:cn}, the spaces $\Wallman_{\!\beta \w}X$ and  $\Wallman_{\!\beta \w}^\star X$ are Hausdorff, $\kappa$-bounded and $\overline{\kappa}$-normal. 
\smallskip

3. Fix any point $x\in X$ and take any neighborhood $W\subset\Wallman_{\!\beta\w} X$ of $j_X(x)$. We can assume that $W$ is of basic form $W=\langle V\rangle=\{u\in\Wallman_{\!\beta\w} X:\exists F\in u\;(F\subset V)\}$ for some open neighborhood $V$ of $x$ in $X$. By the regularity of the space $X$, there exists an open neighborhood $U$ of $x$ in $X$ such that $\overline{U}\subset V$. Then $\langle U\rangle$ is a neighborhood of $j_X(x)$ in $\Wallman_{\!\beta\w} X$ such that $\overline{\langle U\rangle}\subset \langle V\rangle=W$.
\smallskip

4. Given any point $x\in \sigma_1$, we shall prove that the space $\Wallman^\star_{\!\beta\w}X$ is not regular at $j_X(x)$. Since $x\in\sigma_1$, there is an infinite ordinal $\alpha_x\in\w_1$ such that $x(\alpha)=1$ for any $\alpha\in[\alpha_x,\w_1)$. For every ordinal $\alpha\in [\alpha_x,\w_1]$ consider the function $z_\alpha:\w_1\to\{0,1\}$ such that for every $\gamma\in\w_1$ 
$$z_\alpha(\gamma)=\begin{cases}x(\gamma)&\mbox{if $\gamma\in[0,\alpha_x)$};\\
1,&\mbox{if $\gamma\in[\alpha_x,\alpha)$}\\
0,&\mbox{if $\gamma\in[\alpha,\w_1)$}.
\end{cases}$$
Consider the subspaces $L=\{z_\alpha:\alpha\in[\alpha_x,\w_1)\}$ and $\bar L=L\cup\{x\}=L\cup\{z_{\w_1}\}$ of $\{0,1\}^{\w_1}$. It is easy to see that $\bar L$  is homeomorphic to the compact Hausdorff space $[\alpha_x,\w_1]$ endowed with the interval topology. Moreover, $L\subset\Sigma_0\setminus X$ where 
$$\Sigma_0=\{x\in\{0,1\}^{\w_1}:|x^{-1}(1)|\le\w\}\mbox{ and }\Sigma_1=\{x\in\{0,1\}^{\w_1}:|x^{-1}(0)|\le \w\}.$$
Since the subspace $\Sigma=\Sigma_0\cup\Sigma_1$ of $\{0,1\}^{\w_1}$ is regular and $\w$-bounded, by Corollary~\ref{c:rex}, there exists a continuous map $f:\Wallman_{\!\beta\w} X\to\Sigma$ such that $f\circ j_X=\mathrm{id}_X$. 

We claim that the set $U=(\Wallman_{\!\beta\w} X)\setminus f^{-1}(L)$ is open in $\Wallman_{\!\beta\w}^\star X$.
First observe that $j_X^{-1}(U)=X$ is open in $X$. Next, take any countable set $C\subset X$. Observe that for every $i\in\{0,1\}$ the closure of the set $C\cap\sigma_i$ in $\Sigma_i$ is compact and metrizable. Then the closure $\overline{C}=\overline{C\cap\sigma_0}\cup\overline{C\cap\sigma_1}$ is compact and metrizable, too. Since the space $L$ is countably compact, the intersection $L\cap\bar C$ is compact, being a countably compact metrizable space. The equality $f\circ j_X=\mathrm{id}_X$ implies $f(\overline{j_X(C)})\subset \overline{f\circ j_X(C)}=\overline{C}$. Then 
$$\overline{j_X(C)}\setminus U=\overline{j_X(C)}\cap f^{-1}(L)=\overline{j_X(C)}\cap f^{-1}(L\cap\overline{C})$$ is  closed in $\overline{j_X(C)}$, witnessing that the set 
$U$ is open in $\Wallman_{\!\beta\w}^\star X$. Assuming that the space $\Wallman_{\!\beta\w}^\star X$ is regular at $j_X(x)$, we can find an open neighborhood $W$ of $j_X(x)$ in $\Wallman_{\!\beta\w}^\star X$ whose closure $\overline{W}$ in $\Wallman_{\!\beta\w}^\star X$ is contained in the open set $U$. Since $j_X:X\to \Wallman_{\!\beta\w}^\star X$ is a topological embedding, the set $V=j_X^{-1}(W)$ is a neighborhood of $x$ in $X$. By the denisty of the space $X$ in the regular space $\Sigma$, the closure $\overline{V}$ of $V$ in $\Sigma$ is a neighborhood of $x$ in $\Sigma$. Since $x\in\bar L$, we can find an ordinal $\alpha\in [\alpha_x,\w_1)$ such that $z_\alpha\in \overline{V}$. Since the set $\sigma_0$ is dense in $X$, we get $z_\alpha\in \overline{V}=\overline{V\cap\sigma_0}$. By \cite[3.10.D]{Eng}, the space $\Sigma_0$ is Fr\'echet-Urysohn. Consequently, there exists a sequence $(x_n)_{n\in\w}$ in $V\cap\sigma_0$ that converges to $z_\alpha$. Fix any ultrafilter $p\in\beta\w\setminus\w$. By the $\w$-boundedness of the space $\Wallman_{\beta\w}^\star X$, the sequence $(j_X(x_n))_{n\in\w}$ has a $p$-limit $\bar x$ in $\Wallman_{\beta\w}^\star X$. The continuity of the map $f$ ensures that $f(\bar x)\in \lim_p(f\circ j_X(x_n))_{n\in\w}=\lim_p(x_n)_{n\in\w}=\{z_\alpha\}\subset L$ and hence $\bar x\notin U$. On the other hand, $\bar x$ belongs to $\overline{W}\subset U$ being a $p$-limit point of the sequence $\{j_X(x_n)\}_{n\in\w}\subset W$. This contradiction shows that $\Wallman_{\beta\w}^\star X$ is not regular at $x$. By analogy we can prove that  $\Wallman_{\beta\w}^\star X$ is  not regular at any point of the set $j_X(\sigma_0)$.
\smallskip

5. Assuming that the identity function $\Wallman_{\!\beta \w}X\to\Wallman_{\!\beta\w}^\star X$ is continuous, we conclude that it is a homeomorphism. Then the regularity of the space $\Wallman_{\!\beta \w}X$ at points of the set of $j_X(X)$ implies the regularity of the space $\Wallman_{\!\beta \w}^\star X$ at points of $j_X(X)$. But this contradicts the statements (3) and (4).
\end{proof}

\section{The Hausdorff and $\overline{\kappa}$-Urysohn $M$-compact reflections of a topological space}\label{s:main-M}

Let $\kappa$ be an infinite cardinal and $M$ be a nonempty subset of $\beta\kappa$.  By \cite[3.2]{GFK}, the class $\HM$ of $M$-compact Hausdorff spaces is productive, closed-hereditary and topological. This implies that the class  $\UM$ of $\overline{\kappa}$-Urysohn $M$-compact spaces is also productive, closed-hereditary and topological. 

By Theorem~\ref{t:Kak}, each topological space $X$ has a unique $\HM$-reflection $(\HM X,i_X)$ and a unique $\UM$-reflection $(\UM X,i_X)$. In this section we show that for a $\overline{\kappa}$-normal $T_1$-space $X$ (containing no long $\kappa$-butterflies), the $\UM$-reflection (and $\HM$-reflection) of $X$ can be identified with the Wallman $M$-compact extension $\WM X$ endowed with the topology $\tau_M^\sharp$ defined as follows.

For a $T_1$-space $X$ let $\tau^\sharp_M$ be the topology on $\WM X=\cl_{[M]}^1 j_X(X)\subset \Wallman X$ consisting of the sets $U\subset \WM X$ such that
\begin{itemize}
\item $j_X^{-1}(U)$ is open in $X$ and
\item $\WM X\setminus U$ is $[M]$-closed in $\WM X$.
\end{itemize}
 Denote by $\WMstar X$ the topological space $(\WM X,\tau^\sharp_M)$.  
The following theorem describes some properties of this topological space.
 

\begin{theorem}\label{t:main-M} For any $T_1$-space $X$ the pair $(\WMstar X,j_X)$ has the following properties.
\begin{enumerate}
\item The identity map $\WMstar X\to\WM X$ is continuous.
\item The identity map $\WM X\to \WMstar X$ is $[M]$-continuous.
\item The map $j_X:X\to\WMstar X$ is a topological embedding.
\item The set $j_X(X)$ is dense in $\WMstar X$.
\item The space $\WMstar X$ is $M$-compact.
\item If the space $X$ is $\overline{\kappa}$-normal, then the space $\WMstar X$ is Hausdorff and $\overline{\kappa}$-Urysohn.
\item If the space $\WM X$ is Hausdorff, then any $M$-compact subspace of $\WM X\setminus j_X(X)$ is closed $\WMstar X$.
\item For any continuous map $f:X\to Y$ to a $\overline{\kappa}$-Urysohn $M$-compact $T_1$-space $Y$ there exists a unique continuous map $\bar f:\WMstar X\to Y$ such that $f=\bar f\circ j_X$.
\item If $X$ contains no long $\kappa$-butterflies, then for any continuous map $f:X\to Y$ to a Hausdorff $M$-compact space $Y$ there exists a unique continuous map $\bar f:\WMstar X\to Y$ such that $f=\bar f\circ j_X$.
\item If $X$ is $\overline{\kappa}$-normal, then $(\WMstar X,j_X)$ is a reflection of $X$ in the class $\UM$ of $\overline{\kappa}$-Urysohn $M$-compact spaces.
\item If $X$ is $\overline{\kappa}$-normal and contains no long $\kappa$-butterflies, then $(\WMstar X,j_X)$ is a reflection of $X$ in the class $\HM$ of Hausdorff $M$-compact spaces.
\end{enumerate}
\end{theorem}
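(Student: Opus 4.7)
\smallskip
My plan is to prove the eleven items more or less in the order stated, because the early structural items propagate into the later ones. The whole argument rests on the following elementary observation: in a Hausdorff (hence $[M]$-Hausdorff) space every closed set is $[M]$-closed, so the original topology of $\WM X$ already consists of sets $U$ whose complement is $[M]$-closed in $\WM X$. This shows that $\tau^\sharp_M$ refines the original topology on $\WM X$, giving item (1). For item (2), fix an ultrafilter $p\in[M]$, a $\kappa$-sequence $x\colon\kappa\to\WM X$ with $\bar x\in\lim_p x$ in $\WM X$, and a $\tau^\sharp_M$-neighborhood $U$ of $\bar x$: the set $\WM X\setminus U$ is $[M]$-closed and misses $\bar x$, so $\{\alpha:x(\alpha)\in\WM X\setminus U\}\notin p$ and hence $x^{-1}(U)\in p$.

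\smallskip
Items (3)--(5) are then short. The map $j_X\colon X\to\WMstar X$ is continuous by the first clause in the definition of $\tau^\sharp_M$, and is an embedding because it is an embedding into $\WM X$ (Proposition~\ref{p:clM} and the original Wallman embedding) and $\tau^\sharp_M$ refines the $\WM X$-topology; this is (3). For (4), if $\emptyset\ne U\in\tau^\sharp_M$ then $\WM X\setminus U$ is a proper $[M]$-closed subset of $\WM X=\cl^1_{[M]}j_X(X)$, so it cannot contain $j_X(X)$, giving density. For (5), use that $\WM X$ is $[M]$-compact (Proposition~\ref{p:clM}), hence $M$-compact since $M\subset[M]$; given a $\kappa$-sequence in $\WMstar X$ and $p\in M$, its $p$-limit $\bar x$ in $\WM X$ is transferred by item (2) to a $p$-limit in $\WMstar X$.

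\smallskip
For (6), $\overline{\kappa}$-normality of $X$ gives Hausdorffness and $\overline{\kappa}$-Urysohnness of $\Wallman_{\!\beta\kappa}X$ (Proposition~\ref{p:WH}); both properties pass to the subspace $\WM X$ and to any finer topology (closures in $\WMstar X$ are contained in closures in $\WM X$), yielding the assertion for $\WMstar X$. For (7), let $K\subset\WM X\setminus j_X(X)$ be $M$-compact in a Hausdorff $\WM X$: by Lemma~\ref{l:[M]} $K$ is $[M]$-compact, and by Lemma~\ref{l:Mk=>Mc} (applied to $[M]$) $K$ is $[M]$-closed in the $[M]$-Hausdorff space $\WM X$. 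Since $K$ is disjoint from $j_X(X)$, $j_X^{-1}(\WMstar X\setminus K)=X$, so $\WMstar X\setminus K\in\tau^\sharp_M$ as required.

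\smallskip
The decisive items are (8) and (9); (10) and (11) are then immediate from (3)--(6), (8), (9). For both (8) and (9), Theorem~\ref{t:ex1} produces a unique $[M]$-continuous extension $\bar f\colon\WM X\to Y$; the content is to show $\bar f\colon\WMstar X\to Y$ is actually $\tau^\sharp_M$-continuous. Given an open $V\subset Y$, the set $Y\setminus V$ is closed in the Hausdorff space $Y$ and hence $[M]$-closed by the elementary observation above; by the preimage proposition of Section~\ref{s:M-continuous} applied to the $[M]$-continuous map $\bar f$, the preimage $\bar f^{-1}(Y\setminus V)$ is $[M]$-closed in $\WM X$, while $j_X^{-1}(\bar f^{-1}(V))=f^{-1}(V)$ is open in $X$ by continuity of $f$; hence $\bar f^{-1}(V)\in\tau^\sharp_M$. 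Uniqueness follows from density of $j_X(X)$ in $\WMstar X$ and Hausdorffness of $Y$. I expect the chief obstacle to be nothing more than bookkeeping around the interaction between the refined topology $\tau^\sharp_M$ and $[M]$-continuity, specifically verifying cleanly in each of (8) and (9) that the hypotheses of Theorem~\ref{t:ex1} are met and that the resulting $[M]$-continuous extension automatically upgrades to $\tau^\sharp_M$-continuity; once this mechanism is in place, (10) and (11) are direct conjunctions of earlier parts with the definitions of the classes $\UM$ and $\HM$.
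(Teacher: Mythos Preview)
Your proposal is correct and follows essentially the same route as the paper: items (1)--(7) are structural and proved exactly as in the paper, and for (8)--(9) you invoke Theorem~\ref{t:ex1} to obtain the $[M]$-continuous extension and then verify $\tau^\sharp_M$-continuity by showing preimages of closed sets are $[M]$-closed (the paper does this computation inline rather than citing the preimage proposition, but it is the same argument). One small wording issue: your ``elementary observation'' that closed sets are $[M]$-closed does not require Hausdorffness, and in item~(1) you apply it to $\WM X$, which need not be Hausdorff in general; since the observation holds in any topological space, the argument is unaffected, but the parenthetical hypothesis should be dropped.
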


\begin{proof} 1. The definition of the topology $\tau^\sharp_M$ on $\WM X$ implies that this topology includes the original topology of $\WM X$, which means that the identity map $\WMstar X\to \WM X$ is continuous.
\smallskip

2. To see that the identity map $\WM X\to \WMstar X$ is $[M]$-continuous, take any ultrafilter $p\in[M]$ and a $\kappa$-sequence $x:\kappa\to \WM X$ that $p$-converges to a point $u\in\WM X$ in the space $\WM x$. We need to prove that the $\kappa$-sequence $x$ is $p$-convergent to $u$ in the space $\WMstar X$. Assuming that this is not true, we can find an open neighborhood $U$ of $u$ in the space $\WMstar X$ such that the set $\{\alpha\in\kappa:x(\alpha)\in U\}$ does not belong to the ultrafilter $p$. Then the set $P=\{\alpha\in\kappa:x(\alpha)\notin U\}$ belongs to $p$. Take any $\kappa$-sequence $y:\kappa\to \WM X\setminus U$ such that $y{\restriction}P=x{\restriction}P$. It can be shown that the $\kappa$-sequence $y$ is $p$-convergent to $u$. By the $[M]$-closedness of the set $\WM X\setminus U$, we have $u\in \WM X\setminus U$. But this contradicts the choice of the neighborhood $U$ of $u$.
\smallskip

3. The definition of the topology $\tau^\sharp_M$ guarantees that the map $j_X:X\to\WMstar X$ is continuous. Taking into account that the map $j_X:X\to\WM X$ is a topological embedding and the identity map $\WMstar X\to \WM X$ is continuous, we conclude that the continuous map $j_X:X\to\WMstar X$ is a topological embedding.
\smallskip

4.  To derive a contradiction, assume that set $j_X(X)$ is not dense in $\WMstar X$ and find a nonempty open set $U\subset \WMstar X$ such that $U\cap j_X(X)=\emptyset$. Fix any ultrafilter $u\in U$. Since $u\in \WM X=\cl_{[M]}^1j_X(X)$, there exists an ultrafilter $p\in[M]$ and a $\kappa$-sequence $x:\kappa\to j_X(X)$ such that $u\in\lim_p x$.  Observe that $x(\kappa)\subset j_X(X)\subset \WM X\setminus U$. By the definition of the topology $\tau^\sharp_M$, the set $\WM X\setminus U$ is $[M]$-closed in $\WM X$. Then $u\in \lim_p x\subset \WM X\setminus U$, which contradicts the choice of $u$. This contradiction witnesses that the set $j_X(X)$ in dense in $\WMstar X$.
\smallskip

5. By Proposition~\ref{p:clM}, the space $\WM X$ is $M$-compact. By Lemma~\ref{l:MMim}, the space $\WMstar X$ is $M$-compact being the image of the $M$-compact space $\WM X$ under the $M$-continuous map $\WM X\to\WMstar X$.
\smallskip

6. If the space $X$ is $\overline{\kappa}$-normal, then by Proposition~\ref{p:WH},  the space $\WM X$ is Hausdorff and $\overline{\kappa}$-Urysohn. Since the identity map $\WMstar X\to \WM X$ is continuous, the space $\WMstar X$ is $\overline{\kappa}$-Urysohn, too.
\smallskip

7. Assume that the space $\WM X$ is Hausdorff and $K\subset \WM X\setminus j_X(X)$ is an $M$-compact subspace of $\WM X$. By Lemma~\ref{l:[M]}, the $M$-compact space $K$ is $[M]$-compact and by Lemma~\ref{l:Mk=>Mc}, the set $K$ is $[M]$-closed in $\WM X$. Since $K\cap j_X(X)=\emptyset$, the set $K$ is closed in $\WMstar X$ by the definition of the topology $\tau^\sharp_M$.

\smallskip
8. Let $f:X\to Y$ be a continuous map of $X$ to a  $\overline{\kappa}$-Urysohn $M$-compact $T_1$-space $Y$. By Theorem~\ref{t:ex1}, there exists a unique $[M]$-continuous map $\bar f:\WM X\to Y$ such that $f=\bar f\circ j_X$. We claim that the map $\bar f:\WMstar X\to Y$ is continuous. Given any open set $U\subset Y$, we need to show that its preimage $\bar f^{-1}(U)$ belongs to the topology $\tau^\sharp_M$.
This will follow as soon as we check two conditions:
\begin{itemize}
\item $j^{-1}_X(\bar f^{-1}(U))$ is open in $X$ and
\item $\WM X\setminus \bar f^{-1}(U)$ is $[M]$-closed in $\WM X$.
\end{itemize}
The set $j_X^{-1}(\bar f^{-1}(U))=(\bar f\circ j_X)^{-1}(U)=f^{-1}(U)$ is open in $X$ by the continuity of the map $f$. To show that the set $\WM X\setminus\bar f^{-1}(U)$ is $[M]$-closed in $\WM X$, fix any ultrafilter $p\in[M]$ and $\kappa$-sequence $x:\kappa\to \WM X\setminus\bar f^{-1}(U)$. We need to prove that $\lim_p x\subset \WM X\setminus \bar f^{-1}(U)$. Take any point $\bar x\in\lim_p x$. The $[M]$-continuity of the map $\bar f$ and the closedness of the set $Y\setminus U\supset \bar f\circ x(\kappa)$ ensure that $\bar f(\bar x)\in\lim_p(\bar f\circ x)\subset Y\setminus U$ and hence $\bar x\in \bar f^{-1})(Y\setminus U)=\WM X\setminus \bar f^{-1}(U)$. Therefore, the set $\bar f^{-1}(U)$ belongs to the topology $\tau^\sharp_M$ and hence the function $\bar f:\WMstar X\to Y$ is continuous.

The uniqueness of $\bar f$ follows from the density of $j_X(X)$ in $\WMstar X$.
\smallskip

9. Assume that $X$ contains no $\kappa$-butterflies and let $f:X\to Y$ be any continuous map to a Hausdorff $M$-compact space $Y$. By Theorem~\ref{t:ex1}, there exists a unique $[M]$-continuous map $\bar f:\WM X\to Y$ such that $f=\bar f\circ j_X$. Repeating the argument from the proof of the preceding statement, we can verify that the map $\bar f:\WMstar X\to Y$ is continuous. The uniqueness of $\bar f$ follows from the density of  $j_X(X)$ in $\WMstar X$.
\smallskip

10. If the $T_1$-space $X$ is $\kappa$-normal, then by Proposition~\ref{p:WH} the space $\Wk X$ is $\kappa$-Urysohn and so is the space $\WMstar X$. Now the  statement (8) implies that $(\WMstar X,j_X)$ is a reflection of $X$ in the class $\UM$ of $\overline{\kappa}$-Urysohn $M$-compact spaces. 
\smallskip

11. If the $T_1$-space $X$ is $\kappa$-normal and contains not $\kappa$-butterflies then by Proposition~\ref{p:WH} the space $\Wk X$ is Hausdorff and so is the space $\WMstar X$. Now the statement (9) implies that $(\WMstar X,j_X)$ is a reflection of $X$ in the class $\HM$ of Hausdorff $M$-compact spaces. 
\end{proof}

Observe that for any separable normal $T_1$-space $X$ its Hausdorff $\w$-bounded reflexion $\mathsf H_\w X$ coincides with the Wallman compactification $\Wallman X=\beta X$ of $X$.  So, the structure of  $\mathsf H_\w X$ is well-understood. On the other hand, even for a countable discrete space $X$ the structure of its $\mathsf H_{\!\beta\w}$- and $\mathsf U_{\!\beta\w}$-reflections $\mathsf H_{\!\beta\w}X=\mathsf U_{\!\beta\w} X=\Wallman_{\!\beta\w}^\sharp X$ is rather mysterious.

\begin{proposition} If an $\overline{\w}$-normal $T_1$-space $X$ is not countably compact, then its $\mathsf H_{\beta\w}$-reflection $\mathsf H_{\beta\w} X$ and $\mathsf U_{\!\beta\w}$-reflection $\mathsf U_{\!\beta\w} X$ are not $\overline{\w}$-regular and not $\omega$-bounded. Consequently, neither $\mathsf H_{\beta\w} X$ nor $\mathsf U_{\!\beta\w} X$ is homeomorphic to $\Wallman_{\!\beta\w} X$ or $\Wallman_{\!\beta\w}^\star X=\mathsf H_\kappa X$.
\end{proposition}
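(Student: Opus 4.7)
The plan is to assume that $\mathsf H_{\beta\w} X$ is $\w$-bounded and derive a contradiction from the failure of countable compactness. Two preliminary reductions simplify the argument. Since both $\mathsf H_{\beta\w} X$ and $\mathsf U_{\!\beta\w}X$ are Hausdorff $\beta\w$-compact $T_1$-spaces by construction, Theorem~\ref{t:kb=kr+bk} gives the equivalence of ``$\overline{\w}$-regular'' and ``$\w$-bounded'' for them, so the first assertion of the proposition collapses to the single claim that neither reflection is $\w$-bounded. Once this is proved, the ``consequently'' clause is automatic: $\Wallman_{\!\beta\w}X$ is $\w$-bounded by Proposition~\ref{p:Wbk} and $\Wallman^\star_{\!\beta\w}X$ by Theorem~\ref{t:main-k}(5), so any homeomorphism with $\mathsf H_{\beta\w} X$ (or $\mathsf U_{\!\beta\w}X$) would force it to be $\w$-bounded.

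Supposing for contradiction that $\mathsf H_{\beta\w} X$ is $\w$-bounded, we have $\mathsf H_{\beta\w} X \in \mathsf H_\w$. Since every $\w$-bounded Hausdorff space is $\beta\w$-compact (Theorem~\ref{t:kb=kr+bk}), the inclusion $\mathsf H_\w \subset \mathsf H_{\beta\w}$ holds. Applying the universal properties of both reflections to the two canonical maps $X \to \mathsf H_\w X$ and $X \to \mathsf H_{\beta\w} X$ gives mutually inverse continuous maps between $\mathsf H_\w X$ and $\mathsf H_{\beta\w} X$ compatible with the canonical maps; uniqueness is forced by the density of the image of $X$ together with Hausdorffness of the targets. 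Thus $\mathsf H_{\beta\w} X \cong \mathsf H_\w X$, and by Theorem~\ref{t:main-k}(8) the latter equals $\Wallman^\star_{\!\beta\w}X$ for $\overline{\w}$-normal $X$. The contradiction will therefore come from showing that the universal property of $\mathsf H_{\beta\w}$ fails for $\Wallman^\star_{\!\beta\w}X$ when $X$ is not countably compact.

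To exhibit this failure, I would use the closed discrete countable subset $D = \{x_n : n \in \w\} \subset X$ provided by the non-countable compactness. A Urysohn-type argument based on $\overline{\w}$-normality — exploiting that subsets of $D$ lie in $\overline{D} = D$ of density $\le \w$ — should produce a continuous map $f : X \to Y$ with $Y$ compact Hausdorff such that $f(x_n)$ lands on a $\beta\w$-copy inside $Y$, so that $(f(x_n))_{n\in\w}$ has $2^{2^{\w}}$ distinct $p$-limits in $Y$. The putative reflection property of $\Wallman^\star_{\!\beta\w}X$ would force a continuous extension $\bar f$ separating all these $p$-limits inside the compact closure of $\{j_X(x_n)\}$ in $\Wallman^\star_{\!\beta\w}X$, and an analysis of the topology $\tau^\star$ (compared with $\tau^\sharp_{\beta\w}$ from Theorem~\ref{t:main-M}) should show that this separation fails. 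The main obstacle lies in this final verification. A cleaner alternative, available when $X$ contains no long $\w$-butterflies, is to use Theorem~\ref{t:main-M}(11) to identify $\mathsf H_{\beta\w} X = \Wallman^\sharp_{\!\beta\w} X$ directly: since $\Wallman^\star_{\!\beta\w}X$ and $\Wallman^\sharp_{\!\beta\w}X$ share the underlying set $\Wk X$, one can compare the topologies $\tau^\star$ and $\tau^\sharp_{\beta\w}$ and show that $\tau^\sharp_{\beta\w}$ is strictly finer when $X$ is not countably compact, so $\Wallman^\sharp_{\!\beta\w}X$ cannot be $\w$-bounded.
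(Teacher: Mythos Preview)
Your reductions are sound: Theorem~\ref{t:kb=kr+bk} collapses the first assertion to non-$\w$-boundedness, and if $\mathsf H_{\beta\w}X$ were $\w$-bounded the reflection calculus would indeed identify it with $\mathsf H_\w X=\Wallman^\star_{\!\beta\w}X$. The gap is exactly where you locate ``the main obstacle'', and neither of your two sketches can be completed as stated. The missing ingredient is Kunen's theorem that $\w^*$ contains weak $P$-points. The closed discrete set $\{x_n\}\subset X$ yields a closed embedding $\bar e:\beta\w\to\Wallman_{\!\beta\w}X$; for a weak $P$-point $p\in\w^*$ the set $C=\bar e(\w^*\setminus\{p\})$ is $\w$-bounded, hence $\beta\w$-compact, hence (via Theorem~\ref{t:main-M}(7)) closed in $\Wallman^\sharp_{\!\beta\w}X$. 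The paper does not pass through $\Wallman^\star_{\!\beta\w}X$ at all: it takes the universal map $f:\mathsf H_{\beta\w}X\to\Wallman^\sharp_{\!\beta\w}X$, observes that the $p$-limit $\bar p$ of $(i_X\circ e(n))_n$ lies outside $f^{-1}(C)$, and shows that $\overline\w$-regularity would separate $\bar p$ from $F=f^{-1}(C)\cap\overline{i_X(e(\w))}$ by an open $V$; any free $q\ne p$ concentrated on $\{n:i_X\circ e(n)\in V\}$ then has its $q$-limit $\bar q\in\overline V\setminus F$, forcing $f(\bar q)\in\bar e(\w^*)\setminus C=\{\bar e(p)\}$, while continuity gives $f(\bar q)=\bar e(q)\ne\bar e(p)$.

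Your cardinality heuristic cannot substitute for this: the compact closure of $\{j_X(x_n)\}$ in $\Wallman^\star_{\!\beta\w}X$ is exactly $\bar e(\beta\w)\cong\beta\w$, which already has $2^{2^\w}$ points, so there is no size obstruction to separating all the $p$-limits. Your alternative --- showing $\tau^\sharp_{\beta\w}\supsetneq\tau^\star$ --- unwinds to producing a $\beta\w$-closed subset of $\Wallman_{\!\beta\w}X\setminus j_X(X)$ whose trace on $\bar e(\beta\w)$ is not closed in $\beta\w$, i.e., a countably compact non-closed subset of $\w^*$. That is precisely what a weak $P$-point provides, and it is a nontrivial fact about $\beta\w$ that your argument must import explicitly.
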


\begin{proof} By Theorem~\ref{t:main-M}(5,6), the Wallman $\beta\w$-compact extension $\Wallman_{\!\beta\w}^\sharp X$ of the $\overline{\w}$-normal $T_1$-space $X$ is $\beta\w$-compact and $\overline{\w}$-Urysohn. By the definition of the $\mathsf H_{\beta\w}$-reflection $(\mathsf H_{\beta\w}X,i_X)$ of $X$, there exists a continuous map $f:\mathsf H_{\beta\w}X\to \Wallman_{\!\beta\w}^\sharp X$ such that $f\circ i_X=j_X$.

The normal $T_1$-space $X$ is not countably compact and hence it admits a closed topological embedding $e:\w\to X$ of the countable discrete space $\w$. The map $e:\w\to X$ has a continuous extension $\bar e:\Wallman \w\to\Wallman_{\!\beta\w} X$, which is a closed topological embedding of $\Wallman \w=\beta\w$ into the Wallman $\beta\w$-extension $\Wallman_{\!\beta\w} X$ of $X$.
By a famous result of Kunen \cite{Kunen} (see also \cite[4.3.4]{vM}), the space $\w^*=\Wallman \w\setminus \w$ contains a point $p$ such that the complement $C=\w^*\setminus\{p\}$ is $\w$-bounded (such points $p$ are called weak $P$-points). Then the space $C=\bar e(\w^*\setminus\{p\})$ is $\w$-bounded and by  Theorem~\ref{t:main-M}(6), the set $C$ is closed in $\Wallman_{\!\beta\w}^\sharp X$. Then its preimage $f^{-1}(C)$ is a closed subset of $\mathsf H_{\beta\w}X$ and so is the intersection $F=f^{-1}(C)\cap \overline{i_X(e(\w))}$.

By the $\beta\w$-compactness of the space $\mathsf H_{\beta\w} X$ the $\w$-sequence $i_X\circ e:\w\to i_X(X)\subset \mathsf H_{\beta\w} X$ has a unique $p$-limit point $\bar p\in\lim_p(i_X\circ e)\subset\overline{i_X\circ e(\w)}$. The continuity of the map $f$ ensures that $f(\bar p)\in\lim_p (f\circ i_X\circ  e)=\lim_p(j_X\circ e)=\{\bar e(p)\}\notin C$ and hence $\bar p\notin F$. Assuming that the space $\mathsf H_{\beta\w} X$ is $\overline{\omega}$-regular, we can find an open neighborhood $V$ of $\bar p$ in $\mathsf H_{\beta\w} X$ such that $\overline{V}\cap F=\emptyset$. Since $\bar p\in V\cap\lim_p(i_X\circ e)$, the set $P=\{n\in\w:i_X\circ e(n)\in V\}$ belongs to the ultrafilter $p$ and hence is infinite. Chose any  free ultrafilter $q\in \beta\w$ such that $P\in q$ but $q\ne p$. By the $\beta\w$-compactness of $\mathsf H_{\beta\w} X$, the $\w$-sequence $i_X\circ e$ has a unique $q$-limit point  $\bar q$. It follows that $\bar q\in\lim_q (i_X\circ e)\subset\overline{i_X\circ e(P)}\subset\overline{V}$ and hence $\bar q\notin F$, $f(\bar q)\notin C$, and
$f(\bar q)\in \bar e(\w^*)\setminus C=\{\bar e(p)\}$. On the other hand, the continuity of the map $f$ guarantees that $f(\bar q)\in \lim_q(f\circ i_X\circ e)=\lim_q(j_X\circ e)=\bar e(q)\ne \bar e(p)$, by the injectivity of $\bar e$.
Therefore, the space $\mathsf H_{\beta\w} X$ is not $\overline{\w}$-regular. By Proposition~\ref{p:cn}, the space  $\mathsf H_{\beta\w} X$ is not $\w$-bounded, and it is not homeomorphic to the $\w$-bounded spaces $\Wallman_{\!\beta\w} X$ or $\Wallman_{\!\beta\w}X=\mathsf H_\w X$.

By analogy we can prove that the reflection $(\mathsf U_{\!\beta\w} X,i_X)$ of $X$ in the class $\mathsf U_{\!\beta\w}$ of $\overline{\kappa}$-Urysohn $\beta\w$-compact spaces is not $\overline{\w}$-regular, not $\w$-bounded, and not  homeomorphic to the $\w$-bounded spaces $\Wallman_{\!\beta\w} X$ or $\Wallman_{\!\beta\w}X=\mathsf H_\w X$.
\end{proof}

\begin{problem} Let $X$ be a (metrizable) $\overline{\w}$-normal $T_1$-space. Is the pair $(\Wallman^\sharp_{\!\beta\w} X,j_X)$ a reflection of $X$ in the class of Hausdorff $\beta\w$-compact spaces?
\end{problem}

\section*{Acknowledgements}
The  authors would like to express their sincere thanks to Serhii Bardyla for motivating discussions leading to writing this paper.

\end{document}